
\documentclass[11pt]{amsart}
\usepackage{amsmath,amssymb,amscd,verbatim}
\usepackage{amssymb}
\usepackage{amsthm}
\usepackage{amsmath} \usepackage{latexsym}
\usepackage{color}%

\hfuzz 7.5pt \marginparwidth 0pt \setlength{\topmargin}{0.5in}
\setlength{\oddsidemargin}{0pt} \setlength{\evensidemargin}{0pt}
\setlength{\textwidth}{6.5in} \setlength{\textheight}{8.5in}
\flushbottom

\newcommand{\mc}{\mathcal}
\newcommand{\mb}{\mathbb}
\newcommand{\sub}{\subseteq}
\newcommand{\subneq}{\varsubsetneq}
\newcommand{\nsub}{\nsubseteq}

\newcommand{\lra}{\Leftrightarrow}

\newcommand{\ra}{\Rightarrow}

\newcommand{\sm}{\setminus}
\newcommand{\al}{\alpha}

\DeclareMathOperator{\rad}{rad}
\newcommand{\op}{\operatorname}

\newcommand{\tmax}{t\op{-Max}}
\newcommand{\wmax}{w\op{-Max}}

\newcommand{\starspec}{\ast\op{-Spec}}
\newcommand{\starmax}{\ast\op{-Max}}
\newcommand{\Spec}{\op{Spec}}
\newcommand{\tspec}{t\op{-Spec}}
\newcommand{\wspec}{w\op{-Spec}}

\newcommand{\Na}{R\langle X \rangle}

\newcommand{\tinv}{\op{Inv_t}}

\newtheorem{thm}{Theorem}[section]
\newtheorem{prop}[thm]{Proposition}
\newtheorem{lemma}[thm]{Lemma}
\newtheorem{cor}[thm]{Corollary}
\theoremstyle{definition}

\newtheorem{ex}[thm]{Example}
 \newtheorem{rem}[thm]{Remark}

\begin{document}
    \title{UNIQUE REPRESENTATION DOMAINS, II}

\author{Said El Baghdadi}

\address{Department of Mathematics, Facult\'{e} des Sciences et
Techniques, P.O. Box 523, Beni Mellal, \,\,\,Morocco}

\email{baghdadi@fstbm.ac.ma}

\author{Stefania Gabelli}

    \address{Dipartimento di Matematica, Universit\`{a} degli Studi Roma
Tre, Largo S.  L.  Murialdo, 1, 00146 Roma, Italy}

\email{gabelli@mat.uniroma3.it}

\author{Muhammad Zafrullah}

    \address{57 Colgate Street, Pocatello, ID 83201, USA}

\email{zafrullah@lohar.com}

\maketitle


\section*{Introduction}
Let $R$ be an integral domain. Two elements $x,y\in R$ are said to
have a greatest common divisor (for short a
GCD) if there exists an element $d\in R$ such that $d\mid x,y$ (or $dR\supseteq xR,yR$) and for all $
r\in R$ dividing $x,y$ (or $rR\supseteq xR,yR$) we have $r\mid d$ (or $rR\supseteq dR$).  
A GCD of two elements $x, y$, if it exists, is unique up to a unit and will be denoted by
 GCD($x,y$).
 Following Kaplansky \cite[page 32]{K},
in part, we say that $R$ is a GCD domain if  any two nonzero
elements of $R$  have a GCD. For a more detailed treatment of
greatest common divisors the reader is referred to \cite[page
32]{K}. 

 Of course if $d$
is a GCD of $x,y$ then $x=x_{1}d$ and $y=y_{1}d$ where $\operatorname{GCD}
(x_{1},y_{1})=1$. Here $\operatorname{GCD}(x_{1},y_{1})=1$ signifies
the fact that every common factor of $x_{1},y_{1}$ is a unit (i.e.
if $tR\supseteq x_{1}R,$ $y_{1}R$ then $t$ is a unit). Now a PID $R$
is slightly more refined than a GCD domain in that for every pair of
nonzero ideals $aR,bR$, we have a
unique ideal $dR$ with $aR+bR=dR$ where $d$ is a GCD of $a,b;$ $
a=a_{1}d,b=b_{1}d$ and $a_{1}R+b_{1}R=R$. So $dR$ can be regarded as the GCD of $aR$, $bR$. Note that in our PID
example the principal ideals
 $aR,bR$ are invertible  and that a Pr\"{u}fer domain is a domain in which every nonzero
finitely generated ideal is invertible. If we regard, for every pair
of invertible ideals $A$ and $B$ of a Pr\"{u}fer domain $R$, the
invertible ideal $C=A+B$ as the GCD of $A$ and $B$ we
find that $C\supseteq A,B$. Hence, $R\supseteq
AC^{-1}=A_{1},BC^{-1}=B_{1}$ and so $A=A_{1}C,B=B_{1}C$ where
$A_{1}+B_{1}=R$. Thus, in a Pr\"{u}fer domain each pair of
invertible ideals has the GCD of sorts. Now in GCD domains various
types of unique factorization have been studied, see for instance
Dan Anderson's recent survey \cite{A}. The aim of this note is to
bring to light similar unique
factorizations of those ideals that behave like invertible ideals of Pr\"{u}fer domains.

It was shown in \cite{ZURD}
that in a GCD domain $R$ a principal ideal $xR$ has finitely many minimal
primes if and only if $xR$ can be uniquely expressed as a product
$xR=(x_{1}R)(x_{2}R)\dots (x_{n}R)$ where each of $x_{i}R$ \ has a
unique minimal prime and $x_{i}$ are coprime in pairs. A GCD domain
in which every proper principal ideal has finitely many minimal
primes was called a unique representation domain (URD) and a nonzero
principal ideal with a unique minimal prime ideal was called a
packet in \cite{ZURD}. Following the terminology of \cite{ZURD} an
invertible integral ideal $I$ in a Pr\"{u}fer domain will be called
a packet if $I$ has a unique minimal prime and a Pr\"ufer domain $R$ a URD if every
proper invertible ideal of $R$ is uniquely expressible as a product of mutually comaximal packets.
One aim of this note is
to show that if $X
$ is an invertible ideal in a Pr\"{u}fer domain such that the set $\operatorname{Min}
(X)$ of minimal primes of $X$ is finite then $X$ is expressible (uniquely)
as a product of mutually comaximal packets. This may be used to describe
the factorization of invertible ideals of Pr\"{u}fer domains in which every
invertible ideal has at most a finite number of minimal primes. This class
includes the Pr\"{u}fer domains of finite character and the so called
generalized Dedekind domains. Our immediate plan is to give a more general definition of URD's,
start with a notion that
generalizes both Pr\"{u}fer and GCD domains, prove analogues of results on
URD's for this generalization and show that results on GCD and Pr\"{u}fer
domains follow from this. The domains that generalize both GCD and
 Pr\"{u}fer domains are called
Pr\"{u}fer $v$-multiplication domains (PVMD's) by
some and $t$-Pr\"{u}fer or pseudo Pr\"ufer by others. Once that is done we
shall look into generalizations of this kind of unique factorization of
ideals. This will make the presentation a bit repetitive but in the presence
of motivation the reader will find the paper more readable.

Before we indicate our plan it seems pertinent to provide a
working introduction to the notions that we shall use in this paper.

Let $R$ be an integral domain with quotient field $K$ and $F(R)$ the
set of nonzero fractional ideals of $R$.  A star operation $\ast $ on $R$ is a map
$F(R)\rightarrow F(R)$, $I\mapsto I^{\ast }$, such that the following
conditions hold for each $0\not=a\in K$ and for each $I,J\in F(R)$:

\begin{itemize}
\item[(i)] $R^\ast = R$ and $(aI)^\ast = aI^\ast$;

\item[(ii)] $I \subseteq I^\ast$, and $I \subseteq J \Rightarrow I^\ast
\subseteq J^\ast$;

\item[(iii)] $I^{\ast\ast} = I^\ast$.
\end{itemize}

For standard material about star
operations, see Sections 32 and 34 of \cite{Gi}.
For our purposes we note the following.

Given two ideals $I,J\in F(R)$ we have $(IJ)^{\ast }=(I^{\ast }J)^{\ast
}=(I^{\ast }J^{\ast })^{\ast }$ ($\ast $-multiplication) and we have $%
(I+J)^{\ast }=(I^{\ast }+J)^{\ast }=(I^{\ast }+J^{\ast })^{\ast }$ ($\ast $%
-addition).

A nonzero fractional ideal $I$ is a $\ast $-ideal if $I=I^{\ast}$ and it is
$\ast$-finite (or of finite type) if $I^\ast=J^{\ast}$ for some finitely
generated ideal $J\in F(R)$. A star operation $\ast$ is of finite type if $
I^{\ast }=\bigcup \{J^{\ast }\,:\,J\subseteq I$ and $J$ is fini\-te\-ly
ge\-ne\-ra\-ted\}, for each $I\in F(R)$. To each star operation $\ast $, we
can associate a star operation of finite type $\ast _{f}$, defined by $
I^{\ast _{f}}=\bigcup \{J^{\ast }\,:\,J\subseteq I$ and $J$ is fini\-te\-ly
ge\-ne\-ra\-ted\}, for each $I\in F(R)$. If $I$ is a finitely generated
ideal then $I^{\ast }=I^{\ast _{f}}$.

 Several star operations can be defined on $R$. The trivial example of a
star operation is the identity operation, called the $d$-operation, $I_{d}=I$
for each $I\in F(R)$. Two nontrivial star operations which have been
intensively studied in the literature are the $v$-operation and the $t$
-operation. Recall that the $v$-closure of an ideal $I\in F(R)$ is $
I_{v}=(I^{-1})^{-1}$, where for any $J\in F(R)$ we set $J^{-1}=(R\colon J)=\{x\in K\,:\, xJ
\subseteq R\}$. A $v$-ideal is also called a divisorial ideal. The $t$
-operation is the star operation of finite type associated to $v$. Thus $
I=I_{t}$ if and only if, for every finite set $x_{1},\dots, x_{n}\in I$ we
have $(x_{1},\dots, x_{n})_{t}\subseteq I$. If $\{R_{\alpha }\}$ is a family of overrings of $R$ such that $R\sub R_\al\sub K$ and $R=\cap R_{\alpha }$,
then, for all $I\in F(R)$, the association $I\mapsto I^{\ast }=\cap
IR_{\alpha }$ is a star operation ``induced" by $\{R_{\alpha }\}$.

If $\ast _{1}$ and $\ast _{2}$
are two star operations defined on $R$ we say that $\ast _{2}$ is coarser
than $\ast _{1}$ (notation $\ast _{1}\leq \ast _{2}$) if, for all $I\in F(R)$
we have $I^{\ast _{1}}\subseteq I^{\ast _{2}}$. If $\ast _{1}\leq \ast _{2}$
then for each $I\in F(R)$ we have $(I^{\ast _{1}})^{\ast _{2}}=(I^{\ast
_{2}})^{\ast _{1}}=I^{\ast _{2}}$. The $v$-operation is the coarsest of all
star operations on $R$ and the $t$-operation is coarsest among 
star operations of finite type.

A $\ast $-prime is a prime ideal which is also a $\ast$-ideal and a $\ast $-maximal ideal is a
$\ast$-ideal maximal in the set of proper integral $\ast $-ideals of $R$. We denote by
$\starspec(R)$ (respectively, $\starmax(R)$) the set of $\ast $-prime
(respectively, $\ast $-maximal) ideals of $R$. If $\ast $ is a star
operation of finite type, by Zorn's lemma each $\ast $-ideal is contained in
a $\ast $-maximal ideal, which is prime. In this case, $R=\bigcap _{M\in \starmax(R)}R_{M}$. We say that $R$
has $\ast $-finite character if each nonzero element of $R$ is contained in
at most finitely many $\ast $-maximal ideals.

The star operation induced by $\{R_{M}\}_{M\in \tmax(R)}$, denoted by $w$, was introduced by Wang and McCasland in  \cite{WM}. The star operation induced by 
$\{R_{M}\}_{M\in \starmax(R)}$, denoted by $\ast _{w}$, was studied \ by Anderson
and Cook in \cite{AC}, where it was shown that $\ast _{w}\leq \ast$ and that 
$\ast _{w}$ is of finite type. 

When $\ast $ is of finite type, a minimal prime of a $\ast$-ideal is
a $\ast$-prime. In particular, any minimal prime over a nonzero
principal ideal is a $\ast$-prime for any star operation $\ast$ of finite
type  \cite{HH}.  An height-one prime ideal, being minimal over a
principal ideal, is a $t$-ideal. We say that $R$ has $t$-dimension
one if each $t$-prime ideal has height one.

For any star operation $\ast$, the set of fractional $\ast$-ideals is a
semigroup under the $\ast$-multiplication $(I, J)\mapsto (IJ)^\ast$, with
unity $R$. An ideal $I\in F(R)$ is called $\ast$-invertible if $I^\ast$ is
invertible with respect to the $\ast$-multiplication, i.e., 
$(II^{-1})^\ast=R$. If $\ast$ is a star operation of finite type, then a
$\ast$-invertible ideal is $\ast$-finite. 
For concepts related to
star invertibility and the $w$-operation the readers may consult \cite{Zputting} and if
need arises references there. For our purposes we note that for a
finite type star operation $\ast,$ $R$ is a Pr\"ufer $\ast$-multiplication domain (for short a P$\ast $MD) if every nonzero finitely generated ideal of $R$ is $\ast $-invertible. A
P$t$MD is denoted by PVMD for historical reasons.

Finally two $\ast $-ideals $I,J$ are $\ast $-comaximal if $(I+J)^{\ast }=R$.
If $\ast $ is of finite type, $I,J$ are $\ast $-comaximal if and only if $I+J $
is not contained in any $\ast $-maximal ideal. We note that the
following statement can be easily proved for $\ast $ of finite character.
 If $\{I_{\al}\}$ is a finite family of pairwise $\ast $-comaximal ideals, then 
$(\bigcap I_{\al})^{\ast }=(\prod I_{\al})^{\ast }$. In fact we have
$(\bigcap I_{\al})^{\ast
_{w}}=\bigcap_{M\in \starmax(R)}(\bigcap I_{\al})R_{M}=\bigcap_{M\in \starmax
(R)}(\prod I_{\al})R_{M}=(\prod I_{\al})^{\ast _{w}}$,
where the  second equality holds because at most one $I_{\al}$ survives in $R_{M}$. 
Now the result follows from the
fact that $\ast $ is coarser than $\ast _{w}$.

For a finite type star operation $\ast $, we call a $\ast$-invertible $\ast$-ideal
$I\sub R$ a $\ast$-packet if $I$ has a unique minimal prime ideal and
 we call $R$ a $\ast$-URD if every proper
 $\ast$-invertible $\ast$-ideal of $R$ can be uniquely expressed  as a
$\ast$-product of finitely many mutually $\ast$-comaximal $\ast$-packets. When $\ast=t$, a $\ast$-packet will be called a packet and the $\ast$-URD
will be called a URD.

In the first section we show that if $R$ is a PVMD and $I$ a
$t$-invertible $t$-ideal then $I$ has finitely many minimal primes
if and only if $I$ is expressible as a $t$-product of finitely many
packets, if and only if $I$ is expressible as a (finite) product of
mutually $t$-comaximal packets. We also show that a PVMD $R$ is a
URD if and only if every proper principal ideal of $R$ has finitely
many minimal primes. The results in this section are a translation
of results proved in \cite{ZURD}, establishing yet again the
folklore observation that most of the multiplicative results proved
for GCD domains can be proved for PVMD's.

In Section 2,
for a finite type star operation $\ast $ we prove that a domain $R$ is a $\ast $-URD if and only if $\ast
$-Spec($R$) is treed and every proper principal ideal of $R$ has
finitely many minimal prime ideals. Thus Section 2 serves to isolate
the main conditions that allowed unique representation in PVMD's. On
the other hand the generality of $\ast $ lets us prove, in one go,
results about URD's and about $d$-URD's which, incidentally, were
the main topic in \cite{BH}.

Finally, in Section 3, we study methods of making new URD's from
given ones.
We show that if $R$ is a URD and $S$ is a multiplicative set of $R$ then $%
R_{S}$ is a URD. Our main result here is the characterization of
when a polynomial ring is a URD. We close the section by studying
under what conditions we can construct URD's using the $D+XD_{S}[X]$
construction.

\section{From GCD-domains to PVMD's}\label{one}

Let $R$ be a  PVMD. If $X, Y\sub R$ are
$t$-invertible $t$-ideals and $\Delta =(X+Y)_{t}$, then $\Delta $ is
a $t$-invertible $t$-ideal, $\Delta \supseteq X,Y$ and
$X_{1}=(X\Delta ^{-1})_{t}$, $Y_{1}=(Y\Delta ^{-1})_{t}\subseteq R$
are $t$-comaximal $t$-invertible $t$-ideals; thus, as in the case of
Pr\"{u}fer domains, we can say that $\Delta $ is the GCD of $X,Y$.
So we can carry out the plans for PVMD's that we made, in the
introduction, with the class of
 Pr\"{u}fer domains as our model. We intend to prove in this section the
following two basic theorems involving packets. 

\begin{thm}\label{Theorem A} Let $R$ be a PVMD and $X\sub R$  a $t$-invertible $t$-ideal. If $X$ is a $t$-product of a finite number of packets then $X$ can be uniquely
expressed as a $t$-product of a finite number of mutually
$t$-comaximal packets.
\end{thm}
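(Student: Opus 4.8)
The plan is to control everything through the minimal primes of the packets involved, exploiting that in a PVMD every localization $R_M$ at a $t$-maximal ideal $M$ is a valuation domain, so $\tspec(R)$ is treed. First I would record the consequence that the primes contained in a fixed prime form a chain: if $\mathfrak p,\mathfrak q\subseteq\mathfrak r$ then $\mathfrak p,\mathfrak q\subseteq M$ for some $M\in\tmax(R)$ with $M\supseteq\mathfrak r$, and treedness forces $\mathfrak p,\mathfrak q$ comparable. Next I would note that each packet $P$ has a unique minimal prime $\mathfrak p=\sqrt P$, that $P\subseteq M$ iff $\mathfrak p\subseteq M$ for $M\in\tmax(R)$, and hence that two packets $P,P'$ with minimal primes $\mathfrak p,\mathfrak p'$ are $t$-comaximal iff $\mathfrak p,\mathfrak p'$ lie under no common $t$-maximal ideal, equivalently---by treedness---iff $\mathfrak p$ and $\mathfrak p'$ are incomparable. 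This last equivalence I would verify by localizing, since $(P+P')_t\subseteq M$ forces both $\mathfrak p,\mathfrak p'\subseteq M$.

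For existence, write $X=(P_1\cdots P_n)_t$ with packets $P_i$ of minimal prime $\mathfrak p_i$, and put $i\sim j$ when $\mathfrak p_i,\mathfrak p_j$ share a common $t$-maximal ideal (equivalently, are comparable). I would pass to the connected components $C$ of this graph and set $Q_C=(\prod_{i\in C}P_i)_t$, which is a $t$-invertible $t$-ideal as a $t$-product of such. The crux is that each $Q_C$ is a packet: a prime contains $Q_C$ iff it contains some $\mathfrak p_i$ with $i\in C$, so $\Min(Q_C)$ consists of the minimal members of $\{\mathfrak p_i:i\in C\}$, and I would show by induction along a connecting path---using the chain lemma at each step to compare the newly added prime with the current least one---that this finite connected set has a least element. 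Thus $Q_C$ has a unique minimal prime. Distinct components give $t$-comaximal packets (a shared $t$-maximal ideal would link their primes, hence link the components), and $X=(\prod_C Q_C)_t$, which is the desired factorization.

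For uniqueness, suppose $X=(Q_1\cdots Q_k)_t=(Q'_1\cdots Q'_l)_t$ are two factorizations into pairwise $t$-comaximal packets, with minimal primes $\mathfrak q_a$ and $\mathfrak q'_b$. Pairwise $t$-comaximality makes the $\mathfrak q_a$ pairwise incomparable, so by the identity $(\bigcap I_\al)^{\ast}=(\prod I_\al)^{\ast}$ recorded in the Introduction one has $X=\bigcap_a Q_a$ and $\Min(X)=\{\mathfrak q_1,\dots,\mathfrak q_k\}$; likewise $\Min(X)=\{\mathfrak q'_1,\dots,\mathfrak q'_l\}$. Hence $k=l$ and, after reindexing, $\mathfrak q_a=\mathfrak q'_a$. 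I would then localize: for $M\in\tmax(R)$ with $\mathfrak q_a\subseteq M$, treedness and comaximality leave only the $a$-th factor proper, so $XR_M=Q_aR_M=Q'_aR_M$, while for $M\not\supseteq\mathfrak q_a$ both localize to $R_M$; since $I=\bigcap_{M\in\tmax(R)}IR_M$ for a $t$-invertible $t$-ideal $I$, this yields $Q_a=Q'_a$.

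The main obstacle is the existence step, and specifically the claim that each connected component $C$ yields a genuine packet, i.e. that $\{\mathfrak p_i:i\in C\}$ has a unique minimal element. This is exactly where treedness is essential and where a naive ``comparability is transitive'' argument fails, since comparability can break above a common prime; the least-element induction, fed by the chain lemma, is what makes it go through. The uniqueness step is comparatively routine once the minimal-prime bookkeeping and the localization identity from the Introduction are in hand.
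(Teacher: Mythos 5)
Your proof is correct, and it reaches the theorem by a genuinely different route than the paper does. For existence, the paper works two packets at a time: part (3) of Lemma~\ref{lemma1} shows that the $t$-product of two non-$t$-comaximal packets is again a packet, and repeated merging collapses $X=(X_1X_2\dots X_n)_t$ into a $t$-product of mutually $t$-comaximal packets. You instead do the collapse globally, grouping the packets by connected components of the comparability graph on their minimal primes and showing that each component's $t$-product is a packet; the key point, which you correctly isolate, is that in a treed $\tspec(R)$ a finite connected family of $t$-primes has a least element (a minimal element $\mathfrak m$ of the family satisfies $\mathfrak m\subseteq\mathfrak q$ for every $\mathfrak q$ in it, by walking along a connecting path and using at each step that two primes lying under a common prime are comparable). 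This is an order-theoretic repackaging of essentially the same content as parts (1) and (3) of Lemma~\ref{lemma1}. For uniqueness the arguments truly diverge: the paper proves a quasi-Schreier property for $t$-invertible $t$-ideals (Lemma~\ref{lemma6}) and combines it with Lemma~\ref{lemma5} to show that any packet containing $X$ contains exactly one of the $t$-comaximal factors, after which the two factorizations are matched by mutual containment; you match the factors through $\Min(X)$ and then recover each factor locally, using that only one factor survives in $R_M$ when $M\in\tmax(R)$ contains its minimal prime, together with $I=\bigcap_{M\in\tmax(R)}IR_M$ for a $t$-ideal of a PVMD (Kang's $I_w=I_t$, which the paper itself invokes in Lemma~\ref{lemma3}). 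Your localization argument is shorter and more transparent at this point; the paper's route has the side benefit of establishing the quasi-Schreier decomposition of Lemma~\ref{lemma6}, which the authors single out as being of independent interest and which also carries weight in the proof of Theorem~\ref{Theorem B}.
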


\begin{thm}\label{Theorem B}  Let $R$ be a PVMD and $X\subneq R$  a $t$-invertible $t$-ideal. Then $X$ is a $t$-product of finitely many packets if and only if $X$ has finitely many minimal primes.
\end{thm}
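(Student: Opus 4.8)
The forward implication is the easy one. Suppose $X=(P_1\cdots P_k)_t$ with each $P_i$ a packet, and let $Q_i$ denote the unique minimal prime of $P_i$. If $P$ is any prime minimal over $X$, then $P_1\cdots P_k\sub X\sub P$ forces $P_j\sub P$ for some $j$, whence $Q_j\sub P$; since $X\sub P_j\sub Q_j\sub P$ and $P$ is minimal over $X$, we get $P=Q_j$. Thus $\Min(X)\sub\{Q_1,\dots,Q_k\}$ is finite, and this direction needs nothing beyond primeness.

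For the converse, assume $\Min(X)=\{P_1,\dots,P_n\}$ is finite; the plan is to manufacture $n$ packets whose $t$-product is $X$. First I would record that on a PVMD the set $\tspec(R)$ is treed, since each $R_M$ with $M\in\tmax(R)$ is a valuation domain and so has linearly ordered primes. Consequently no $t$-maximal ideal can contain two distinct $P_i$ (they would then be comparable), so $P_i$ and $P_j$ are $t$-comaximal for $i\neq j$. The second ingredient is the representation $X=\bc_{M\in\tmax(R)}XR_M$, valid for every $t$-ideal because $w\le t$ yields $X\sub X_w\sub X_t=X$.

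I would then split $X$ according to which minimal prime a given $t$-maximal ideal sees. For each $i$ set $\Lambda_i=\{M\in\tmax(R):M\supseteq P_i\}$; by the treed structure the $\Lambda_i$ are pairwise disjoint, and every $M$ containing $X$ lies in exactly one of them. Define $X_i:=\bc_{M\in\Lambda_i}XR_M\cap\bc_{M\notin\Lambda_i}R_M$, so that $X_iR_M=XR_M$ for $M\in\Lambda_i$ and $X_iR_M=R_M$ otherwise; using $t=w$ on a PVMD, $X_i$ is a $t$-ideal, and $X_i\supseteq X$ shows it is a nonzero integral ideal. Localizing gives $(X_1\cdots X_n)R_M=XR_M$ at every $t$-maximal $M$, so $(X_1\cdots X_n)_t=X$, while at each $M$ at most one factor is proper, so the $X_i$ are pairwise $t$-comaximal. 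Finally, a $t$-factor of a $t$-invertible $t$-ideal is again $t$-invertible: $\big(\prod_{j\neq i}X_j\,X^{-1}\big)_t$ is a $t$-inverse of $X_i$, so each $X_i$ is $t$-invertible without any separate $t$-finiteness argument.

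The remaining point, and the one I expect to be the delicate step, is to verify that each $X_i$ is genuinely a \emph{packet}, i.e. $\Min(X_i)=\{P_i\}$; this is exactly where the treed structure does the real work. One direction, $X_i\sub P_i$, follows by comparing $X_iR_M\sub P_iR_M$ at every $M$ and intersecting. Conversely, if $Q$ is minimal over $X_i$, then $Q$ lies in some $M\in\Lambda_i$, and inside the valuation domain $R_M$, where $P_iR_M$ is the unique minimal prime over $XR_M=X_iR_M$, the totally ordered primes force $QR_M\supseteq P_iR_M$, hence $Q\supseteq P_i$; minimality then gives $Q=P_i$. Thus each $X_i$ is a packet, proper since $X_i\sub P_i\subneq R$, and $X=(X_1\cdots X_n)_t$ exhibits $X$ as a $t$-product of finitely many pairwise $t$-comaximal packets. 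Uniqueness of the comaximal form is supplied by Theorem~\ref{Theorem A}. The passage between the global ideal $X_i$ and its valuation-theoretic behaviour at the primes over $P_i$ is the crux of the argument.
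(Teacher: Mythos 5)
Your forward direction is correct, and your converse takes a genuinely different route from the paper's: instead of the paper's induction on $|\Min(X)|$, which peels off one packet at a time using $\Delta=(y^{n}R+X)_{t}$ together with Lemmas \ref{lemma2}, \ref{lemma3} and \ref{lemma6}, you try to build all the factors at once as local components $X_i=\bc_{M\in\Lambda_i}XR_M\cap\bc_{M\notin\Lambda_i}R_M$. The problem is the clause ``so that $X_iR_M=XR_M$ for $M\in\Lambda_i$ and $X_iR_M=R_M$ otherwise'': you present the localization behaviour of $X_i$ as if it were part of the definition, but an intersection $\bc_M J_M$ of $R_M$-submodules need not localize back to $J_M$. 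The half $X_iR_M=XR_M$ for $M\in\Lambda_i$ is indeed immediate from $X\sub X_i\sub XR_M$. The other half, $X_iR_{M_0}=R_{M_0}$ for $M_0\notin\Lambda_i$, is equivalent to $X_i\nsub M_0$, that is, to the existence of an element of $R\sm M_0$ lying in $XR_M$ for \emph{every} $M\supseteq P_i$ simultaneously. This is a nontrivial existence statement, and it is exactly where the content of the theorem is concentrated: without it you get neither that $X_i$ is a $w$-ideal (hence a $t$-ideal), nor $(X_1\cdots X_n)_t=X$, nor the pairwise $t$-comaximality, nor $\rad(X_i)=P_i$. Tellingly, your argument never invokes Lemma \ref{lemma2} or any Archimedean-type input, while the paper's proof cannot do without it; and the paper's general decomposition result, Proposition \ref{fact0}, sidesteps the issue by constructing the components from ideal transforms of finitely generated ideals rather than by intersecting localizations.

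The gap is fillable, which is worth recording. Fix $M_0\notin\Lambda_i$ and choose $y\in P_i\sm M_0$ (possible since $P_i\nsub M_0$). By Lemma \ref{lemma2} there is $n$ with $y^{n}\in XR_{P_i}$. For each $M\in\Lambda_i$, $P_iR_M$ is the unique minimal prime of $XR_M$ in the valuation domain $R_M$, and from $y^{n}\in XR_{P_i}=(XR_M)R_{P_i}$ one finds $x\in XR_M$ with $v(y^{n})\ge v(x)-h$ for some $h$ in the convex subgroup of the value group attached to $P_iR_M$; since $v(y)$ exceeds that entire subgroup, $v(y^{n+1})>v(x)$, so $y^{n+1}\in XR_M$. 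The exponent $n+1$ is uniform in $M$, so $y^{n+1}\in X_i\sm M_0$, as required. With this lemma inserted, the rest of your argument (the product formula, comaximality, $t$-invertibility of the factors, and the packet verification) does go through, and uniqueness is indeed supplied by Theorem \ref{Theorem A}.
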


In a GCD (respectively, Pr\"ufer) domain a $t$-invertible $t$-ideal
is principal (respectively, invertible) and the $t$-product is just
the ordinary product. Indeed it may be noted that a domain $R$ can
have all $t$-invertible $t$-ideals as $t$-products  of packets
without $R$ being a PVMD as we shall point out in the course of our
study.

From this point on we shall use the $v$ and $t$ operations freely.
The results of this section are based on the following observations
that, if reference or proof is not indicated, can be traced back to
Mott and Zafrullah \cite{MZ} or to Griffin \cite{Gr}.

\begin{enumerate}
\item[(I)] A nonzero prime ideal $P$ of a PVMD $R$ is essential (i. e., $R_P$ is a valuation domain) if and only if $P$ is a $t$-ideal.

\item[(II)] The set of prime $t$-ideals of a PVMD $R$ is a tree under inclusion, that is any $t$-maximal ideal of $R$ cannot contain two incomparable $t$-primes.

\item[(III)] If $X$ is a $t$-invertible $t$-ideal and $P$ is a prime
$t$-ideal of a PVMD $R$ then $XR_{P}$ is principal. ($XR_{P}$ is an
invertible ideal \cite{BZ} and in a valuation domain
invertible ideals are principal.)

\item[(IV)] If $X=(X_{1}X_{2})_{t}$ then in view of the definition of $t$
-multiplication we shall assume that both $X_{i}$ are $t$-ideals.

\end{enumerate}

 Let us start with some lemmas on packets in a PVMD.

\begin{lemma}\label{lemma1} Let $R$ be a PVMD and let $X, Y\subneq
R$ be $t$-invertible $t$-ideals such that $\Delta=(X+Y)_{t} \neq R$.
Then the following conditions hold:
\begin{itemize}

\item[(1)] If $P$ is a prime ideal minimal over $\Delta $ then $P$
is minimal over $X$ or $Y$.

\item[(2)] If $P$ is a prime ideal minimal over $X$ such that $Y\subseteq P$
then $P$ is minimal over $\Delta$.

\item[(3)] If $X$ and $Y$ are packets then so are $\Delta $ and
$(XY)_{t}$.
\end{itemize}
\end{lemma}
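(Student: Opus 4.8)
The plan is to prove the three items separately, reducing (1) to a localization at a minimal prime $P$ of $\Delta$, where by (I) the ring $R_P$ is a valuation domain and by (III) every $t$-invertible $t$-ideal becomes principal, while (2) and the two packet assertions in (3) should come out of pure containment bookkeeping together with the tree property (II). Throughout I would use that minimal primes of the $t$-ideals $X,Y,\Delta,(XY)_t$ are themselves prime $t$-ideals, and I would lean on the comaximal decomposition recorded just before Theorem~\ref{Theorem A}: with $X_1=(X\Delta^{-1})_t$ and $Y_1=(Y\Delta^{-1})_t$ one has $X=(X_1\Delta)_t$, $Y=(Y_1\Delta)_t$, $X_1,Y_1\sub R$, and $(X_1+Y_1)_t=R$.

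For (1) I would take $P$ minimal over $\Delta$, so $P$ is a prime $t$-ideal sitting in some $t$-maximal ideal. Since $(X_1+Y_1)_t=R$, $t$-comaximality forbids $X_1,Y_1\sub P$ simultaneously; say $X_1\nsub P$ and choose $s\in X_1\sm P$. Then $s\Delta\sub X_1\Delta\sub X$ yields $\Delta R_P=s\Delta R_P\sub XR_P$, while $X_1\sub R$ gives $X\sub\Delta$ and hence $XR_P\sub\Delta R_P$; thus $XR_P=\Delta R_P$. Because this identity makes the primes of $R$ lying between $X$ (equivalently $\Delta$) and $P$ coincide, minimality of $P$ over $\Delta$ transfers to minimality of $P$ over $X$; symmetrically $Y_1\nsub P$ gives $P$ minimal over $Y$.

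Item (2) I expect to be immediate and not even to need localization: if $P$ is minimal over $X$ with $Y\sub P$, then $P$ is a $t$-ideal and $X,Y\sub P$ forces $\Delta=(X+Y)_t\sub P$. Were $P$ not minimal over $\Delta$, there would be a prime $Q$ with $\Delta\sub Q\subneq P$; but $\Delta\supseteq X$, so $X\sub Q\subneq P$, contradicting minimality of $P$ over $X$. For (3), $\Delta$ and $(XY)_t$ are $t$-invertible $t$-ideals, so I only need uniqueness of the minimal prime. Let $P_X,P_Y$ be the unique minimal primes of the packets $X,Y$. Since $\Delta\neq R$, the ideal $X+Y$ lies in some $t$-maximal ideal, so $P_X,P_Y$ lie in a common $t$-maximal ideal and (II) makes them comparable, say $P_X\sub P_Y$. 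Any minimal prime $P$ of $\Delta$ contains $\Delta\supseteq X,Y$, hence $P\supseteq P_X,P_Y$, while (1) forces $P\in\{P_X,P_Y\}$; with $P_X\sub P_Y$ this collapses to $P=P_Y$, so $\Delta$ is a packet. For $(XY)_t$, a minimal prime $P$ contains $XY$, so $X\sub P$ or $Y\sub P$, whence $P\supseteq P_X$ in either case; since $(XY)_t\sub X\sub P_X\sub P$, minimality gives $P=P_X$, so $(XY)_t$ is a packet.

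The one step that demands genuine care is the identity $XR_P=\Delta R_P$ in (1): it is where I must control how the $t$-product $(X_1\Delta)_t$ and the comaximal factor $X_1$ behave after localizing, and the mechanism that makes it work is passing to the valuation domain $R_P$ and using $X_1\nsub P$ to get $X_1R_P=R_P$. Everything else, in particular all of (2) and the comparability arguments in (3), should reduce to tracking inclusions of primes.
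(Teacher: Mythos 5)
Your proof is correct. Parts (2) and (3) are essentially the paper's own arguments: (2) is the same containment chase (yours is slightly more streamlined, contradicting minimality of $P$ over $X$ directly rather than passing through a minimal prime of $X$ inside $Q$), and (3) runs the same reduction to (1) plus comparability of the two minimal primes. Part (1), however, takes a genuinely different route. The paper never localizes there: it observes that $P$ contains a minimal prime $P_1$ of $X$ and a minimal prime $P_2$ of $Y$, invokes the tree property (II) to make $P_1,P_2$ comparable, and notes that the larger of the two contains $\Delta$ and hence equals $P$ by minimality. You instead use the comaximal decomposition $X=(X_1\Delta)_t$, $Y=(Y_1\Delta)_t$, $(X_1+Y_1)_t=R$ and show that whichever of $X_1,Y_1$ escapes $P$ forces $\Delta R_P=XR_P$ (or $=YR_P$), so minimality transfers. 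Both are sound; the trade-off is that the paper's version of (1) uses only that $\tspec(R)$ is treed and so applies verbatim to arbitrary $t$-ideals (this generality is exactly what is exploited later, cf.\ Lemma \ref{min} and Proposition \ref{fact0}), whereas yours leans on $t$-invertibility of $X$, $Y$ and $\Delta$ but dispenses with (II) in that step and anticipates the localization technique the paper itself deploys in Lemma \ref{lemma4} and Theorem \ref{Theorem B}. One small point worth making explicit in your write-up of (1) is that $P$, being minimal over the $t$-ideal $\Delta$, is a $t$-prime, so $(X_1+Y_1)_t=R$ really does forbid $X_1+Y_1\sub P$; you use this but state it only implicitly.
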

\begin{proof} (1). Note that being minimal over $\Delta,$ $P$ is a
$t$-ideal. Clearly $X,Y\subseteq P$. So $P$ contains a prime ideal
$P_{1}$ that is
minimal over $X$ and $P_{2}$ that is minimal over $Y$. By (II), $P_{1}$ and 
$P_{2}$ are comparable being minimal primes themselves. Say $P_{2}$ 
$\supseteq P_{1}$. Then both $X,Y\subseteq P_{2}$. Thus $\Delta
=(X+Y)_{t}\subseteq P_{2}$. But $P$ is minimal over $\Delta$. So,
$P=P_{2}$ and hence $P$ is minimal over $Y$. Similarly if $P_{1}$
had contained $P_{2}$ we would have concluded that $P$ was minimal
over $X$.

(2). Since $P\supseteq X,Y$ we conclude that $P\supseteq
(X+Y)_{t}=\Delta$. So, if $P$ is not minimal over $\Delta $ then
$P$ contains a prime $Q$ that is minimal over $\Delta$. Again as
$Q\supseteq \Delta =(X+Y)_{t},$  $Q$ contains a prime minimal
over $X$. But $Q$ is contained in $P$.

(3). Let $P_{1}\not=P_{2}$ be two minimal primes over $\triangle $.
By (1), $P_{i}$ is minimal over $X$ or over $Y$. Assume that $P_{1}$
is (the unique) minimal prime over $X$, then $P_{2}$ is the minimal
prime over $Y$. But $Y\subseteq P_{1}$, then $P_{2}\subneq P_{1}$,
which contradicts the minimality of $P_{1}$ over $\Delta $. A
similar argument applies if $P_{2}$ is the minimal prime over $X$.
For $(XY)_{t}$, assume that $P_{1}$ and $P_{2}$ are minimal primes
over $(XY)_{t}$, then $P_{i}$ is minimal over $X$ or over $Y$. We
can assume that $P_{1}$ is the minimal prime over $X$. Then $P_{2}$
is the minimal prime over $Y$. On the other hand, since $\triangle
=(X,Y)_{t}\not=R$ is a packet, the unique minimal prime over
$\triangle $ is $P_{1}$ or $P_{2}$ (by (1)), this forces
$P_{1}\subseteq P_{2}$
or $P_{2}\subseteq P_{1}$. In either case minimality of both forces $%
P_{1}=P_{2}$.
\end{proof}

\begin{lemma}\label{lemma2} Let $R$ be a PVMD and let $P$ be a prime ideal minimal over a $t$-invertible $t$-ideal $X$. Then for all $t$-invertible $t$-ideals $Y$ contained
in $P$ there exists a natural number $n$ such that
$XR_{P}\varsupsetneq Y^{n}R_{P}$.
\end{lemma}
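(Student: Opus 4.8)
The plan is to localize at $P$ and reduce everything to a computation in a valuation domain. Since $t$ is of finite type and $P$ is minimal over the $t$-ideal $X$, $P$ is a $t$-prime, so by observation (I) the localization $R_{P}$ is a valuation domain with maximal ideal $PR_{P}$. By observation (III), both $XR_{P}$ and $YR_{P}$ are principal; write $XR_{P}=xR_{P}$ and $YR_{P}=yR_{P}$. Because $P$ is minimal over $X$ we have $X\subseteq P$, hence $xR_{P}\subseteq PR_{P}$ is a proper nonzero ideal, and because $Y\subseteq P$ we likewise have $yR_{P}\subsetneq R_{P}$, so $y$ is a nonunit of $R_{P}$.

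The key step is to identify the radical of $xR_{P}$. Since $P$ is minimal over $X$, the maximal ideal $PR_{P}$ is a minimal prime of $XR_{P}$ in $R_{P}$; as the primes of the valuation domain $R_{P}$ are linearly ordered, no prime strictly below $PR_{P}$ can contain $XR_{P}$, so $PR_{P}$ is the unique prime over $XR_{P}$ and $\sqrt{xR_{P}}=PR_{P}$. In particular $YR_{P}\subseteq PR_{P}=\sqrt{xR_{P}}$, and since $yR_{P}$ is finitely generated there is an integer $m\geq 1$ with $y^{m}R_{P}\subseteq xR_{P}$.

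It remains only to upgrade this containment to a strict one. Multiplying by the nonunit $y$ gives $y^{m+1}R_{P}=y\cdot y^{m}R_{P}\subseteq x(yR_{P})\subsetneq xR_{P}$, the last inclusion being strict precisely because $yR_{P}\subsetneq R_{P}$. Taking $n=m+1$ and recalling that $Y^{n}R_{P}=(yR_{P})^{n}=y^{n}R_{P}$ yields $XR_{P}=xR_{P}\varsupsetneq Y^{n}R_{P}$, as desired. I expect the only point requiring care to be the radical computation: one must combine the minimality of $P$ over $X$ with the linear ordering of the primes of $R_{P}$ (observation (I)) to conclude that $PR_{P}$ is the full radical of $XR_{P}$, since it is exactly this archimedean-type property inside the value group that forces some power of $y$ to be divisible by $x$.
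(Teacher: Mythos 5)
Your proof is correct. It shares the paper's setup --- $P$ is a $t$-prime since it is minimal over the $t$-ideal $X$, so by (I) the ring $R_P$ is a valuation domain and by (III) the ideals $XR_P=xR_P$ and $YR_P=yR_P$ are principal --- but the way you extract the exponent $n$ is genuinely different. The paper argues by contradiction: if $XR_P\subseteq Y^nR_P$ for every $n$, then $XR_P\subseteq\bigcap_n y^nR_P$, which is a prime ideal of the valuation domain properly contained in $PR_P$ (as $y$ is a nonunit), and this contradicts the minimality of $P$ over $X$; it then converts ``not contained for some $n$'' into the strict containment $XR_P\varsupsetneq Y^nR_P$ using the comparability of arbitrary ideals in a valuation domain. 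You instead observe that minimality of $P$ over $X$ forces $\sqrt{xR_P}=PR_P$ (every prime of the local ring $R_P$ lies inside $PR_P$, and none strictly below it contains $xR_P$), so $y^m\in xR_P$ for some $m$, and you obtain strictness by multiplying once more by the nonunit $y$ and cancelling the nonzero element $x$. Your route is slightly more elementary: it needs neither the fact that $\bigcap_n y^nR_P$ is prime nor the total ordering of all ideals of $R_P$ --- only that $YR_P$ is principal, that all primes of the local ring sit under $PR_P$, and cancellation in a domain --- at the small cost of one extra factor of $y$ to manufacture the proper inclusion. Both arguments are equally short, and both hinge on the same underlying point, namely that minimality of $P$ over $X$ prevents $XR_P$ from sinking below every power of $YR_P$.
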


\begin{proof} Note that $P$ is a prime $t$-ideal in a PVMD and so
$R_{P}$ is a valuation domain and $XR_{P},$ $YR_{P}$ are principal.
Now suppose on the contrary that $XR_{P}\subseteq Y^{n}R_{P}$ for
all natural numbers $n$. Then $ XR_{P}\subseteq \bigcap_n
Y^{n}R_{P}$ and $\bigcap_n Y^{n}R_{P}$ is a prime ideal properly
contained in $PR_{P}$ and this contradicts the minimality of $P$
over $X$. Now since in a valuation domain every pair of ideals is
comparable we have the result.
\end{proof}

\begin{lemma}\label{lemma3} Let $R$ be a PVMD and let $X_{1}, X_{2}\subneq R$ be $t$-invertible $t$-ideals. Let
 $W_{i}$ be the set of $t$-maximal ideals containing 
 $X_{i}$, for $i=1, 2$. If $W_{2}\subseteq W_{1}$ and
 $X_{2}$ is not contained in any minimal prime of $X_{1}$,
then $X_{1}\subseteq (X_{2}^{n})_{t}$, for all natural numbers $n$.

Consequently if $X, Y$ are two packets with $
(X+Y)_{t}\neq R$ such that $\rad(X)\varsupsetneq \rad(Y)$ then $(X^{n})_{t}\supseteq Y$, for all natural numbers $n$.
\end{lemma}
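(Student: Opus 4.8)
The plan is to check the containment $X_{1}\sub (X_{2}^{n})_{t}$ locally at the $t$-maximal ideals. Since $X_{1}$ is a $t$-ideal and $w\leq t$, we have $X_{1}=X_{1}^{w}=\bigcap_{M\in \tmax(R)}X_{1}R_{M}$, and likewise $\bigcap_{M\in \tmax(R)}X_{2}^{n}R_{M}=(X_{2}^{n})^{w}\sub (X_{2}^{n})_{t}$. Hence it suffices to prove, for every $M\in \tmax(R)$ and every $n$, that $X_{1}R_{M}\sub X_{2}^{n}R_{M}$. By (III), $R_{M}$ is a valuation domain and $X_{1}R_{M}$, $X_{2}R_{M}$ are principal.

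I would split into cases. If $M\notin W_{1}$ then $X_{1}R_{M}=R_{M}$; since $W_{2}\sub W_{1}$ we also have $M\notin W_{2}$, so $X_{2}^{n}R_{M}=R_{M}$ and the inclusion holds with equality. (This is exactly where $W_{2}\sub W_{1}$ is used.) If $M\in W_{1}\sm W_{2}$, then $X_{2}R_{M}=R_{M}$, so $X_{2}^{n}R_{M}=R_{M}\supseteq X_{1}R_{M}$. The only real case is $M\in W_{1}\cap W_{2}$, in which both $X_{1}R_{M}$ and $X_{2}R_{M}$ are proper.

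In this case, as $X_{1}\sub M$ we may choose a minimal prime $P$ of $X_{1}$ with $P\sub M$, and the hypothesis gives $X_{2}\nsub P$. Passing to $R_{M}$, the prime $PR_{M}$ satisfies $X_{2}R_{M}\nsub PR_{M}$, so by comparability of ideals in the valuation domain $R_{M}$ we get $PR_{M}\sub X_{2}R_{M}=xR_{M}$ with $x\notin PR_{M}$. The main obstacle is to upgrade this single inclusion to $PR_{M}\sub x^{n}R_{M}$ for \emph{all} $n$ at once, and here primeness is decisive: given $p\in PR_{M}$ write $p=xa_{1}$; since $x\notin PR_{M}$ and $PR_{M}$ is prime, $a_{1}\in PR_{M}\sub xR_{M}$, and iterating gives $p\in x^{n}R_{M}$ for every $n$. (This is the inclusion dual to Lemma \ref{lemma2}.) Therefore $X_{1}R_{M}\sub PR_{M}\sub X_{2}^{n}R_{M}$, which finishes the local check and hence the first assertion.

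For the ``consequently'' part I would apply the statement just proved with $X_{1}=Y$ and $X_{2}=X$. Write $P=\rad(X)$ and $Q=\rad(Y)$ for the unique minimal primes of the packets $X$ and $Y$, so that $\rad(X)\varsupsetneq \rad(Y)$ says $P\varsupsetneq Q$. Then every $t$-maximal ideal containing $X$ contains $\rad(X)=P\supseteq Q\supseteq Y$, hence contains $Y$, which is the required inclusion of the associated $t$-maximal sets; and if $X$ were contained in the unique minimal prime $Q$ of $Y$ we would get $P=\rad(X)\sub Q$, contradicting $P\varsupsetneq Q$, so $X$ lies in no minimal prime of $Y$. (Note that $(X+Y)_{t}\neq R$ is in fact automatic here, since $X+Y\sub P\neq R$.) The conclusion $Y\sub (X^{n})_{t}$ is precisely $(X^{n})_{t}\supseteq Y$ for all $n$.
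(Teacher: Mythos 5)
Your proof is correct and follows essentially the same route as the paper: verify $X_{1}R_{M}\subseteq X_{2}^{n}R_{M}$ at each $t$-maximal ideal $M$ (with the same case split on $W_{1}$, $W_{2}$), intersect to pass through the $w$-operation, and deduce the ``consequently'' part by applying the first part with $X_{1}=Y$, $X_{2}=X$. The only differences are cosmetic: you spell out the local step (the primeness iteration showing $PR_{M}\subseteq\bigcap_{n}x^{n}R_{M}$) that the paper leaves implicit, and you get by with $w\leq t$ where the paper invokes Kang's theorem that $w=t$ in a PVMD.
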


\begin{proof} Clearly for every $t$-maximal ideal $P$ in $W_{1}$ (and hence
for every $t$-maximal ideal $P)$ we have $X_{1}R_{P}\subseteq
X_{2}^{n}R_{P}$
for each $n$ because $X_{2}$ is not contained in the minimal prime of $
X_{1}R_{P}$. Thus for each $n,$ $\bigcap_P X_{1}R_{P}\subseteq \bigcap_P
X_{2}^{n}R_{P} $ or $(X_{1})_{w}\subseteq (X_{2}^{n})_{w}$. Since
in a PVMD for every nonzero ideal $I$ we have $I_{w}=I_{t}$
\cite[Theorem 3.5]{Kang}, and since $X_{1}$ is a $t$-ideal, we
conclude that for each $n$ we have $X_{1}\subseteq
(X_{2}^{n})_{t}$.

The proof of the consequently part
follows from the fact that every $t$-maximal ideal that contains $X$ contains $%
\rad(X)$ and hence $Y$. On the other hand, a $t$-maximal ideal
that contains $Y$ may or may not contain $X$ and $X$ has only one
minimal prime ideal which properly contains the minimal prime
ideal of $Y$.
\end{proof}

\begin{lemma}\label{lemma4} Let $R$ be a PVMD and let $X\subneq R$ be a $t$-invertible $t$-ideal.
Then the following conditions are equivalent:
\begin{itemize}
\item[(i)] $X$ is a packet;

\item[(ii)] for every $t$-factorization $X=(X_{1}X_{2})_{t},$
$X_{1}\supseteq (X_{2}^{2})_{t}$ or $X_{2}\supseteq
(X_{1}^{2})_{t}$.
\end{itemize}
\end{lemma}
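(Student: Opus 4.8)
The plan is to prove both implications by testing the relevant $t$-inclusions locally at the $t$-maximal ideals. The basic tools are: for a $t$-prime $P$ the localization $R_P$ is a valuation domain (fact (I)) in which every $t$-invertible $t$-ideal is principal (fact (III)); and $I=\bigcap_M IR_M$ over the $t$-maximal ideals $M$ for any $t$-ideal $I$, since $I=I_t=I_w$ in a PVMD. Writing $v_M$ for the valuation attached to $R_M$, an inclusion such as $(X_1^2)_t\subseteq X_2$ then holds if and only if $2v_M(X_1)\ge v_M(X_2)$ for every $t$-maximal $M$. Note also that in any factorization $X=(X_1X_2)_t$ the factors are $t$-invertible $t$-ideals, which I take integral, and $v_M(X)=v_M(X_1)+v_M(X_2)$ for all $M$.

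For (i)$\Rightarrow$(ii), let $P$ be the unique minimal prime of the packet $X$. Since $P\supseteq(X_1X_2)_t$, after relabelling I may assume $X_1\subseteq P$, so $v_P(X_1)>0$. Suppose (ii) fails, i.e. there are $t$-maximal ideals $M_1,M_2$ with $v_{M_1}(X_2)>2v_{M_1}(X_1)$ and $v_{M_2}(X_1)>2v_{M_2}(X_2)$; both then contain $X$, hence $P$. The heart of the argument is a propagation step: because $R_P$ is the localization of $R_{M}$ at $PR_{M}$, its value group is the quotient of that of $R_{M}$ by the convex subgroup cut out by $P$, and since $X_1\subseteq P$ forces $v_{M_1}(X_1)$ to lie strictly above this subgroup, the strict gap $v_{M_1}(X_2)-v_{M_1}(X_1)>v_{M_1}(X_1)$ survives in the quotient and gives $v_P(X_2)>v_P(X_1)$. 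In particular $v_P(X_2)>0$, so $X_2\subseteq P$ as well; feeding this into the same step at $M_2$ yields $v_P(X_1)>v_P(X_2)$, a contradiction. Hence one of the inclusions of (ii) holds. (When $X_2\not\subseteq P$ one can bypass the value-group computation: every $t$-maximal ideal over $X_2$ lies over $X$, hence over $P\supseteq X_1$, so Lemma~\ref{lemma3} applies with $W_2\subseteq W_1$ and gives $X_1\subseteq(X_2^2)_t\subseteq X_2$, whence $(X_1^2)_t\subseteq X_2$.)

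For (ii)$\Rightarrow$(i) I argue by contraposition: if $X$ is not a packet it has two distinct, necessarily incomparable, minimal primes $P_1\ne P_2$, and I produce a factorization violating (ii). Pick $t$-maximal ideals $M_1\supseteq P_1$ and $M_2\supseteq P_2$; by the tree property (II), $M_2$ cannot contain the incomparable pair $P_1,P_2$, so $P_1\not\subseteq M_2$. Choose $t_1\in P_1\setminus M_2$; since $t_1$ lies in the radical of $XR_{M_1}$ (whose contraction to $R$ is $P_1$), some power $s:=t_1^{\,k}$ satisfies $s\in XR_{M_1}$ while still $s\notin M_2$. Put $X_1:=(X+sR)_t$ and $X_2:=(XX_1^{-1})_t$. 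Then $X_1$ is a $t$-invertible $t$-ideal with $X\subseteq X_1\subseteq R$ (a sum of $t$-invertibles is $t$-invertible, as recorded in the section introduction), $X_2\subseteq R$ is $t$-invertible, and $(X_1X_2)_t=X$. At $M_1$ one has $X_1R_{M_1}=XR_{M_1}$, so $v_{M_1}(X_2)=0$ while $v_{M_1}(X_1)=v_{M_1}(X)>0$, whence $(X_2^2)_t\not\subseteq X_1$; at $M_2$, $s\notin M_2$ gives $v_{M_2}(X_1)=0$, so $v_{M_2}(X_2)=v_{M_2}(X)>0$ and $(X_1^2)_t\not\subseteq X_2$. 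Both inclusions of (ii) fail, and the factors are proper, so this is a genuine counterexample.

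I expect the main obstacle to be the gluing in (i)$\Rightarrow$(ii): the comparison of $X_1$ and $X_2$ is immediate only inside the single valuation domain $R_P$, and turning it into the global $t$-inclusion demanded by (ii) requires tracking the position of the local values relative to the convex subgroup determined by $P$. This is unavoidable precisely in the case $\rad(X_1)=\rad(X_2)=P$, where Lemma~\ref{lemma3} gives nothing; the factor of $2$ in (ii) is exactly what makes the strict gap persist when passing from $R_{M}$ to the coarser valuation $R_P$, and checking this persistence carefully is the crux of the argument.
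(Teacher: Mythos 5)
Your proof is correct, but it reaches the lemma by a genuinely different route, most visibly in (i)$\Rightarrow$(ii). There the paper never touches value groups: it splits off $\Delta=(X_{1}+X_{2})_{t}$, writes $X_{i}=(Y_{i}\Delta)_{t}$ with $(Y_{1}+Y_{2})_{t}=R$, notes that $t$-comaximality forces one $Y_{i}$ outside the unique minimal prime $Q$, and then Lemma \ref{lemma3} gives $\Delta\subseteq (Y_{1}^{n})_{t}$, hence $(X_{2}^{2})_{t}\subseteq(\Delta^{2})_{t}\subseteq X_{1}$ — a purely ideal-theoretic manipulation. Your convex-subgroup computation is a sound substitute: the key point, which you identify correctly, is that $v_{M}(X_{2})-v_{M}(X_{1})>v_{M}(X_{1})>H$ forces the difference to lie above the isolated subgroup $H$ of $PR_{M}$ and hence to remain strictly positive in the value group of $R_{P}$, which is where the exponent $2$ in (ii) is actually used; what you lose is that the argument now leans on valuation-theoretic facts the paper keeps offstage, and what you gain is a uniform treatment that does not need the comaximal splitting (your parenthetical appeal to Lemma \ref{lemma3} is then optional). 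For (ii)$\Rightarrow$(i) the two arguments share a skeleton — build a factorization of $X$ whose factors separate the two minimal primes and check it kills both inclusions — but your separating factor is constructed more directly: you take a power $s$ of an element of $P_{1}\setminus M_{2}$ large enough to fall into $XR_{M_{1}}$ (legitimate, since the tree property makes $P_{1}R_{M_{1}}$ the radical of $XR_{M_{1}}$) and set $X_{1}=(X+sR)_{t}$, so that $X_{1}R_{M_{1}}=XR_{M_{1}}$ and $X_{1}R_{M_{2}}=R_{M_{2}}$ on the nose; the paper instead takes $y\in P\setminus Q$, invokes Lemma \ref{lemma2} to find $n$ with $\Delta^{n}R_{P}\subsetneq XR_{P}$, and performs a second splitting $K=(X+\Delta^{n})_{t}$ before extracting the contradiction. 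Your construction is shorter and bypasses Lemma \ref{lemma2} entirely, at the modest cost of working at $t$-maximal ideals above the $P_{i}$ rather than at the $P_{i}$ themselves.
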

\begin{proof} (i) $\Rightarrow $ (ii). Let $Q$ be the unique minimal
prime of $X$ and let $X=(X_{1}X_{2})_{t}$. Indeed we can assume
$X_{i}\in \tinv(R)$. If either of $X_{i}=R$ (ii) holds, so let us
assume that both $X_{i}$ are proper. If either of the $X_{i}$ is not
contained in $Q$ the result follows from Lemma \ref{lemma3}. If both
are contained in $Q$ then, because $Q$ is a $t$-ideal, $\Delta=
(X_{1}+X_{2})_{t} \subseteq Q$. As described above we can write $
X_{1}=(Y_{1}\Delta )_{t}$ and $X_{2}=(Y_{2}\Delta )_{t}$ where
$Y_{i}\in \tinv(R)$ such that $(Y_{1}+Y_{2})_{t}=R$. Now $Y_{1},
Y_{2}$ being $t$-comaximal cannot share a prime $t$-ideal. If
$Y_{1}\nsubseteq Q$ then by Lemma \ref{lemma3}, $ \Delta \subseteq
(Y_{1}^{n})_{t}$ for all $n$ and so $\Delta
^{2}\subseteq (Y_{1}^{n})_{t}\Delta \subseteq X_{1}$,  for any $n$. Combining with
$ X_{2}^{2}\subseteq \Delta ^{2}$ and applying the $t$-operation we
conclude that $(X_{2}^{2})_{t}\subseteq X_{1}$. Same arguments apply
if $ Y_{2}\nsubseteq Q$.

(ii) $\Rightarrow$ (i). Suppose that $X$ satisfies (ii) and let, by
way of contradiction, $P$ and $Q$ be two distinct prime ideals
minimal over $X$. Now $P$ and $Q$ are prime $t$-ideals and so both
$R_{P}$ and $R_{Q}$ are valuation domains. Let $y\in P\backslash Q$.
Then $\Delta =(X+yR)_{t} \subseteq P$ and $\Delta \nsubseteq
Q$. By Lemma \ref{lemma2} there exists $n$ such that $\Delta
^{n}R_{P}\subsetneq XR_{P}$ and consequently $(\Delta
^{n})_{t}\nsupseteq X$. Consider $K=(X+\Delta ^{n})_{t}$. We can
write $ X=(AK)_{t}$ and $(\Delta ^{n})_{t}=(BK)_{t}$ where $A,B\in
\tinv(R)$ and $ (A+B)_{t}=R$; thus $A$ and $B$ cannot share a
minimal $t$-prime ideal. Since  $\Delta
^{n}\nsubseteq Q$ and since $K\supseteq \Delta ^{n}$ we conclude
that $K\nsubseteq Q$. But since $X=(AK)_{t}\subseteq Q$ we conclude
that $A\subseteq Q$. Next, we claim that $A\nsubseteq P$. For if
$A\subseteq P$ then since $(A+B)_{t}=R$, $B\nsubseteq P$ and
$BKR_{P}=\Delta ^{n}R_{P}\supseteq AKR_{P}=XR_{P}$. This contradicts
the fact that $\Delta ^{n}R_{P}\subsetneq XR_{P}$. Thus we have
$X=(AK)_{t}$ with

(a) $K\subseteq P$ and $K\nsubseteq Q$ and

(b) $A\subseteq Q$ and $A\nsubseteq P$

Now by (ii) $X=(AK)_{t}$ where $A\supseteq (K^{2})_{t}$ or
$K\supseteq
(A^{2})_{t}$. If $A\supseteq (K^{2})_{t}$ then as $A\subseteq Q$ we have $%
K\subseteq Q$ which contradicts (a). Next if $K\supseteq
(A^{2})_{t}$ then since $K\subseteq P$ we have $A\subseteq P$ and
this contradicts (b). Of course these contradictions arise from the
assumption that $X$ satisfying (ii) can have more than one minimal
prime.
\end{proof}

\begin{lemma}\label{lemma5} Let $A$ and $B$ be two ideals of a domain $R$. Then the following statements hold:
\begin{itemize}
\item[(1)] If
$(A+B)_{t}=R$ and  $A_{t}\supseteq (XB)_{t}$ for some nonzero ideal $X$, then $
A_{t}\supseteq X_{t}$.

\item[(2)] If $B=B_{1}B_{2}\ $then $(A+B)_{t}=R$ if and only if $
(A+B_{i})_{t}=R$ for $i=1, 2$.

\item[(3)] If $R$ is a PVMD and $X$ is a packet such that $X=(X_{1}X_{2})_{t},$ with $
(X_{1}+X_{2})_{t}=R$, then $(X_{1})_{t}=R$ or $(X_{2})_{t}=R$.

\end{itemize}
\end{lemma}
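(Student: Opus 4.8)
The three parts are of quite different character: (1) and (2) are purely formal manipulations with the $t$-operation, while (3) carries the real content of the packet hypothesis. The plan is to dispose of the first two quickly and to concentrate on the third.

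For (1), I would rewrite $X_t$ by playing the comaximality hypothesis against $\ast$-multiplication. Since $(A+B)_t=R$, one has $X_t=(X(A+B)_t)_t=(X(A+B))_t=(XA+XB)_t$. Because $X$ is an integral ideal, $XA\subseteq A\subseteq A_t$, while $XB\subseteq (XB)_t\subseteq A_t$ by hypothesis; hence $XA+XB\subseteq A_t$ and, applying $t$, $X_t=(XA+XB)_t\subseteq (A_t)_t=A_t$, which is the assertion.

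For (2), the forward implication is immediate from $B=B_1B_2\subseteq B_i$, which gives $A+B\subseteq A+B_i$ and therefore $R=(A+B)_t\subseteq (A+B_i)_t$. For the converse I would multiply the two comaximalities: $((A+B_1)(A+B_2))_t=((A+B_1)_t(A+B_2)_t)_t=R$, and since $(A+B_1)(A+B_2)\subseteq A+B_1B_2=A+B$ one obtains $R\subseteq (A+B)_t$, i.e. $(A+B)_t=R$.

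Part (3) is where the packet structure enters, and it is the step I expect to be the crux. Writing $X=(X_1X_2)_t$ with $X_1,X_2$ taken to be $t$-ideals by (IV), I would argue by contradiction, assuming both are proper. Let $Q$ be the unique minimal prime of the packet $X$. Since $Q\supseteq X\supseteq X_1X_2$ and $Q$ is prime, $Q$ contains $X_1$ or $X_2$; say $Q\supseteq X_1$. Now $X_2$ is a proper $t$-ideal, so it lies in some $t$-maximal ideal $M$ (using that $t$ is of finite type). From $X\subseteq (X_2)_t=X_2\subseteq M$ and the uniqueness of the minimal prime $Q$ of $X$, the prime $M$ must contain $Q$; hence $M\supseteq Q\supseteq X_1$ and also $M\supseteq X_2$, so $X_1+X_2\subseteq M$. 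This contradicts $(X_1+X_2)_t=R$, since a $t$-maximal ideal cannot contain a pair of $t$-comaximal ideals. Therefore one of the factors is improper, i.e. $(X_1)_t=R$ or $(X_2)_t=R$. The only delicate points are bookkeeping ones: ensuring each factor may be assumed to be a $t$-ideal (so that $X\subseteq X_i$ and $X_1X_2\subseteq X_i$ are legitimate), and recalling that the unique-minimal-prime property forces any prime lying above $X$ to lie above $Q$. The essential mechanism is the clash between ``everything collapses into the single prime $Q$'' and ``$t$-comaximal ideals share no $t$-maximal ideal''.
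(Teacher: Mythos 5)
Your parts (1) and (2) are correct and coincide with the paper's computations up to rearrangement: for (1) the paper writes $A_t \supseteq A_t+(XA)_t \supseteq (XB)_t+(XA)_t$ and applies $t$ to recognize $(X(A+B))_t=X_t$, which is your identity read in the other direction; for (2) it expands $(A+B_1B_2)_t=(A+B_1(A+B_2))_t$ instead of multiplying the two comaximalities, but the manipulation is the same. Part (3) is also correct, but here your route is genuinely different from the paper's. The paper deduces (3) from Lemma \ref{lemma4}, (i) $\Rightarrow$ (ii): a packet $X=(X_1X_2)_t$ satisfies $(X_1)_t\supseteq (X_2^2)_t$ or $(X_2)_t\supseteq (X_1^2)_t$, and then part (2) together with $(X_1+X_2)_t=R$ forces $(X_1)_t=(X_1+X_2^2)_t=R$. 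You instead argue directly from the definition of a packet: the unique minimal prime $Q$ absorbs one factor (primeness applied to $Q\supseteq X_1X_2$), while any $t$-maximal ideal $M$ over the other factor contains $X$, hence contains $Q$, hence contains both factors, contradicting $(X_1+X_2)_t=R$. Your argument is more elementary, since it bypasses the nontrivial implication (i) $\Rightarrow$ (ii) of Lemma \ref{lemma4}, and it is also more general: it nowhere uses that $R$ is a PVMD, only that $X$ is a proper $t$-ideal with a unique minimal prime and that $t$ has finite type (so proper $t$-ideals lie in $t$-maximal ideals, and every prime over $X$ contains a minimal prime of $X$). What the paper's approach buys is economy within its own architecture: Lemma \ref{lemma4} is already proved and needed elsewhere, so (3) falls out of it in one line. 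The one bookkeeping point in your proof --- replacing each $X_i$ by $(X_i)_t$ so that $X\subseteq (X_2)_t=X_2$ --- is legitimate, since this changes neither $(X_1X_2)_t$ nor $(X_1+X_2)_t$ nor the conclusion.
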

\begin{proof} (1). Note that $A_{t}\supseteq A_{t}+(XA)_{t}\supseteq
(XB)_{t}+(XA)_{t}$. Applying the $t$-operation $A_{t}\supseteq
((XB)_{t}+(XA)_{t})_{t}=(XB+XA)_{t}=(X(A+B))_{t}=X_{t}$.

(2). Suppose that $(A+B_{1}B_{2})_{t}=R$. Then $R=(A+B_{1}B_{2})_{t}
\subseteq (A+B_{i})_{t}\subseteq R$ for $i=1,2$. Conversely \ if $
(A+B_{i})_{t}=R$ for $i=1,2$ then $%
(A+B_{1}B_{2})_{t}=(A+AB_{1}+B_{1}B_{2})_{t}=(A+B_{1}(A+B_{2}))_{t}=(A+B_{1})_{t}.
$

(3). We note that $(X_{1})_{t}\supseteq (X_{2}^{2})_{t}$ or $%
(X_{2})_{t}\supseteq (X_{1}^{2})_{t}$ (Lemma \ref{lemma4}). If
$(X_{1})_{t}\supseteq
(X_{2}^{2})_{t}$ then $(X_{1})_{t}=(X_{1}+X_{2}^{2})_{t}=R$ by (2) because $%
(X_{1}+X_{2})_{t}=R$.
\end{proof}

\begin{lemma}\label{lemma6} Let $R$ be a PVMD and let $X,Y_{1},Y_{2}\sub R$ be $t$-invertible $t$-ideals.
If $X\supseteq (Y_{1}Y_{2})_{t}$ then $X=(X_{1}X_{2})_{t}$ where $%
X_{i}\supseteq Y_{i}$.
\end{lemma}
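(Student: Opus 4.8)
The plan is to produce $X_1$ and $X_2$ explicitly and then verify the two required containments. First I would set $X_1:=(X+Y_1)_t$. By the observation opening this section, since $X$ and $Y_1$ are $t$-invertible $t$-ideals, so is $\Delta=(X+Y_1)_t=X_1$; moreover $X_1\supseteq X$ and $X_1\supseteq Y_1$, so the containment $X_1\supseteq Y_1$ is immediate. Next I would put $X_2:=(XX_1^{-1})_t$. Because $X\subseteq X_1$ and $X_1$ is $t$-invertible, $X_2=(XX_1^{-1})_t\subseteq(X_1X_1^{-1})_t=R$ is an integral $t$-invertible $t$-ideal, and the $\ast$-multiplication rules give $(X_1X_2)_t=(XX_1X_1^{-1})_t=(X(X_1X_1^{-1})_t)_t=X$. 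Thus $X=(X_1X_2)_t$ is a genuine $t$-factorization, and it only remains to check $X_2\supseteq Y_2$.

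The heart of the matter is this last containment. I would first establish that $(Y_2X_1)_t\subseteq X$. Indeed, since $Y_2\subseteq R$ we have $(Y_2X)_t\subseteq X$, and by hypothesis $(Y_1Y_2)_t\subseteq X$; therefore
\[
(Y_2X_1)_t=(Y_2(X+Y_1))_t=(Y_2X+Y_1Y_2)_t\subseteq X.
\]
In particular $Y_2X_1\subseteq X$, so that $Y_2X_1X_1^{-1}\subseteq XX_1^{-1}$. Applying the $t$-operation and using $(X_1X_1^{-1})_t=R$ together with the $\ast$-multiplication rules yields $Y_2=(Y_2(X_1X_1^{-1}))_t\subseteq(XX_1^{-1})_t=X_2$, which is exactly what is wanted.

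The only delicate point is this cancellation step, where the $t$-invertibility of $X_1$ is essential: without it one cannot pass from $Y_2X_1\subseteq X=(X_1X_2)_t$ to $Y_2\subseteq X_2$. I expect this to be the main obstacle to a clean write-up, although it is routine once one remembers that the $t$-invertible $t$-ideals form a group under $t$-multiplication. If the formal cancellation feels unnatural, an alternative is to verify $X_2\supseteq Y_2$ locally: at each $M\in\tmax(R)$ the ring $R_M$ is a valuation domain in which all these ideals become principal (observation (III)), the hypothesis reads $v(XR_M)\le v(Y_1R_M)+v(Y_2R_M)$ in the value group, and a direct comparison gives $v(X_2R_M)=\max\{0,\,v(XR_M)-v(Y_1R_M)\}\le v(Y_2R_M)$; one then globalizes through $I_t=I_w=\bigcap_M IR_M$, which holds in a PVMD. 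Either route completes the proof.
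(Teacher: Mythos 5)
Your proof is correct and follows essentially the same route as the paper: both take $X_{1}=(X+Y_{1})_{t}$, take $X_{2}=(XX_{1}^{-1})_{t}$ (the paper's ideal $A$), and obtain $X_{2}\supseteq Y_{2}$ by cancelling the $t$-invertible ideal $X_{1}$. The only difference is cosmetic — you get $(Y_{2}X_{1})_{t}\subseteq X$ directly from $Y_{2}X\subseteq X$ and $(Y_{1}Y_{2})_{t}\subseteq X$, which lets you skip the paper's detour through the comaximal pair $X=(AX_{1})_{t}$, $Y_{1}=(BX_{1})_{t}$ and its appeal to Lemma \ref{lemma5}(1).
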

\begin{proof} Let $X_{1}=(X+Y_{1})_{t}$. Then $X=(AX_{1})_{t}$ and $%
Y_{1}=(BX_{1})_{t}$ where $(A+B)_{t}=R$. Now
$(AX_{1})_{t}\supseteq (BX_{1}Y_{2})_{t}$. Multiplying both sides
by $X_{1}^{-1}$ and applying $t$ we get
$A=(AX_{1}X_{1}^{-1})_{t}\supseteq
(BX_{1}X_{1}^{-1}Y_{2})_{t}=(BY_{2})_{t}$. Now $A\supseteq
(BY_{2})_{t}$ and $(A+B)_{t}=R$ and so by Lemma \ref{lemma5}
$A\supseteq Y_{2}$. Setting $A=X_{2}$ gives the result.
\end{proof}

The above result is new in principle but not in spirit. Let us
recall that a
domain $R$ is a pre-Schreier domain if for $a, b_{1}, b_{2}\in R\sm \{0\}$, $
a\mid b_{1}b_{2}$ implies that $a=a_{1}a_{2}$ where $a_{i}\mid
b_{i}$ for each $i=1, 2$ \cite{ZPresch}. Pre-Schreier generalizes
the notion of a Schreier domain introduced by Cohn \cite{C} as a
generalization of a GCD domain. In simple terms, a Schreier domain
is an integrally closed pre-Schreier domain. In \cite{DM}, a
domain $R$ was called quasi-Schreier if, for invertible ideals $X,Y_{1},Y_{2}\sub R$,
 $X\supseteq Y_{1}Y_{2}$ implies that $X=X_{1}X_{2}$ where $
X_{i}\supseteq Y_{i}$. It was shown in \cite{DM} that a Pr\"ufer
domain is quasi-Schreier, a result which can also be derived from
Lemma \ref{lemma6} by noting that a Pr\"ufer domain is a PVMD in
which every nonzero ideal is a $t$-ideal. One may tend to define a
domain $R$ as $t$-quasi-Schreier if,  for $t$-invertible $t$-ideals
$X,Y_{1},Y_{2}\sub R$,
 $X\supseteq (Y_{1}Y_{2})_t$ implies that $X=(X_{1}X_{2})_t$ where $
X_{i}\supseteq Y_{i}$, but this
does not seem to be a place for studying this concept.

\bigskip
{\it Proof of Theorem \ref{Theorem A}.} Let $R$ be a PVMD and let $X\subneq R$ such that
 $X=(X_{1}X_{2}\dots  X_{n})_{t}$, where $X_{i}$ are packets. By (3) of
  Lemma \ref{lemma1} the product of two non $t$-comaximal
packets is again a packet. So with a repeated use of  (3) of Lemma
\ref{lemma1} we can reduce the above expression to
$X=(Y_{1}Y_{2}\dots Y_{r})_{t}$ where $(Y_{i}+Y_{j})_{t}=R$ for $i$
$\neq j$. Now let $X^{\prime }$ be a packet such that $X^{\prime
}\supseteq X=(Y_{1}Y_{2}\dots Y_{r})_{t}$. Then
by Lemma \ref{lemma6}, $X^{\prime }=(UV)_{t}$ where $U_{t}\supseteq Y_{1}$ and 
$V_{t}\supseteq (Y_{2}\dots Y_{r})_{t}$. This gives
$U_{t}+V_{t}\supseteq
Y_{1}+(Y_{2}\dots Y_{r})_{t}$. Applying the $t$-operation, using the fact that $(Y_{i}+Y_{j})_{t}=R$ for $i\neq j$ and using Lemma \ref{lemma5} we
conclude that one of $U_{t},$ $V_{t}$ is $R$. If $V_{t}=R$ then
$X^{\prime }$ $\supseteq Y_{1}$ and $X^{\prime }$ does not contain
any of the  other $Y_{i}$. If $U_{t}=R$ then $X^{\prime }\supseteq
(Y_{2}\dots Y_{r})_{t}$ and using the above procedure we can in the
end conclude that $X^{\prime }$ contains precisely one of the
$Y_{i}$. So if $X$ has another expression $X=(B_{1}B_{2}\dots
B_{s})_{t}$ where $B_{i}$ are packets such that
$(B_{i}+B_{j})_{t}=R$ for $i\neq j$ then by the above procedure each
of $X_{i}$ contains precisely one of the $B_{j}$ and $B_{j}$
contains precisely one of the $X_{k}$. But then $i=k$. For if not then $X_{i}\supseteq X_{k}$ so $X_{i}=(X_{i}+X_{k})_{t}=R$ which is
impossible. Thus $r=s$ and each $X_{i}$ is equal to precisely one
of the $B_{j}$.

\bigskip
 {\it Proof of Theorem \ref{Theorem B}.} Let $R$ be a PVMD and
let $X\in \tinv(R)$ such that $X$ is a finite product of packets.
Then by Theorem \ref{Theorem A}, $X$ can be uniquely expressed as a
finite product of mutually $t$-comaximal packets. That $X$ has only
finitely many minimal primes follows from the fact that any minimal
prime over $X$ is minimal over some packet in its factorization.

 We prove the converse by induction
on the number of minimal primes of $X\in \tinv(R)$.  If $X$ has a
single minimal prime ideal then $X$ is a packet and we have nothing
to prove. Suppose that if $X$ has $k-1$ minimal primes then $X$ is
expressible as a product of $k-1$ mutually $t$-comaximal packets.
Now suppose that $P_{1},P_{2},\dots,P_{k}$ are all the minimal
primes of $X$. Choose $y\in P_{1}\backslash (P_{2}\cup P_{3}\cup
\dots \cup P_{k})$. By Lemma \ref{lemma2}, there exists a natural
number $n$ such that $XR_{P_{1}}\varsupsetneq y^{n}R_{P_{1}}$. Let
$\Delta =(y^{n}R+X)_{t}$ with $X=(\Delta X_{1})_{t}$ and
$y^{n}R=(\Delta Y_{1})_{t}$ \ where $(X_{1}+Y_{1})_{t}=R$ and
$X_{1},Y_{1}\in \tinv(R)$. Noting that in $R_{P_{1}},$
$A_{t}R_{P_{1}}=AR_{P_{1}}$ is principal if $A$ is $t$ -invertible,
we convert $XR_{P_{1}}\varsupsetneq y^{n}R_{P_{1}}$ into $ \Delta
X_{1}R_{P_{1}}\varsupsetneq \Delta Y_{1}R_{P_{1}}$ which reduces to
$ X_{1}R_{P_{1}}\varsupsetneq Y_{1}R_{P_{1}}$. Now since
$(X_{1}+Y_{1})_{t}=R, $ $X_{1},Y_{1}$ cannot both be in $P_{1}$.
Moreover $X_{1}R_{P_{1}}\varsupsetneq Y_{1}R_{P_{1}}$ ensures that
$X_{1}\nsubseteq P_{1}$,  otherwise $X_{1}$ and $Y_{1}$ would be in
$P_{1}$, and again the strict inclusion implies that $Y_{1}$ must be
in $P_{1}$. From these considerations we have that $X=(X_{1}\Delta
)_{t}$ where $X_{1}\nsubseteq P_{1},$ so $\Delta \subseteq P_{1}$
and because $\Delta \supseteq y^{n}R$ we conclude that $\Delta
\nsubseteq P_{2}\cup P_{3}\cup \dots \cup P_{k}$. As $X=(X_{1}\Delta
)_{t}\subseteq P_{2}\cap\dots\cap P_{k}$ we have $X_{1}\subseteq
P_{2},\dots,P_{k}$.

If $(\Delta +X_{1})_{t}=R$ then, since $X=(X_{1}\Delta )_{t},$ the
set of minimal primes of $X$ is partitioned  into two sets: ones
minimal over $ \Delta $ and ones minimal over $X_{1}$. Since
$X_{1}\subseteq P_{2},\dots ,P_{k}$ we conclude that $X_{1}$ has
$P_{2},\dots ,P_{k}$ for minimal primes and this leaves us with
$\Delta $ with a single minimal prime. Consequently $ \Delta $ is a
packet and $X_{1}$ is a $t$-product of $k-1$ mutually $t$-comaximal
packets, by induction hypothesis. So the theorem is proved in this
case.

Next, let $U=(\Delta +X_{1})_{t}\neq R$ with $\Delta
=(\Delta_{0}U)_{t}$ and $X_{1}=(X_{0}U)_{t}$ and note that
$(X_{0}+\Delta _{0})_{t}=R$. Since $ X_{1}\nsubseteq P_{1},$
$U\nsubseteq P_{1}$ and since $\Delta $ is not contained in any of
$P_{2},P_{3},\dots,P_{k}$ because $\Delta \supseteq y^{n}R$ we
conclude that $U\nsubseteq P_{2},P_{3},\dots,P_{k}$. Consequently $
U\nsubseteq P_{1},P_{2},P_{3},\dots,P_{k}$. But since
$X_{1}=(X_{0}U)_{t}$ is contained in each of
$P_{2},P_{3},\dots,P_{k}$ we conclude that $ X_{0}\subseteq
P_{2}\cap P_{3}\cap \dots \cap P_{k}$. Similarly $\Delta
_{0}\subseteq P_{1}$. We note here that since $P_{1}$ is a minimal prime of 
$X$ and since $X\subseteq \Delta _{0}\subseteq P_{1},$ $P_{1}$ must
be a minimal prime of $\Delta _{0}$. Now we establish that $P_{1}$
is indeed the unique minimal prime of $\Delta _{0}$. For this
assume that there is a prime ideal $Q$ that contains $\Delta
_{0}$. Then as $Q\supseteq \Delta _{0}\supseteq X,$ $Q$ must
contain one of $P_{1},P_{2},P_{3},\dots,P_{k}$. But since
$(X_{0}+\Delta _{0})_{t}=R$ and $X_{0}\subseteq P_{2}\cap
P_{3}\cap
\dots \cap P_{k}$ we conclude that $\Delta _{0}$ is not contained in any of $
P_{2},P_{3},\dots,P_{k}$. Thus $Q$ $\supseteq P_{1}$. Thus $P_{1}$ is the unique minimal
prime of $\Delta _{0}$ and $\Delta _{0}$ is a packet. Thus we have $
X=( X_{1}\Delta)_{t}=(X_{0}U\Delta _{0}U)_{t}=(X_{0}U^{2}\Delta
_{0})_{t}$. Now $(U^{2})_{t}\supseteq X$ $=(X_{0}U^{2}\Delta
_{0})_{t}$ and $U^{2}$ is not contained in any minimal prime of $X$,
so by Lemma \ref{lemma3}, $X\subseteq (U^{2n})_{t}$ for each natural
$n$. As $U$ is $t$-invertible $(X_{0}\Delta _{0})_{t}\subseteq
(U^{2})_{t}$. By Lemma \ref{lemma6}, $(U^{2})_{t}=(U_{1}U_{2})_{t}$
where $X_{0}\subseteq (U_{1})_{t}$ and $\Delta _{0}\subseteq
(U_{2})_{t}$. As $(X_{0}+\Delta _{0})_{t}=R$ we conclude that
$(U_{1}+U_{2})_{t}=R$.  Indeed $
(U_{1}+U_{2})_{t}=(U_{1}+X_{0}+\Delta
_{0}+U_{2})_{t}=(U_{1}+(X_{0},\Delta
_{0})_{t}+U_{2})_{t}=R$. Thus $X=(X_{0}U_{1}U_{2}\Delta _{0})_{t}$ where $%
(X_{0}U_{1}+U_{2}\Delta _{0})_{t}=R$, $P_{1}$ contains, and is
minimal over, $(U_{2}\Delta _{0})_{t}$ and $P_{2},\dots,P_{k}$ are
minimal over $ (X_{0}U_{1})_{t}$. It is easy to see that
$(U_{2}\Delta _{0})_{t}$ is a
packet and that by the induction hypothesis $(X_{0}U_{1})_{t}$ is a $t$%
-product of $k-1$ mutually $t$-comaximal packets. This establishes
the Theorem.

\medskip
 We are now in a position to characterize PVMD URD's.

\begin{thm}\label {Theorem C} A PVMD $R$ is a URD if and only if every nonzero
principal ideal of $R$ has at most finitely many minimal primes.
\end{thm}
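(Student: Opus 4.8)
The plan is to deduce Theorem \ref{Theorem C} directly from Theorems \ref{Theorem A} and \ref{Theorem B}, so that all the content reduces to controlling the minimal primes of $t$-invertible $t$-ideals. Recall that $R$ is a URD precisely when every proper $t$-invertible $t$-ideal is a $t$-product of finitely many mutually $t$-comaximal packets, and that by Theorem \ref{Theorem B} such a factorization exists for a given $X$ if and only if $\Min(X)$ is finite, while Theorem \ref{Theorem A} upgrades any finite packet factorization to the required $t$-comaximal (and unique) one. Thus the target is the equivalence ``$R$ is a URD'' $\lra$ ``every proper $t$-invertible $t$-ideal has finitely many minimal primes'', and the real work is to trade the hypothesis on principal ideals for the same statement on all $t$-invertible $t$-ideals.

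The direction ($\ra$) is immediate. A nonzero principal ideal $xR$ is an invertible, hence $t$-invertible, $t$-ideal; if it is proper it is, by the URD hypothesis, a $t$-product of finitely many packets, so the easy implication of Theorem \ref{Theorem B} gives that $\Min(xR)$ is finite. (If $x$ is a unit there is nothing to prove.)

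For ($\la$) assume every nonzero principal ideal has finitely many minimal primes and let $X\subneq R$ be a $t$-invertible $t$-ideal. Since $t$ is of finite type, $X$ is $t$-finite, say $X=(x_{1},\dots,x_{n})_{t}$ with each $x_{i}$ a nonzero nonunit, and every minimal prime of $X$ is a prime $t$-ideal. The key claim is that $\Min(X)\subseteq\bigcup_{i=1}^{n}\Min(x_{i}R)$, which at once makes $\Min(X)$ finite. To prove the claim, let $P$ be minimal over $X$; then $P\supseteq(x_{1},\dots,x_{n})$, and I will show $P$ is minimal over some $x_{i}R$. Suppose not: for each $i$ choose a minimal prime $Q_{i}\subsetneq P$ of $x_{i}R$. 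Each $Q_{i}$ is a prime $t$-ideal (being minimal over a principal ideal), and all the $Q_{i}$ lie below the $t$-prime $P$. Here is where the tree property (II) enters: $P$ is contained in a $t$-maximal ideal, below which the prime $t$-ideals form a chain, so $Q_{1},\dots,Q_{n}$ are totally ordered. Their largest member $Q$ then contains every $x_{i}$, hence $Q\supseteq(x_{1},\dots,x_{n})$; since $Q$ is itself a $t$-ideal we get $Q\supseteq(x_{1},\dots,x_{n})_{t}=X$ with $Q\subsetneq P$, contradicting the minimality of $P$ over $X$. This proves the claim, so $\Min(X)$ is finite.

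With finiteness of $\Min(X)$ in hand, Theorem \ref{Theorem B} writes $X$ as a $t$-product of finitely many packets and Theorem \ref{Theorem A} makes this a unique $t$-product of mutually $t$-comaximal packets; as $X$ was an arbitrary proper $t$-invertible $t$-ideal, $R$ is a URD. I expect the only delicate point to be the reduction $\Min(X)\subseteq\bigcup_{i}\Min(x_{i}R)$: one must use both that minimal primes of a $t$-ideal are $t$-primes and the tree property, since without the chain condition below $P$ a single minimal prime of some $x_{i}R$ could sit below several minimal primes of $X$ and the bound would fail.
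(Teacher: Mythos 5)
Your proof is correct and follows essentially the same route as the paper: both directions reduce to Theorems \ref{Theorem A} and \ref{Theorem B}, and the substance is showing that a $t$-invertible $t$-ideal $(x_1,\dots,x_n)_t$ inherits finiteness of minimal primes from the $x_iR$. The only (harmless) difference is that you prove $\Min(X)\subseteq\bigcup_i\Min(x_iR)$ directly via the tree property (in effect re-deriving the paper's later Lemma \ref{min} for $\ast=t$), whereas the paper's proof peels off generators one at a time by contradiction using Lemma \ref{lemma1}(1).
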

\begin{proof} If $R$ is a URD then obviously every proper principal
ideal of $R$ has finitely many minimal primes. Conversely, suppose
that $R$ is a PVMD such that every proper principal ideal of $R$
has finitely many minimal primes and let, by way of a
contradiction, $I$ be a $t$-invertible $t$-ideal
such that $I$ has infinitely many minimal primes. Since $I$ is a $t$-invertible $t$-ideal there exist $x_{1},x_{2},\dots,x_{n}\in I$ such that $I=(x_{1},x_{2},\dots,x_{n})_{t}$. By Lemma \ref{lemma1}, each
minimal prime of $I$ is a minimal prime of $x_{1}R$ or a minimal prime of $(x_{2},\dots,x_{n})_{t}$. Since $x_{1}R$ has only finitely many minimal primes we conclude that $(x_{2},\dots,x_{n})_{t}$ has infinitely many minimal primes. Using
Lemma \ref{lemma1} and the above procedure we can eliminate in turn
$x_{2},\dots,x_{n-2}$ and this leaves us with the conclusion that
$(x_{n-1},x_{n})_{t}$ has infinitely many minimal primes. But then
$x_{n-1}R$ has infinitely many minimal primes or $x_{n}R$ does and
this, in either case, contradicts the assumption that every proper
principal ideal of $R$ has finitely many minimal primes. We conclude by applying Theorem \ref{Theorem B}.
\end{proof}

PVMD's of $t$-finite character were studied by Griffin
\cite{GrKtype} under the name of rings of Krull type. Since the
$t$-prime ideals
 of a PVMD form a tree,
in a ring of Krull type every nonzero principal ideal has at most finitely
many minimal primes.
Thus we have the following corollary.

\begin{cor}\label{corollary7} A ring of Krull type is a URD.
\end{cor}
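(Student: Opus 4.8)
The plan is to reduce everything to Theorem \ref{Theorem C}. A ring of Krull type is, by definition, a PVMD of $t$-finite character, so by Theorem \ref{Theorem C} it suffices to prove that every nonzero principal ideal $xR$ of $R$ has at most finitely many minimal primes. Everything then comes down to combining the tree property with $t$-finite character.

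First I would recall that, since $t$ is of finite type, any prime minimal over the nonzero principal ideal $xR$ is a $t$-prime (the fact from \cite{HH} recorded in the introduction). Hence every $P\in\Min(xR)$ is a $t$-ideal containing $x$, and is therefore contained in at least one $t$-maximal ideal. This places all minimal primes of $xR$ among the $t$-primes lying under $t$-maximal ideals that contain $x$.

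The key step is the bound supplied by the tree property (II). Fix a $t$-maximal ideal $M$ with $x\in M$. By (II) the $t$-primes contained in $M$ are pairwise comparable, so if $P_{1}\neq P_{2}$ were two minimal primes of $xR$ both contained in $M$, comparability would force, say, $P_{1}\subneq P_{2}$, contradicting the minimality of $P_{2}$ over $xR$. Thus each $t$-maximal ideal containing $x$ lies over at most one minimal prime of $xR$.

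Finally, since every minimal prime of $xR$ lies under some $t$-maximal ideal containing $x$, choosing for each $P\in\Min(xR)$ one such $M$ gives an injection from $\Min(xR)$ into the set $\{M\in\tmax(R): x\in M\}$, injectivity being exactly the conclusion of the previous paragraph. The $t$-finite character of $R$ makes this latter set finite, so $xR$ has only finitely many minimal primes, and Theorem \ref{Theorem C} yields that $R$ is a URD. I do not expect a serious obstacle here: the entire content is the interplay of the tree property (II) with $t$-finite character, and both are in hand, so the argument is a short bookkeeping step followed by a direct appeal to Theorem \ref{Theorem C}.
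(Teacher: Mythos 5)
Your proof is correct and follows the same route as the paper: the paper's justification is precisely the remark preceding the corollary, namely that a ring of Krull type is a PVMD of $t$-finite character, that the tree property of $\tspec(R)$ forces each $t$-maximal ideal containing $x$ to contain at most one minimal prime of $xR$, so $t$-finite character bounds $\Min(xR)$, and then Theorem \ref{Theorem C} applies. You have simply written out in full the bookkeeping the paper leaves implicit.
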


Indeed, as a GCD-domain is a PVMD such that every $t$-invertible
$t$-ideal is principal, we recover the characterization of GCD URD's given in \cite{ZURD}.

\begin{cor}\label{corollary9} A GCD domain $R$ is a URD if and only if every
proper principal ideal of $R$ is a finite product of packets.
\end{cor}

Using \cite{ZURD} we can also see how some other generalizations of
UFD's can be considered as special cases of PVMD URD's.

\section{General Approach}

The notion of unique representation of a $t$-invertible $t$-ideal
$X\subneq R$ as a $t$-product of packets developed in Section
\ref{one} is somewhat limited in that it deals with $t$-invertible
$t$-ideals which have a unique minimal prime ideal. To bring more
integral domains under the umbrella of unique representation one may
need to concentrate on the properties of packets. We have the
following thoughts on this.

(1). We can replace in the definition of a packet ``$t$-invertible $t$
-ideal" with ``$\ast $-invertible $\ast $-ideal", for a star operation
$\ast$ of finite type, thus getting the notion of $\ast $-packet (a $\ast
$-invertible $\ast$-ideal with a unique minimal prime ideal), as we
have already defined in the introduction. Since, for 
$\ast$ of finite type, every $\ast$-invertible $\ast$-ideal is  $t$-invertible, 
when we define a $\ast $-URD in the usual fashion as a
domain in which every $\ast$-invertible $\ast$-ideal $X\subneq R$
is expressible uniquely as a $\ast $-product of finitely many
mutually $\ast $-comaximal $\ast$-packets, we get a URD albeit of a
special kind. We can settle for the fact that a $\ast$-URD has two special cases: a URD
and a $d$-URD.  In a $d$-URD every invertible ideal is a product of finitely many (invertible) packets and we may note that our results will show that a URD is not necessarily a $d$-URD. It would be interesting to find a star operation $\ast$ 
of finite type, different from $d$ and $t$, and a URD that is a $\ast$-URD.  

 (2). The other choice depends upon another
property of packets: a packet $X$ is such that if
$X=(X_{1}X_{2})_{t}$ where $X_{1},X_{2}$ are proper $t$-ideals then
$(X_{1}+X_{2})_{t}\neq R$. More generally, we can take a star
operation $\ast $ of finite type and call an ideal $X\subneq R$
$\ast $-pseudo irreducible if $X$ is a $\star$-ideal such that
$X=(X_{1}X_{2})^{\ast }$
and $(X_{1}+X_{2})^{\ast }=R$ implies that $(X_{1})^{\ast }=R$ or $(X_{2})^{\ast }=R$. 
This will allow us to develop  the theory of
factorization of $\ast $-ideals on the same lines as the Unique Comaximal Factorization domains of
Swan and McAdam \cite{A, SM}. In this paper, however, we shall
pursue the $\ast$-URD's.

\smallskip
Given a star operation $\ast$ of finite type, we start by giving conditions for which a proper $\ast$-ideal $X$ of a domain $R$ (not necessarily a PVMD) can be written as a
$\ast$-product in the form $X=(X_1X_2\ldots X_n)^\ast$, where each
$X_i$ is a $\ast$-ideal with  prime radical and the $X_i$'s are
pairwise $\ast$-comaximal $\ast$-ideals. In the case $\ast=d$, this
question was investigated in \cite{BH}. Our first result shows that
if such a factorization exists, then it is unique (up to order).

\begin{lemma}\label{comax} Let $R$ be an integral domain and $\ast$ a star operation on $R$.
Let $X$ and $\{Y_i\}_{i=1}^{n}$ be  nonzero ideals of $R$ such
that $(X+Y_i)^\ast=R$ for each $i$. Then
$(X+\prod_{i=1}^nY_i)^\ast=R$.
\end{lemma}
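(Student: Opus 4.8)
The plan is to prove the statement by induction on $n$, with the whole argument resting on the fact that $\ast$-multiplication and $\ast$-addition are available for an arbitrary star operation (and in particular without any finite-type hypothesis or any assumption on $R$). The base case $n=1$ is exactly the given hypothesis $(X+Y_1)^\ast=R$. For the inductive step I would set $P=\prod_{i=1}^{n-1}Y_i$, assume the inductive hypothesis $(X+P)^\ast=R$, and reduce the goal to the two-factor statement $(X+PY_n)^\ast=R$.

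The core of the inductive step is precisely the two-factor identity already recorded (for the $t$-operation) in Lemma \ref{lemma5}(2), whose proof uses nothing beyond the star-operation axioms and therefore carries over verbatim to a general $\ast$. Concretely, since $XP\subseteq X$ I would compute
\[
(X+PY_n)^\ast=(X+XP+PY_n)^\ast=(X+P(X+Y_n))^\ast=(X+P(X+Y_n)^\ast)^\ast,
\]
where the first equality holds because $X+XP=X$, the second because $XP+PY_n=P(X+Y_n)$, and the last is $\ast$-multiplication together with $\ast$-addition. The hypothesis $(X+Y_n)^\ast=R$ now collapses the inner factor, so the right-hand side equals $(X+PR)^\ast=(X+P)^\ast$, which is $R$ by the induction hypothesis. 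This closes the induction and yields $\left(X+\prod_{i=1}^n Y_i\right)^\ast=R$.

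There is no serious obstacle here: the result is a routine $n$-fold iteration of the comaximality-versus-product identity. The only point demanding care is the bookkeeping of the closures, namely invoking $(IJ)^\ast=(IJ^\ast)^\ast$ and $(I+J)^\ast=(I+J^\ast)^\ast$ at exactly the right moments, and observing that these identities (and hence the entire argument) require neither that $\ast$ be of finite type nor that $R$ be a PVMD. It is worth stating this explicitly, since it is exactly why the lemma is valid for an arbitrary star operation on an arbitrary domain, which is the generality in which it is later applied.
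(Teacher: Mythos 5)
Your proof is correct and is essentially the paper's own argument: the paper proves the case $n=2$ via the chain $R=(X+Y_1)^\ast=(X+Y_1(X+Y_2)^\ast)^\ast=(X+Y_1(X+Y_2))^\ast=(X+Y_1X+Y_1Y_2)^\ast=(X+Y_1Y_2)^\ast$ and then invokes induction, which is the same computation you perform (read in the opposite direction) with the induction made explicit. Your closing observation that only the star-operation axioms are needed, with no finite-type hypothesis, matches the generality in which the paper states and uses the lemma.
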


\begin{proof}  We give a proof in the case $n=2$; the general case follows by induction.
Assume that $(X+Y_1)^\ast=(X+Y_2)^\ast=R$. We have
$R=(X+Y_1)^\ast=(X+Y_1(X+Y_2)^\ast)^\ast=(X+Y_1(X+Y_2))^\ast=(X+Y_1X+Y_1Y_2)^\ast=
(X+Y_1Y_2)^\ast$, as desired.
\end{proof}

\begin{lemma}\label{zlemma} Let $R$ be an integral domain and $\ast$ a star operation on $R$.
Let $X$ and $Y$ be two nonzero ideals of $R$ such that
$(X+Y)^\ast=R$. If $X^\ast\supseteq (ZY)^\ast$, for some nonzero
ideal $Z$, then $X^\ast\supseteq Z^\ast$.

\end{lemma}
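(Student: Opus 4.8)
The plan is to observe that this statement is precisely the analogue of Lemma \ref{lemma5}(1) with the $t$-operation replaced by an arbitrary star operation $\ast$, and that its proof needs nothing beyond the formal $\ast$-multiplication and $\ast$-addition identities recorded in the introduction; in particular, no finite-type hypothesis is used. The engine of the argument is the single identity $Z^\ast=(Z(X+Y))^\ast$, which is available exactly because $(X+Y)^\ast=R$.

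First I would expand the hypothesis $(X+Y)^\ast=R$ through $\ast$-multiplication: since $(Z(X+Y))^\ast=(Z(X+Y)^\ast)^\ast=(ZR)^\ast=Z^\ast$, and since $Z(X+Y)=ZX+ZY$, I obtain $Z^\ast=(ZX+ZY)^\ast$. Then $\ast$-addition rewrites the right-hand side as $((ZX)^\ast+(ZY)^\ast)^\ast$.

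Next I would bound each summand by $X^\ast$. The term $(ZY)^\ast$ is contained in $X^\ast$ by hypothesis. For the term $(ZX)^\ast$, I use that $Z$ is an integral ideal, so $ZX\subseteq X$ and hence $(ZX)^\ast\subseteq X^\ast$ by monotonicity of $\ast$. Therefore $(ZX)^\ast+(ZY)^\ast\subseteq X^\ast$, and applying $\ast$ (which is monotone and idempotent) gives $((ZX)^\ast+(ZY)^\ast)^\ast\subseteq (X^\ast)^\ast=X^\ast$. Chaining this with the previous step yields $Z^\ast\subseteq X^\ast$, which is the desired conclusion.

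I do not anticipate a genuine obstacle here; the only point deserving care is the containment $(ZX)^\ast\subseteq X^\ast$, which is where integrality of the ideals (ideals of $R$, not fractional ideals) enters — without it the summand $(ZX)^\ast$ need not sit inside $X^\ast$ and the bound collapses. Everything else is purely formal manipulation valid for any star operation, so the argument transfers essentially verbatim from the $t$-operation setting of Lemma \ref{lemma5}(1).
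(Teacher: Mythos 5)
Your proof is correct and is essentially the paper's own argument: both rest on the identity $(Z(X+Y))^\ast=Z^\ast$, the containment $(ZX)^\ast\subseteq X^\ast$ coming from integrality of $Z$, and the hypothesis $(ZY)^\ast\subseteq X^\ast$, differing only in whether one starts the chain of inclusions from $X^\ast$ or from $Z^\ast$.
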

\begin{proof}  If $X^\ast\supseteq (ZY)^\ast$, we have $X^\ast\supseteq
X^\ast+(ZY)^\ast\supseteq (ZX)^\ast+(ZY)^\ast$. Applying the
$\ast$-operation, $X^\ast\supseteq ((ZX)^\ast+(ZY)^\ast)^\ast=
(ZX+ZY)^\ast= (Z(X+Y))^\ast=Z^\ast$.
\end{proof}

\begin{thm}\label{uniqueness} Let $R$ be an integral domain and $\ast$ a star operation of finite type
 on $R$. Let $X\subneq  R$ be a $\ast$-ideal  and suppose that $X$
 has a $\ast$-factorization of the form $X=(X_1X_2\ldots
 X_n)^\ast$, where each $X_i$ is a $\ast$-ideal with prime radical and the $X_i$'s
 are pairwise $\ast$-comaximal $\ast$-ideals. Then, up to order, such a
 $\ast$-factorization is unique.
\end{thm}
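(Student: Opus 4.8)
The plan is to show that any two such factorizations must coincide by matching up their factors via the radicals, using the fact that each factor has \emph{prime} radical together with the pairwise $\ast$-comaximality. So suppose we have two $\ast$-factorizations
$$X=(X_1X_2\cdots X_n)^\ast=(B_1B_2\cdots B_m)^\ast,$$
where each $X_i$ and each $B_j$ is a $\ast$-ideal with prime radical, the $X_i$'s are pairwise $\ast$-comaximal, and the $B_j$'s are pairwise $\ast$-comaximal. Let $P_i=\rad(X_i)$ and $Q_j=\rad(B_j)$; these are prime ideals. The first observation I would make is that pairwise $\ast$-comaximality of the $X_i$ forces the primes $P_i$ to be pairwise distinct (indeed pairwise $\ast$-incomparable): if $P_i\subseteq P_{i'}$ then $(X_i+X_{i'})^\ast$ would lie in the proper ideal $P_{i'}$, contradicting comaximality. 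The same holds for the $Q_j$.

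First I would fix an index $i$ and locate a $B_j$ whose radical equals $P_i$. Since $X=(\prod X_k)^\ast\subseteq X_i$ and $\rad(X)=\rad((\prod X_k)^\ast)$, the prime $P_i$ contains $X=(\prod_j B_j)^\ast$, hence contains the product $\prod_j B_j$, hence (being prime) contains some $B_j$; thus $Q_j=\rad(B_j)\subseteq P_i$. Running the symmetric argument starting from that $B_j$ produces an $X_k$ with $P_k=\rad(X_k)\subseteq Q_j\subseteq P_i$, and since the $P$'s are pairwise incomparable this forces $k=i$ and $P_i=Q_j$. This sets up a well-defined injection $i\mapsto j(i)$ on radicals, and by symmetry it is a bijection, so $m=n$ and after reindexing we may assume $P_i=Q_i$ for all $i$.

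The remaining, and harder, step is to upgrade equality of radicals to equality of the ideals themselves, $X_i=B_i$. Here I would isolate the $i$-th factor by exploiting comaximality. Because $P_i=\rad(X_i)$ is prime and is $\ast$-comaximal with every $X_{i'}$ ($i'\neq i$), and since $\ast$ is of finite type, $X_{i'}\not\subseteq P_i$ for $i'\neq i$; by Lemma~\ref{comax} the product $\prod_{i'\neq i}X_{i'}$ is $\ast$-comaximal with $X_i$, and likewise $\prod_{j\neq i}B_j$ is $\ast$-comaximal with $B_i$. The idea is then to compare $X_i$ with $B_i$ directly. From $X=(X_iY_i)^\ast=(B_iZ_i)^\ast$, where $Y_i=(\prod_{i'\neq i}X_{i'})^\ast$ and $Z_i=(\prod_{j\neq i}B_j)^\ast$, I would argue that $X_i^\ast\supseteq X=(B_iZ_i)^\ast$ while $Z_i$ is $\ast$-comaximal with $X_i$ (its radical avoids $P_i$, as each $Q_j=P_j\neq P_i$ gives $B_j\not\subseteq P_i$); applying Lemma~\ref{zlemma} with the comaximal pair $X_i$ and $Z_i$ yields $X_i\supseteq B_i$. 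By the completely symmetric argument $B_i\supseteq X_i$, whence $X_i=B_i$.

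The step I expect to be the main obstacle is the passage from equal radicals to equal ideals: the clean application of Lemma~\ref{zlemma} requires knowing that the \emph{complementary} product $Z_i$ is genuinely $\ast$-comaximal with $X_i$, which in turn rests on the fact (needing $\ast$ of finite type) that a prime radical $P_i$ cannot contain any factor indexed differently, since that factor's prime radical $Q_j=P_j$ is incomparable to $P_i$. Assembling these comaximality facts correctly, and checking that $\ast$-invertibility is not needed anywhere (only that $\ast$ is of finite type, so that $\ast$-comaximality is detected on $\ast$-maximal ideals), is where the care lies; the matching-of-radicals step is comparatively routine once the incomparability of the $P_i$ is established.
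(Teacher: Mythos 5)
Your argument follows the paper's proof almost exactly: match the two sets of factors by their prime radicals (which are the minimal primes of $X$), then for each $i$ apply Lemma~\ref{comax} to the complementary product and Lemma~\ref{zlemma} to obtain $X_i\supseteq B_i$, and conclude by symmetry. The radical-matching step you spell out in more detail than the paper does, and correctly.

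The one place where your justification is off is the claim that $Z_i=(\prod_{j\neq i}B_j)^\ast$ is $\ast$-comaximal with $X_i$ \emph{because} each $B_j$ with $j\neq i$ is not contained in $P_i=\rad(X_i)$. Mutual non-containment of radicals does not imply $\ast$-comaximality: in $\mathbb{Z}[X]$ with $\ast=d$, the principal primes $(2)$ and $(X)$ are incomparable yet $(2)+(X)=(2,X)\neq \mathbb{Z}[X]$. What actually gives $(X_i+B_j)^\ast=R$ for $i\neq j$ is that, for $\ast$ of finite type, $\ast$-comaximality is detected on radicals: a $\ast$-maximal ideal $M$ containing $X_i+B_j$ is prime, hence contains $\rad(X_i)+\rad(B_j)=P_i+P_j\supseteq X_i+X_j$, contradicting the hypothesis $(X_i+X_j)^\ast=R$. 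This is precisely the paper's one-line remark that, the $P_i$'s being ($\ast$-)primes, $(X_i+Y_j)^\ast=R$ for $i\neq j$. With that substitution your proof is complete and coincides with the paper's; you are also right that $\ast$-invertibility plays no role in this theorem.
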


\begin{proof}
Let $X=(X_1X_2\ldots X_n)^\ast=(Y_1Y_2\ldots Y_m)^\ast$ be two
$\ast$-factorizations of $X$ as in the hypotheses. Let $P_1, P_2,
\ldots, P_n$ be the prime ideals minimal over $X_1, X_2, \ldots, X_n$,
respectively. Then, up to order, $Y_1, Y_2, \ldots, Y_m$ must have
the
 same associated minimal primes. Hence $m=n$. We can assume that
 $\rad(Y_i)=P_i$ for each $i=1,\ldots, n$. Since the $P_i$'s are $\ast$-ideals,
 then $(X_i+Y_j)^\ast=R$ for $i\not=j$.
 For a fixed $i$, set $Z_i=\prod_{j\not=i}Y_j$.
  We have $(X_i+Z_i)^\ast=R$ (Lemma \ref{comax}) and $X_i\supseteq X=(Y_iZ_i)^\ast$. Hence
 by Lemma \ref{zlemma}, $X_i\supseteq Y_i$.
 Similarly, $X_i\sub Y_i$. Hence $X_i=Y_i$, for each $i$.
\end{proof}

We next investigate  the decomposition of $\ast$-ideals into
$\ast$-comaximal $\ast$-ideals. Recall that $\starspec(R)$ is treed
if any two incomparable $\ast$-primes are $\ast$-comaximal.

\begin{prop}\label{fact0}
Let $R$ be a domain and let $\ast$ be a  star operation of finite type on $R$.
If $\starspec(R)$ is treed, the following conditions are
equivalent for a $\ast$-ideal $I\subneq R$:
\begin{itemize}
\item[(i)] $I$ has finitely many minimal primes;
\item[(ii)] $I = (Q_{1}Q_{2}\dots  Q_{n})^\ast$, where
the $Q_{i}$'s are $\ast$-ideals with prime radical;
\item[(iii)] $I=(Q_{1}Q_{2}\dots  Q_{n})^\ast$, where
the $Q_{i}$'s are pairwise $\ast$-comaximal $\ast$-ideals with prime radical.
\end{itemize}
Under any of these conditions, if $I$ is $\ast$-invertible, the $Q_{i}$'s are $\ast$-packets.
\end{prop}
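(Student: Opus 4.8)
The plan is to prove the equivalences in a cycle, establishing (iii) $\Rightarrow$ (ii) $\Rightarrow$ (i) $\Rightarrow$ (iii), and then to verify the final assertion about $\ast$-packets separately. The implication (iii) $\Rightarrow$ (ii) is immediate, since a decomposition into pairwise $\ast$-comaximal $\ast$-ideals with prime radical is in particular a decomposition into $\ast$-ideals with prime radical, and nothing further is required. For (ii) $\Rightarrow$ (i), I would argue that every minimal prime $P$ of $I$ must contain some $Q_i$ (because $I \subseteq P$ and $I = (Q_1 \cdots Q_n)^\ast \subseteq P$ forces $\prod Q_i \subseteq P$, whence some $Q_i \subseteq P$), so $P \supseteq \rad(Q_i)$; since each $\rad(Q_i)$ is prime and $P$ is minimal over $I$, I expect $P$ to coincide with $\rad(Q_i)$. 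This bounds the number of minimal primes of $I$ by $n$, giving (i).

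The substantive implication is (i) $\Rightarrow$ (iii), where the treedness hypothesis on $\starspec(R)$ must do the real work. I would proceed as follows. Let $P_1, \dots, P_n$ be the finitely many minimal primes of $I$; since $\ast$ is of finite type, each $P_j$, being minimal over a $\ast$-ideal, is a $\ast$-prime. For each $j$, I would define $Q_j$ to be the $P_j$-primary-like component of $I$, concretely something like $Q_j = (I R_{P_j} \cap R)^\ast$ or, better suited to the $\ast$-machinery, the $\ast$-ideal obtained by localizing $I$ at the multiplicatively closed set avoiding $P_j$ and contracting. The goal is to show $\rad(Q_j) = P_j$, that the $Q_j$ are pairwise $\ast$-comaximal, and that $(Q_1 \cdots Q_n)^\ast = I$. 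The pairwise $\ast$-comaximality is exactly where treedness enters: two distinct minimal primes $P_i, P_j$ of $I$ are incomparable $\ast$-primes, hence $\ast$-comaximal by the treed hypothesis, and this should propagate to $\ast$-comaximality of the components $Q_i, Q_j$ since $\rad(Q_i) = P_i$. For the product formula, I would use the observation recorded in the Introduction that for a finite family of pairwise $\ast$-comaximal ideals one has $(\bigcap Q_\al)^\ast = (\prod Q_\al)^\ast$, reducing the claim to showing $I = (\bigcap_j Q_j)^\ast$; this intersection identity should follow from a local check at each $\ast$-maximal ideal, using that at most one $P_j$ is contained in any given $\ast$-maximal ideal (again by treedness) together with $R = \bigcap_{M \in \starmax(R)} R_M$.

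For the final sentence, assume in addition that $I$ is $\ast$-invertible. Each $Q_i$ has prime radical $P_i$, so $Q_i$ already has a unique minimal prime; it remains only to see that $Q_i$ is $\ast$-invertible. Since $I = (Q_1 \cdots Q_n)^\ast$ and the factors are pairwise $\ast$-comaximal, $\ast$-invertibility of the product should descend to each factor: multiplying the identity $(I I^{-1})^\ast = R$ against the complementary factors and using Lemma \ref{comax} and Lemma \ref{zlemma} to strip off the $\ast$-comaximal partners, I expect to extract $(Q_i (Q_i)^{-1})^\ast = R$. Thus each $Q_i$ is a $\ast$-invertible $\ast$-ideal with a unique minimal prime, i.e.\ a $\ast$-packet.

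The main obstacle I anticipate is the construction of the components $Q_j$ and the proof that $(\bigcap_j Q_j)^\ast = I$ with the correct radicals; everything hinges on getting the localization-and-contraction definition to interact cleanly with the $\ast$-operation and on exploiting treedness to guarantee that distinct minimal primes never coexist in a single $\ast$-maximal ideal, so that the local pictures glue to give exactly $I$ rather than something larger. Once the components are correctly defined and shown pairwise $\ast$-comaximal with prime radicals $P_j$, the product formula and the $\ast$-invertibility of the factors should follow by the comaximal-factorization lemmas already in hand.
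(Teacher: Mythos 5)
Your architecture mirrors the paper's (a cycle of implications, one component per minimal prime, treedness for $\ast$-comaximality, intersection equals $\ast$-product for comaximal ideals, a local check at $\ast$-maximal ideals, and descent of $\ast$-invertibility to the factors), and your treatments of (iii) $\Rightarrow$ (ii), (ii) $\Rightarrow$ (i) and of the final assertion are fine. But the construction you propose for the components in (i) $\Rightarrow$ (iii) --- $Q_j=(IR_{P_j}\cap R)^\ast$, which is the same thing as ``localizing at the multiplicative set avoiding $P_j$ and contracting'' --- does not work, and this is exactly the step where the whole proof lives. The inclusion $(\bigcap_j Q_j)^\ast\sub I$ requires, for each $M\in\starmax(R)$ with $P_j\sub M$, that $IR_{P_j}\cap R\sub IR_M$; but an element $x$ of $IR_{P_j}\cap R$ only satisfies $sx\in I$ for some $s\in R\sm P_j$, and such an $s$ may well lie in $M\sm P_j$, so $x$ need not lie in $IR_M$. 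This already fails when $n=1$: take $V$ a rank-two valuation domain with height-one prime $P$ and $0\neq x\in P$, and $\ast=d$; then $I=xV$ has unique minimal prime $P$, yet $xV_P\cap V=P\supsetneq xV$, so your $Q_1$ is strictly larger than $I$ and the claimed factorization $I=(Q_1)^\ast$ is false.

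The paper avoids this by saturating with respect to a carefully chosen \emph{finitely generated} ideal rather than the complement of $P_j$. Using treedness and Lemma \ref{comax}, one has $(P_i+\prod_{j\neq i}P_j)^\ast=R$, so there is a finitely generated $A_i\sub\prod_{j\neq i}P_j$ with $(P_i+A_i)^\ast=R$; the component is then $I_i=I_{\mc F_i}\cap R=\bigcup_{n}(I:A_i^n)\cap R$, the contraction of $I$ under the finite-type localizing system generated by the powers of $A_i$ (equivalently, via the ideal transform $T_i=\bigcup_n(R:A_i^n)$). The point is that $(P_i+A_i)^\ast=R$ forces $A_i\nsub M$ for \emph{every} $\ast$-maximal $M$ containing $P_i$, so the denominators $A_i^n$ are units locally at every such $M$ and the containment $I_i\sub IR_M$ does hold; this is precisely the property your localization at $R\sm P_i$ lacks. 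With that replacement (and the verification that $\rad(I_i)=P_i$, which the paper does through the minimal primes of $I_{\mc F_i}$ in $T_i$), the rest of your outline goes through as you describe.
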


\begin{proof} (i) $\ra$ (iii).  Let $P_1, P_2,\ldots, P_n$ be the prime ideals minimal over
$I$. Since $\starspec(R)$ is treed,  $(P_i+P_j)^\ast=R$
for $i\neq j$. Thus, by Lemma \ref{comax}, $(P_i+\prod_{j\not=i}
P_j)^\ast=R$ for each $i$. Denote by $A_i$ a finitely generated ideal of $R$
such that $A_i\subseteq\prod_{j\not=i} P_j$ and
$(P_i+A_i)^\ast=R$.
 For each $i$,  consider the ideal transform of $A_i$, $T_i=\bigcup_{n\geq 0}(R:A_i^n)$.
 Since $A_i$ is finitely generated, the set $\mc F_i$ of the  ideals  $J\sub R$ such that
  $A_i^n\sub J$ for some $n$
 is a localizing system of finite type on $R$ (for the definitions, see \cite{FL}). Moreover, we have  $T_i=R_{\mc F_i}$.
 Hence the map $E\mapsto E^{\star_i}=E_{\mc F_i}$
 on the set of nonzero $R$-submodules of $K$ defines a semistar operation of finite type on $R$
 \cite[Propositions 2.4 and 3.2]{FL},
 which induces a star operation of finite type on $T_i$ ($T_i^{\star_i}=T_i$),
 here still denoted by  $\star_i$. For more details on the concept of semistar
 operation, see \cite{OM}.

Set $I_i=I_{\mc F_i}\cap R$. We claim that $\rad (I_i)=P_i$; in fact
$\rad (I_{\mc F_i})=(P_i)_{\mc F_i}$. To see this, first note that
$I_{\mc F_i}\not=T_i$; otherwise, $A_i^n\sub I$ for some $n\ge$1,
which is impossible since $(A_i^n+P_i)^\ast=R$. Let  $\mc P\sub T_i$
be a minimal prime of $I_{\mc F_i}=(IT_i)^{\star_i}$.  Then $\mc P$
is a $\star_i$-ideal and $\mc P\cap R$ contains a minimal prime
$P_j$ of $I$. We have $(IT_i)^{\star_i} \sub (P_jT_i)^{\star_i} =
(P_j)_{\mc F_i} \sub (\mc P\cap R)^{\star_i} \sub \mc P^{\star_i}=
\mc P$. But for all $i\neq j$, $A_i\sub P_j$ and so $(P_j)_{\mc
F_i}=T_i$. Hence $P_i$ is the unique minimal prime of $I$ contained
in $\mc P\cap R$ and it follows that  $\mc P= (P_i)_{\mc F_i}$.
Hence rad$(I_i)=P_i$. Also rad$(I_i^\ast)=P_i$, since $P_i$ is a
$\ast$-ideal.

 We next show that $I=(Q_1Q_2\ldots Q_n)^\ast$, where $Q_i=I_i^\ast$.
 For this, since the ideals $I_1, I_2\dots, I_n$
 are pairwise $\ast$-comaximal (by the $\ast$-comaximality of their radicals), it is
  enough to show that $I=(I_1\cap I_2\cap\dots \cap I_n)^\ast$.
 Let $M\in \starmax(R)$. If $I\sub M$, then $P_i\sub M$ for some $i$. But since $(P_i+A_i)^\ast =R$,
 $A_i\nsub M$ and so $I_{\mc F_i}\sub IR_M$.
 Hence $I\sub I_1\cap I_2\cap\dots \cap I_n \sub
  I_{\mc F_1}\cap I_{\mc F_2}\cap\dots \cap I_{\mc F_n} \sub \bigcap_{M\in
   \starmax(R), I\sub M} IR_M =I$ (since $I$ is a $\ast$-ideal).
 It follows that $I=(I_1\cap I_2\cap\dots \cap I_n)^\ast$.

(iii) $\ra$ (ii) is clear.

(ii) $\ra$ (i).  If $I = (Q_{1}Q_{2}\dots  Q_{n})^\ast$, a prime minimal
over $I$ must be minimal over one of the  $Q_{i}$'s. Hence $I$ has finitely many minimal
primes.

\smallskip
 To finish, it is enough to observe that if $I$ is $\ast$-invertible, the $Q_{i}$'s are also $\ast$-invertible.
 \end{proof}

\begin{prop} \label{treed} Let $R$ be a domain and let $\ast$ be a  star operation of finite type on $R$.
Assume that each nonzero principal ideal $X\subneq R$ can be written
in the form $X=(X_1X_2\ldots X_n)^\ast$, where the
$X_i$'s are pairwise $\ast$-comaximal $\ast$-packets. Then $\starspec(R)$ is treed.
\end{prop}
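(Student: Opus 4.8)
The plan is to argue by contradiction, using the fact that, since $\ast$ is of finite type, $\starspec(R)$ is treed precisely when no $\ast$-maximal ideal contains two incomparable $\ast$-primes (two $\ast$-primes $P,Q$ are $\ast$-comaximal iff $P+Q$ lies in no $\ast$-maximal ideal). So I would suppose that $P$ and $Q$ are incomparable $\ast$-primes that are not $\ast$-comaximal, i.e. $(P+Q)^\ast \subneq R$. Then $(P+Q)^\ast$ lies in some $\ast$-maximal ideal $M$, and $P,Q \sub M$. Because $P$ and $Q$ are incomparable I may choose $a \in P \sm Q$ and $b \in Q \sm P$; both are nonzero since $0$ lies in every ideal.

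The core idea is to feed the single principal ideal generated by $x=ab$ into the hypothesis. Here $x \ne 0$ and $x \in P$, so $xR \subneq R$; write $xR = (X_1 \cdots X_n)^\ast$ with the $X_i$ pairwise $\ast$-comaximal $\ast$-packets and set $P_i = \rad(X_i)$. First I would check that the minimal primes of $xR$ are exactly $P_1,\dots,P_n$: since each $P_i$ is a $\ast$-prime containing $X_i$, one has $xR=(\prod_i X_i)^\ast \sub P_i$, together with $\prod_i X_i \sub xR$, whence $\rad(xR)=\bigcap_i P_i$. Moreover the $P_i$ are pairwise $\ast$-comaximal, because $(X_i+X_j)^\ast=R$ forces $(P_i+P_j)^\ast=R$. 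The decisive consequence is that at most one $P_i$ can be contained in $M$: if $P_i,P_j \sub M$ with $i \ne j$, then $P_i+P_j \sub M$ contradicts $(P_i+P_j)^\ast=R$.

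Now $x \in P$ and $x \in Q$, so $P$ contains some minimal prime $P_i$ of $xR$ and $Q$ contains some minimal prime $P_j$; as $P_i,P_j \sub M$, the previous step forces $i=j$. Thus there is a single minimal prime $N:=P_i=P_j$ of $xR$ with $N \sub P \cap Q$. Since $ab=x \in N$ and $N$ is prime, either $a \in N \sub Q$ or $b \in N \sub P$; the first contradicts $a \notin Q$ and the second contradicts $b \notin P$. This contradiction shows $(P+Q)^\ast=R$, so $\starspec(R)$ is treed. The only real work, and the step I expect to be the main (though mild) obstacle, is the bookkeeping in the second paragraph: verifying that the minimal primes of $xR$ are exactly the radicals $P_i$ and that these are pairwise $\ast$-comaximal, so that $M$ can contain at most one of them. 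Once this is in hand, the prime-avoidance contradiction arising from $a \in P\sm Q$ and $b \in Q\sm P$ is immediate.
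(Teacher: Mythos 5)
Your proof is correct and follows essentially the same route as the paper: choose $x\in P\sm Q$, $y\in Q\sm P$, factor $xyR$ into pairwise $\ast$-comaximal $\ast$-packets, observe that the common $\ast$-maximal ideal $M$ can contain at most one factor, and derive a contradiction from the primality of that factor's radical. The only cosmetic difference is that you track the radicals $P_i=\rad(X_i)$ where the paper tracks the packets $X_i$ themselves; the argument is the same.
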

\begin{proof} We proceed as in the case of
$\ast=d$ established in \cite[Theorem 1]{BH}. Assume that $P$ and
$P'$ are two incomparable $\ast$-primes contained in the same
$\ast$-maximal ideal $M$. Let $x \in P \sm P'$ and $y \in P' \sm P$.
Write $xyR = (X_{1}X_{2}\dots  X_{n})^\ast$, where the $X_{i}'s$ are
pairwise $\ast$-comaximal $\ast$-packets. Hence $M$ contains only
one of the $X_{i}'s$, say $X_{1}$. On the other hand, $xyR \sub P
\cap P' \sub M$; hence $X_{1} \sub P \cap P'$. Since $X_{1}$ has
prime radical, then either $x$ or $y$ belongs to $P \cap P'$, a
contradiction. It follows that $\starspec(R)$ is treed.
\end{proof}

The following theorem was proven for $\ast=d$ in  \cite[Theorem 2]{BH}.

\begin{thm}\label{fact1}
    Let $R$ be a domain and let $\ast$ be a  star operation of finite type on $R$. The following conditions are equivalent:
\begin{itemize}

    \item[(i)] $\starspec(R)$ is treed and
    each proper $\ast$-ideal of $R$ has finitely many minimal primes;

\item[(ii)] Each proper $\ast$-ideal $I$ can be (uniquely) written in the form $I =
(Q_{1}Q_{2}\dots  Q_{n})^\ast$, where the $Q_{i}$'s are pairwise
$\ast$-comaximal $\ast$-ideals with prime radical.

\end{itemize}
Under any of these conditions, $R$ is a $\ast$-URD.
\end{thm}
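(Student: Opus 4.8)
The plan is to derive both implications and the final $\ast$-URD assertion essentially as corollaries of Propositions \ref{fact0} and \ref{treed} together with the uniqueness Theorem \ref{uniqueness}; the only genuinely new work lies in one implication, where I must upgrade the prime-radical factors supplied by (ii) to $\ast$-packets.

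For (i) $\ra$ (ii) I would argue directly. Let $I\subneq R$ be a proper $\ast$-ideal. By hypothesis $\starspec(R)$ is treed and $I$ has finitely many minimal primes, so the implication (i) $\ra$ (iii) of Proposition \ref{fact0} produces a factorization $I=(Q_1Q_2\dots Q_n)^\ast$ in which the $Q_i$ are pairwise $\ast$-comaximal $\ast$-ideals with prime radical. Uniqueness up to order of such a factorization is exactly Theorem \ref{uniqueness}. This gives (ii) with no further effort.

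For (ii) $\ra$ (i), the finiteness of the minimal primes of any proper $\ast$-ideal is immediate: in a factorization $I=(Q_1Q_2\dots Q_n)^\ast$, any prime minimal over $I$ contains some $Q_i$ and hence is the unique minimal prime of that $Q_i$ (its prime radical), so $I$ has at most $n$ minimal primes; this is the argument of (ii) $\ra$ (i) in Proposition \ref{fact0}. For treedness I would invoke Proposition \ref{treed}, whose hypothesis concerns factorizations of principal ideals into pairwise $\ast$-comaximal $\ast$-\emph{packets}. Here is the one point needing care, and the step I expect to be the main (albeit mild) obstacle: condition (ii), applied to each proper principal ideal $xR$ (a proper $\ast$-ideal, since $(xR)^\ast=xR$), yields $xR=(Q_1Q_2\dots Q_n)^\ast$ with the $Q_i$ pairwise $\ast$-comaximal of prime radical, but a priori these $Q_i$ are known only to have prime radical, not to be $\ast$-invertible. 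To remove this gap I would observe that $xR$ is $\ast$-invertible and that, setting $Z_i=\prod_{j\neq i}Q_j$, one has $(Q_iZ_i)^\ast=xR$ with $(Q_i+Z_i)^\ast=R$ by Lemma \ref{comax}. From $\ast$-invertibility of $xR=(Q_iZ_i)^\ast$, i.e. $((Q_iZ_i)(Q_iZ_i)^{-1})^\ast=R$, regrouping gives $(Q_i(Z_i(Q_iZ_i)^{-1}))^\ast=R$, so each $Q_i$ is $\ast$-invertible. Thus each $Q_i$ is a $\ast$-packet, Proposition \ref{treed} applies, and $\starspec(R)$ is treed.

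Finally, for the $\ast$-URD conclusion I would let $X\subneq R$ be a proper $\ast$-invertible $\ast$-ideal and invoke the (now available) condition (i): since $\starspec(R)$ is treed and $X$ has finitely many minimal primes, Proposition \ref{fact0} writes $X=(Q_1Q_2\dots Q_n)^\ast$ with the $Q_i$ pairwise $\ast$-comaximal, and its closing remark---using the $\ast$-invertibility of $X$---guarantees the $Q_i$ are $\ast$-packets. Theorem \ref{uniqueness} yields uniqueness up to order, which is precisely the definition of a $\ast$-URD.
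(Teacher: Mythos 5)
Your proof is correct and follows essentially the same route as the paper's (which simply cites Proposition \ref{treed} for treedness and then Propositions \ref{fact0} and Theorem \ref{uniqueness} for the factorization and its uniqueness). The one place you add genuine detail --- checking that the factors $Q_i$ of a principal ideal $xR$ under (ii) are $\ast$-invertible, via $(Q_i\cdot x^{-1}Z_i)^\ast=x^{-1}(Q_iZ_i)^\ast=R$, so that Proposition \ref{treed} applies as literally stated --- is a legitimate patch of a step the paper leaves implicit (its proof of Proposition \ref{treed} in fact only uses that the factors have prime radical, so either fix works).
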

\begin{proof}  If condition (ii) holds,
$\starspec(R)$ is treed by Proposition \ref{treed}. Then we can apply  Proposition \ref{fact0} and Theorem \ref{uniqueness}.
\end{proof}

If $\starspec(R)$ is treed and $R$ has $\ast$-finite character, then
each $\ast$-ideal $I\subneq R$ has only finitely many minimal
primes. Thus the following corollary is immediate:

\begin{cor}\label{corfact} Let $R$ be a domain and let $\ast$ be a  star operation of finite type on $R$
such that $\starspec(R)$ is treed and $R$ has $\ast$-finite
character. Then each $\ast$-ideal $I\subneq R$ can be (uniquely)
written in the form $I = (Q_{1}Q_{2}\dots  Q_{n})^\ast$, where the
$Q_{i}$'s are pairwise $\ast$-comaximal $\ast$-ideals with prime
radical.

In particular $R$ is a $\ast$-URD.
\end{cor}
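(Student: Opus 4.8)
The plan is to reduce the corollary to Theorem \ref{fact1} by verifying its hypothesis (i): namely, that \emph{every} proper $\ast$-ideal has only finitely many minimal primes. Treedness of $\starspec(R)$ is assumed outright, so the entire content of the argument lies in establishing this finiteness from $\ast$-finite character.

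First I would fix a proper $\ast$-ideal $I\subneq R$ and choose a nonzero element $x\in I$. By $\ast$-finite character, $x$ belongs to only finitely many $\ast$-maximal ideals, say $M_1,\dots,M_k$. Since $\ast$ is of finite type, every prime minimal over $I$ is a $\ast$-prime, hence a proper $\ast$-ideal, and so is contained in some $\ast$-maximal ideal. Because any minimal prime $P$ of $I$ contains $I$ and therefore $x$, the $\ast$-maximal ideal containing $P$ also contains $x$, and must be one of $M_1,\dots,M_k$. Thus every minimal prime of $I$ sits inside one of these finitely many $\ast$-maximal ideals.

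The key step is then to bound, using treedness, the number of minimal primes lying inside each $M_j$. The minimal primes of $I$ are pairwise incomparable, so if two distinct minimal primes $P,P'$ of $I$ were both contained in a single $M_j$, then $M_j$ would contain two incomparable $\ast$-primes, contradicting the assumption that $\starspec(R)$ is treed. Hence each $M_j$ contains at most one minimal prime of $I$, and consequently $I$ has at most $k$ minimal primes.

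With finiteness of the minimal primes established for every proper $\ast$-ideal, condition (i) of Theorem \ref{fact1} is satisfied, and that theorem immediately yields the (unique) factorization $I=(Q_1Q_2\dots Q_n)^\ast$ into pairwise $\ast$-comaximal $\ast$-ideals with prime radical, together with the conclusion that $R$ is a $\ast$-URD. I expect no genuine obstacle beyond the counting argument above; the only point requiring care is ensuring that the minimal primes of $I$ are actually $\ast$-primes, so that treedness applies to them, and this is precisely where the finite-type hypothesis on $\ast$ enters.
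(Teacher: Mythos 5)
Your proposal is correct and follows exactly the route the paper intends: the remark preceding the corollary asserts that treedness plus $\ast$-finite character force every proper $\ast$-ideal to have finitely many minimal primes, and then Theorem \ref{fact1} applies; your argument simply supplies the details of that assertion (minimal primes are $\ast$-primes since $\ast$ has finite type, they all lie over a fixed nonzero $x\in I$ and hence in one of the finitely many $\ast$-maximal ideals containing $x$, and treedness allows at most one incomparable minimal prime per $\ast$-maximal ideal). No gap; this is the paper's proof made explicit.
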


Since the $t$-Spectrum of a PVMD is treed, we also have the following generalization of  Theorem \ref{Theorem B}.

\begin{thm} \label{PVMD}Let $R$ be a PVMD. The following conditions are
equivalent for a  $t$-ideal $I\subneq R$:
\begin{itemize}

    \item[(i)] $I$ has finitely many minimal
primes;

\item[(ii)] $I =
(Q_{1}Q_{2}\dots  Q_{n})_{t}$, where the $Q_{i}$'s are pairwise
$t$-comaximal $t$-ideals with prime radical.
\end{itemize}
\end{thm}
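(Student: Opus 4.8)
The plan is to recognize this statement as an immediate specialization of Proposition \ref{fact0} to the case $\ast = t$, using only the structural fact that the $t$-spectrum of a PVMD is a tree. Indeed, the work of establishing the decomposition from finitely many minimal primes (and its converse) has already been carried out in full generality in Proposition \ref{fact0}; the sole task here is to check that a PVMD supplies the hypotheses of that proposition when $\ast = t$.

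First I would note that $t$ is a star operation of finite type, so the general framework applies. The key hypothesis of Proposition \ref{fact0} is that $\starspec(R)$ be treed, and for $\ast = t$ this is precisely observation (II) of Section \ref{one}: the set of prime $t$-ideals of a PVMD forms a tree under inclusion, so any two incomparable $t$-primes cannot lie in a common $t$-maximal ideal, i.e.\ they are $t$-comaximal. Thus $\tspec(R)$ is treed, and the hypotheses of Proposition \ref{fact0} are satisfied.

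With these observations in hand, I would simply invoke Proposition \ref{fact0} with $\ast = t$. Condition (i) of the present theorem is verbatim condition (i) there, and condition (ii) here---that $I = (Q_{1}Q_{2}\dots Q_{n})_{t}$ with the $Q_i$ pairwise $t$-comaximal $t$-ideals having prime radical---is verbatim condition (iii) there. Hence the asserted equivalence is exactly the equivalence (i) $\Leftrightarrow$ (iii) of Proposition \ref{fact0}, and nothing further is required.

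I do not anticipate any genuine obstacle, since the substantive arguments (the localizing-system construction producing the $I_i$ with $\rad(I_i) = P_i$, and the comaximal reassembly $I = (I_1 \cap \dots \cap I_n)^\ast$) are encapsulated in Proposition \ref{fact0}. The one point worth emphasizing---and the reason this is a proper generalization of Theorem \ref{Theorem B}---is that no $t$-invertibility of $I$ is assumed: Proposition \ref{fact0} delivers the factorization for an arbitrary $t$-ideal $I \subsetneq R$, with $\ast$-invertibility entering only in the final clause (which would force the $Q_i$ to be packets). Thus the passage from Theorem \ref{Theorem B} to Theorem \ref{PVMD} is exactly the passage from the $t$-invertible setting to the general $t$-ideal setting, mediated by the treed property of $\tspec(R)$.
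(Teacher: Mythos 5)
Your proposal is correct and is precisely the paper's own argument: the theorem is stated without a separate proof, being introduced by the remark that the $t$-spectrum of a PVMD is treed, so that the equivalence is exactly (i) $\Leftrightarrow$ (iii) of Proposition \ref{fact0} with $\ast = t$. Your additional observations (that $t$ is of finite type, and that no $t$-invertibility of $I$ is needed) are accurate and consistent with the paper.
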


We now  give a complete characterization of $\ast$-URD's.

\begin{lemma}\label{facsp} Let $R$ be an integral domain and $\ast$ a star operation
 on $R$. Let  $X=(X_1X_2\ldots X_n)^\ast$ and $Y=(Y_1Y_2\ldots Y_m)^\ast$ be
 $\ast$-factorizations of the $\ast$-ideals $X$ and $Y$ into $\ast$-ideals of
 $R$ such that $(X_i+ X_j)^\ast=(Y_i+Y_j)^\ast =R$ for $i\not=j$.  Then
 $(X+Y)^\ast=(\prod_{i, j\ge1}(X_i+ Y_j))^\ast$.
\end{lemma}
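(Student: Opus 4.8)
The plan is to reduce everything to a single auxiliary identity: for any nonzero ideal $A$ of $R$ and any finite family $B_1,\dots,B_m$ of pairwise $\ast$-comaximal ideals,
\begin{equation*}
\left(\prod_{j=1}^m (A+B_j)\right)^\ast = \left(A+\prod_{j=1}^m B_j\right)^\ast.
\tag{$\dagger$}
\end{equation*}
This is the star-operation analogue of the GCD identity $\GCD(a,\prod b_j)=\prod\GCD(a,b_j)$, valid when the $b_j$ are pairwise coprime, with $\ast$-addition in the role of GCD. I would first dispatch the two-factor case: when $(B_1+B_2)^\ast=R$, expand $(A+B_1)(A+B_2)=A(A+B_1+B_2)+B_1B_2$ and note $(A+B_1+B_2)^\ast=R$ since it contains $(B_1+B_2)^\ast=R$. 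Applying the $\ast$-multiplication rule to $A(A+B_1+B_2)$ and then $\ast$-addition gives $((A+B_1)(A+B_2))^\ast=(A^\ast+B_1B_2)^\ast=(A+B_1B_2)^\ast$.

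I would then obtain $(\dagger)$ by induction on $m$: writing $\prod_{j=1}^m(A+B_j)=(A+B_m)\cdot\prod_{j=1}^{m-1}(A+B_j)$, I apply the induction hypothesis to the second factor (replacing it by its $\ast$-closure is legitimate by the $\ast$-multiplication rule) and then the two-factor case with the roles of $B_1$ and $B_2$ played by $\prod_{j<m}B_j$ and $B_m$. The comaximality $(\prod_{j<m}B_j+B_m)^\ast=R$ needed to invoke the two-factor case is exactly Lemma \ref{comax}.

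With $(\dagger)$ in hand the lemma follows by applying it twice. Since $X=(\prod_i X_i)^\ast\sub X_i$ and $Y=(\prod_j Y_j)^\ast\sub Y_j$, fixing $i$ and applying $(\dagger)$ to the pairwise $\ast$-comaximal family $\{Y_j\}_j$ with $A=X_i$ yields $(\prod_j(X_i+Y_j))^\ast=(X_i+\prod_j Y_j)^\ast=(X_i+Y)^\ast$. Grouping the double product by $i$ and replacing each inner block by its $\ast$-closure (again the $\ast$-multiplication rule) gives
\[
\left(\prod_{i,j}(X_i+Y_j)\right)^\ast=\left(\prod_i(X_i+Y)\right)^\ast.
\]
I then apply $(\dagger)$ once more, now to the family $\{X_i\}_i$ with $A=Y$, obtaining $(\prod_i(Y+X_i))^\ast=(Y+\prod_i X_i)^\ast=(X+Y)^\ast$.

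The one point needing care is that $(\dagger)$ may be applied in the final step: it only requires the family $\{X_i\}_i$ to be pairwise $\ast$-comaximal, which is part of the hypothesis, so this is immediate. Thus the substantive work is entirely in establishing $(\dagger)$, namely the two-factor identity and the bookkeeping of $\ast$-closures inside products; the double application is then routine. I note that finite type of $\ast$ is never used, consistent with the full generality of the statement.
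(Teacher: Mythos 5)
Your proposal is correct and follows essentially the same route as the paper: the paper proves the identical two-factor identity $((X+Y_1)(X+Y_2))^\ast=(X+Y_1Y_2)^\ast$ under $(Y_1+Y_2)^\ast=R$ (by the same expansion, grouped as $X^2+X(Y_1+Y_2)+Y_1Y_2$) and then simply states that the lemma ``follows by double induction.'' Your identity $(\dagger)$ and its two applications are exactly that double induction spelled out, including the correct appeal to Lemma \ref{comax} for the comaximality needed in the inductive step.
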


\begin{proof}  Let $X$, $Y_1$, $Y_2$ be
ideals of $R$ such that $(Y_1+Y_2)^\ast=R$. Then $(X+Y_1Y_2)^\ast=((X+Y_1)(X+
Y_2))^\ast$. In fact
we have $(X+Y_1)(X+Y_2)=X^2+X(Y_1+Y_2)+Y_1Y_2$. Hence
$((X+Y_1)(X+Y_2))^\ast=((X^2)^\ast+(X(Y_1+Y_2))^\ast+(Y_1Y_2)^\ast)^\ast=
((X^2)^\ast+X^\ast+(Y_1Y_2)^\ast)^\ast=(X+Y_1Y_2)^\ast$.

The proof now follows by double induction.
\end{proof}

\begin{lemma}\label{min} Let $R$ be an integral domain and $\ast$
a star operation of finite type on $R$ such that
$\starspec(R)$ is treed. Let $I=(a_1R+a_2R+\ldots+a_nR)^\ast$
be a $\ast$-finite $\ast$-ideal and let $P$ be a prime ideal such that $P\supseteq
I$. Then $P$ is minimal over $I$ if and only if $P$ is minimal over some
$a_iR$.
\end{lemma}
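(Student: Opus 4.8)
The plan is to prove the equivalence by treating the two implications separately: the reverse direction is purely formal, while the forward direction is where the treedness hypothesis does the real work.

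For the (easy) reverse direction, suppose $P$ is minimal over some $a_iR$. Since $a_iR\sub a_1R+\dots+a_nR\sub I\sub P$, any prime $Q$ with $I\sub Q\sub P$ also satisfies $a_iR\sub Q\sub P$, and minimality of $P$ over $a_iR$ forces $Q=P$. Hence $P$ is minimal over $I$. Note that this step uses neither the finiteness of the generating set nor treedness.

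For the forward direction, assume $P$ is minimal over $I$. Because $\ast$ is of finite type and $I$ is a $\ast$-ideal, $P$ is a $\ast$-prime. For each $i$ (with $a_i\neq 0$), from $a_iR\sub I\sub P$ I would choose a minimal prime $P_i$ of $a_iR$ with $P_i\sub P$; such a $P_i$ exists because every prime containing an ideal contains a minimal prime over it, and each $P_i$ is a $\ast$-prime, being minimal over a nonzero principal ideal for a finite-type star operation.

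The crux is to turn the finite collection $\{P_1,\dots,P_n\}$ into a chain. Since $P_i,P_j\sub P$ and $P$ is a proper $\ast$-ideal, we have $(P_i+P_j)^\ast\sub P\neq R$, so $P_i$ and $P_j$ are not $\ast$-comaximal; treedness of $\starspec(R)$ (incomparable $\ast$-primes are $\ast$-comaximal) then forces them to be comparable. Thus the $P_i$ form a finite totally ordered set and admit a largest element $P_k$. Since $a_i\in P_i\sub P_k$ for every $i$, we get $I=(a_1R+\dots+a_nR)^\ast\sub P_k$ (as $P_k$ is a $\ast$-ideal), and combined with $P_k\sub P$ the minimality of $P$ over $I$ yields $P=P_k$. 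As $P_k$ is minimal over $a_kR$, this gives the desired conclusion. I expect the main obstacle to be exactly this chain step: recognizing that all the $P_i$ lie inside the single $\ast$-prime $P$ so that treedness applies, and then exploiting the finiteness of the generating set to extract a top element that must coincide with $P$.
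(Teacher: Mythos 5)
Your proof is correct and follows essentially the same route as the paper's: pick a minimal prime $P_i$ of each $a_iR$ inside $P$, use treedness to see that the $P_i$'s form a finite chain, and identify $P$ with the top element. You merely spell out the details the paper leaves implicit (that each $P_i$ and $P$ are $\ast$-primes, and why non-$\ast$-comaximality inside $P$ forces comparability), which is fine.
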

\begin{proof} Assume that $P$ is a minimal ($\ast$-)prime of $I$. Let
 $P_1, P_2,\ldots, P_n$ be a set of minimal primes of
  $a_1, a_2,\ldots, a_n$, respectively, such that $P_i\sub P$ for
  each $i$. Since $\starspec(R)$ is treed, the set $\{P_i\}_i$ is
  a finite chain. Up to order, we may assume that $P_n$ is the maximal element of this
  chain. Then $I\sub P_n\sub P$, and hence $P=P_n$. Thus $P$ is a
  minimal prime over $a_nR$. The converse is clear.
\end{proof}

\begin{thm}\label{*URD} Let $R$ be an integral domain and $\ast$
a star operation of finite type on $R$. Then the following conditions are
equivalent:
\begin{itemize}
\item[(i)] $R$ is a $\ast$-URD;

\item[(ii)]  Each $\ast$-ideal of finite type $X\subneq R$ can be (uniquely) written
in the form $X=(X_1X_2\ldots X_n)^\ast$, where each
$X_i$ is a $\ast$-ideal with  prime radical and the $X_i$'s
 are pairwise $\ast$-comaximal $\ast$-ideals;

\item[(iii)] Each nonzero principal ideal $X\subneq R$ can be (uniquely) written
in the form $X=(X_1X_2\ldots X_n)^\ast$, where the
$X_i$'s are pairwise $\ast$-comaximal $\ast$-packets;

\item[(iv)] $\starspec(R)$ is treed and each $\ast$-ideal of
finite type $X\subneq R$ has only finitely many minimal primes;

\item[(v)] $\starspec(R)$ is treed and each $\ast$-invertible $\ast$-ideal
$X\subneq R$ has only finitely many minimal primes;

\item[(vi)]  $\starspec(R)$ is treed and each nonzero principal ideal
 $X\subneq R$ has only finitely many minimal primes.
 \end{itemize}
 \end{thm}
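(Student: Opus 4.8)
The plan is to prove Theorem~\ref{*URD} by establishing a cycle of implications among the six conditions, leaning heavily on the machinery already assembled in this section. The natural strategy is to split the equivalences into two groups: the ``factorization'' conditions (i), (ii), (iii) and the ``structural'' conditions (iv), (v), (vi), then link the two groups. I would aim to prove the chain (ii) $\Rightarrow$ (i) $\Rightarrow$ (iii) $\Rightarrow$ (vi) $\Rightarrow$ (iv) $\Rightarrow$ (v) $\Rightarrow$ (ii), or some convenient rearrangement, so that each single arrow is short given the earlier results.

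First I would dispatch the easy structural implications. The descent (iv) $\Rightarrow$ (v) $\Rightarrow$ (vi) is immediate, since every nonzero principal ideal is $\ast$-invertible of finite type and every $\ast$-invertible $\ast$-ideal is $\ast$-finite; the treedness hypothesis is simply carried along. The reverse ascent (vi) $\Rightarrow$ (iv) is where Lemma~\ref{min} does the work: given treedness and finiteness of minimal primes over principal ideals, any $\ast$-finite $\ast$-ideal $X=(a_1R+\dots+a_nR)^\ast$ has its minimal primes drawn from among the minimal primes of the finitely many $a_iR$, hence finitely many in all. This closes the loop (iv) $\Leftrightarrow$ (v) $\Leftrightarrow$ (vi).

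Next I would connect the structural conditions to the factorization conditions. The implication (iv) $\Rightarrow$ (ii) is essentially Proposition~\ref{fact0} applied to $\ast$-finite $\ast$-ideals: treedness plus finitely many minimal primes yields the comaximal factorization into $\ast$-ideals with prime radical, and Theorem~\ref{uniqueness} supplies the uniqueness. For (ii) $\Rightarrow$ (i), I note that a $\ast$-invertible $\ast$-ideal is $\ast$-finite, so the factorization of (ii) applies to it; by Proposition~\ref{fact0} the factors of a $\ast$-invertible ideal are themselves $\ast$-invertible with prime radical, i.e.\ $\ast$-packets, giving precisely the definition of a $\ast$-URD. The arrow (i) $\Rightarrow$ (iii) restricts the $\ast$-URD factorization to principal ideals, which are $\ast$-invertible, so they factor into mutually $\ast$-comaximal $\ast$-packets as required.

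The step I expect to be the crux is (iii) $\Rightarrow$ (vi), because this is where treedness must be \emph{extracted} rather than assumed: Proposition~\ref{treed} is exactly the tool, showing that a comaximal $\ast$-packet factorization of every principal ideal forces $\starspec(R)$ to be treed, while the finiteness of minimal primes over a principal ideal follows since each prime minimal over $X$ is minimal over one of the finitely many packet factors. Once (iii) $\Rightarrow$ (vi) is in hand, the cycle (vi) $\Rightarrow$ (iv) $\Rightarrow$ (ii) $\Rightarrow$ (i) $\Rightarrow$ (iii) closes all six conditions, and the uniqueness clauses throughout are guaranteed by Theorem~\ref{uniqueness}. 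The main subtlety to watch is keeping the distinction between $\ast$-finite and $\ast$-invertible ideals straight, so that Proposition~\ref{fact0} is invoked with the correct hypothesis at each juncture; the treedness of $\starspec(R)$ is the hinge that every nontrivial arrow turns on.
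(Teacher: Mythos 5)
Your proposal is correct, and all but one of your arrows coincide with the paper's: the easy descents, (vi) $\Rightarrow$ (iv) via Lemma~\ref{min}, (iii) $\Rightarrow$ (vi) via Proposition~\ref{treed} together with Proposition~\ref{fact0}, and the use of Theorem~\ref{uniqueness} for all uniqueness claims are exactly as in the paper. The one place you diverge is in how condition (ii) is reached. The paper proves (iii) $\Rightarrow$ (ii) directly: writing $X=(aR+bR)^\ast$, it invokes Lemma~\ref{facsp} to expand $X$ as the $\ast$-product of the pairwise sums $(A_i+B_j)^\ast$ of the packet factors of $aR$ and $bR$, then uses treedness to check that these sums have prime radical and are pairwise $\ast$-comaximal, with an induction on the number of generators for the general case. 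You instead route (iv) $\Rightarrow$ (ii) by applying Proposition~\ref{fact0} to an arbitrary proper $\ast$-finite $\ast$-ideal, which is legitimate since such an ideal is a proper $\ast$-ideal with finitely many minimal primes under (iv); this closes the cycle just as well and bypasses Lemma~\ref{facsp} entirely. Your route is the shorter one, since it reuses a proposition already proved; the paper's route earns the extra effort of Lemma~\ref{facsp} by exhibiting the factorization of $(aR+bR)^\ast$ explicitly in terms of the factorizations of $aR$ and $bR$, which is information of independent interest (and Lemma~\ref{facsp} is in any case needed nowhere else for the equivalences). The only point to keep straight in your version---and you do flag it---is that Proposition~\ref{fact0} is being applied to $\ast$-finite (not necessarily $\ast$-invertible) ideals, with the packet conclusion reserved for the $\ast$-invertible case needed in (ii) $\Rightarrow$ (i).
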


 \begin{proof}
The  implications (ii) $\ra$ (i) $\ra$ (iii) and
(iv) $\ra$ (v) $\ra$ (vi) are clear.

(iii) $\ra$ (vi). By Proposition \ref{treed}, $\starspec(R)$ is treed.
Then, by Proposition \ref{fact0}, each nonzero principal ideal
$X$ has finitely many minimal primes.

(vi) $\ra$ (iv)  follows from Lemma \ref{min}.

 (vi) $\ra$ (iii). The factorization follows from Proposition \ref{fact0} and the uniqueness follows from
 Theorem \ref{uniqueness}.

 (iii) $\ra$ (ii). We restrict ourselves to the case where
$X=(aR+bR)^\ast$; the general case follows by induction on the
number of generators of $X$ as a $\ast$-finite $\ast$-ideal. Let
$aR=(A_1A_2\ldots A_n)^\ast$ and $bR=(B_1B_2\ldots B_m)^\ast$ be the
$\ast$-factorizations into $\ast$-ideals as in (iii). By Lemma
\ref{facsp}, $X=(\prod_{i, j\ge1}(A_i+ B_j)^\ast)^\ast$. If
$(A_i+B_j)^\ast\not= R$ for some $i, j$, then $\rad(A_i)\sub
\rad(B_j)$ or $\rad(B_j)\sub \rad(A_i)$ (since, by (iii)$\ra$(vi),
$\starspec(R)$ is treed), and hence
$\rad((A_i+B_j)^\ast)=\rad(A_i)\vee \rad(B_j)$ is a prime ideal. To
conclude, we show that the $\ast$-ideals $X_{ij}=(A_i+B_j)^\ast$ are
pairwise $\ast$-comaximal. Let $i, j, k$ and $l$ such that
$(X_{ij}+X_{kl})^\ast\not=R$. Since $\starspec(R)$ is treed, the
minimal primes over $A_i, B_j, A_k$ and $B_l$, respectively, form a
chain, this forces that $i=k$ and $j=l$.
\end{proof}

In the case $\ast=d$, the equivalence (iii) $\lra$ (vi) of Theorem
\ref{*URD} was proved in  \cite[Theorem 1]{BH}, without uniqueness
consideration. When $\ast=t$, we get a complete characterization of
URD's.

\begin{cor}\label{URD} Let $R$ be an integral domain. Then the following conditions are
equivalent:\begin{itemize}
\item[(i)] $R$ is a URD;
\item[(ii)] Each $t$-ideal of finite type $X\subneq R$ can be (uniquely) written
in the form $X=(X_1X_2\ldots X_n)_t$, where each $X_i$ is
a $t$-ideal with  prime radical and the $X_i$'s
 are pairwise $t$-comaximal $t$-ideals;
\item[(iii)] Each nonzero principal ideal $X\subneq R$ can be (uniquely) written
in the form $X=(X_1X_2\ldots X_n)_t$, where the $X_i$'s
are pairwise $t$-comaximal packets;
\item[(iv)] $\tspec(R)$ is treed and each $t$-ideal of
finite type $X\subneq R$ has only finitely many minimal primes;
\item[(v)] $\tspec(R)$ is treed and each $t$-invertible $t$-ideal $X\subneq R$
has only finitely many minimal primes;
\item[(vi)]  $\tspec(R)$ is treed and each nonzero principal ideal $X\subneq R$
 has only finitely many minimal primes.
\end{itemize}
\end{cor}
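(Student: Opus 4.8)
The plan is to obtain this corollary as the specialization of Theorem~\ref{*URD} to the star operation $\ast = t$. The first step is to recall from the introduction that the $t$-operation is a star operation of finite type, being the operation of finite type associated to the $v$-operation; hence the standing hypothesis of Theorem~\ref{*URD}, namely that $\ast$ be of finite type, is met by $\ast = t$.

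The second step is to invoke the naming conventions fixed in the introduction. There a $t$-packet was declared to be called simply a packet, and a $t$-URD was declared to be called a URD. Under these conventions, each of the six conditions (i)--(vi) of the present corollary is obtained from the correspondingly numbered condition of Theorem~\ref{*URD} by the purely notational substitutions $\ast \mapsto t$, $(\cdot)^\ast \mapsto (\cdot)_t$, and $\starspec \mapsto \tspec$, together with the renaming of $\ast$-packets as packets. For instance, condition (iii) of Theorem~\ref{*URD}, asserting that every nonzero principal ideal factors as a $\ast$-product of pairwise $\ast$-comaximal $\ast$-packets, becomes precisely condition (iii) here once $\ast$-packets are renamed packets and $(\cdot)^\ast$ is written $(\cdot)_t$.

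Since the transcription is verbatim, no additional argument is required and I anticipate no genuine obstacle: the equivalence of (i)--(vi), as well as the concluding assertion that $R$ is a URD, follow immediately by applying Theorem~\ref{*URD} with $\ast = t$. The only point meriting a word of care is purely terminological, namely confirming that the definitions of packet and of URD set in the introduction coincide with the $t$-specializations of $\ast$-packet and $\ast$-URD, which they do by construction.
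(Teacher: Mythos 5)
Your proposal is correct and coincides with the paper's treatment: the paper presents this corollary as the immediate specialization of Theorem~\ref{*URD} to $\ast=t$, with no further argument beyond the terminological identifications (packet $=$ $t$-packet, URD $=$ $t$-URD) that you verify.
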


\begin{rem}\label{wspectreed} (1). For a star operation $\ast$ of finite type, when the $\ast$-class group  is zero (that is each $\ast$-invertible $\ast$-ideal is principal)  a $\ast$-URD
behaves like a GCD URD, i.e. every proper principal ideal is (uniquely)
expressible as a product of finitely many $t$-comaximal principal ideals with a unique minimal prime. Thus a UFD is a GCD URD. However a UFD is not always a $d$-URD, since its spectrum need not be treed (just take $\mathbb Z[X]$).

(2). By using the characterizations of $\ast$-URD's given in
Theorem \ref{*URD}, we get that, for any domain $R$,  $\tspec(R)$ is treed if and only if  $\wspec(R)$ is treed.

For this,  recall
that $\wmax(R)=\tmax(R)$ and that, if $M$ is a $t$-maximal ideal, all
the primes contained in $M$ are $w$-primes \cite{WM}. Now, assume that $\tspec(R)$ is treed and let $M\in \tmax(R)$. Then $\tspec(R_M)$ is linearly ordered, because any $t$-prime of $R_M$ contracts to a $t$-prime of $R$ contained in $M$. Hence each principal ideal of $R_M$ has a unique minimal ($t$-)prime and so is a $d$-packet. It follows that $R_M$ is trivially a $d$-URD and in
particular, $\Spec(R_M)$ is treed. Hence $\wspec(R)$ is treed.
Conversely, since $t$-ideals are $w$-ideals, if $\wspec(R)$ is treed, also $\tspec(R)$ is treed.
\end{rem}

We now show that a  $\ast$-URD $R$ whose $\ast$-packets are $\ast$-powers of $\ast$-prime
ideals is indeed a Krull domain. For $\ast=d$ this implies that  $R$ is a Dedekind domain \cite[Theorem 9]{BH}.

\begin{thm} Let $R$ be an integral domain and $\ast$ a star operation
of finite type on $R$. Then the following conditions are equivalent:
\begin{itemize}
\item[(i)] $R$ is a $\ast$-URD and each $\ast$-packet is a $\ast$-power of a $\ast$-prime ideal;
\item[(ii)] $R$ is a Krull domain and $\ast=t$.
\end{itemize}
\end{thm}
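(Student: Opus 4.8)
The plan is to prove the equivalence by showing each direction, and the heart of the matter will be the forward direction (i) $\ra$ (ii), since (ii) $\ra$ (i) should be essentially the standard Krull-domain structure theory. Let me sketch both.

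\medskip
\textbf{The direction (ii) $\ra$ (i).} Assume $R$ is a Krull domain with $\ast=t$. In a Krull domain every $t$-prime has height one, the $t$-maximal ideals are exactly the height-one primes, and $R$ has $t$-finite character; moreover every $t$-invertible $t$-ideal $I$ factors (via the divisor class group / the isomorphism of the $t$-class monoid with the free abelian group on height-one primes) as $I = (P_1^{e_1}\cdots P_k^{e_k})_t$ with the $P_i$ distinct height-one primes. First I would observe that $\tspec(R)$ is trivially treed (height-one primes are pairwise incomparable, hence pairwise $t$-comaximal) and that $t$-finite character forces each $t$-ideal to have finitely many minimal primes; by Corollary \ref{corfact} (applied with $\ast=t$), $R$ is a $t$-URD, i.e. a URD. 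It then remains to identify the $t$-packets: a $t$-invertible $t$-ideal with a unique minimal prime $P$ must, by the unique factorization just recalled, be a pure $t$-power $(P^{e})_t$ of the single height-one prime $P$ lying over it. This gives (i).

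\medskip
\textbf{The direction (i) $\ra$ (ii).} This is the substantive half. Assume $R$ is a $\ast$-URD in which every $\ast$-packet equals $(P^{e})^\ast$ for some $\ast$-prime $P$. The first goal is to show $R$ has $\ast$-dimension one, i.e. every $\ast$-prime is $\ast$-maximal (equivalently, minimal, forming a chain-free spectrum of height one). Here is the key mechanism: take a $\ast$-prime $Q$ and a $\ast$-prime $P \subsetneq Q$ properly contained in it; pick $x \in Q \sm P$ and consider the principal ideal $xR$. By the $\ast$-URD property $xR$ factors into $\ast$-comaximal packets, and the packet whose radical is $Q$ is, by hypothesis, some $(Q^{\prime e})^\ast$ with $\rad = Q$. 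The plan is to use Lemma \ref{lemma2}-style behavior (the analogue here is the failure of a prime to sit inside all powers) together with the fact that a packet is a power of its associated prime to derive that $P$ cannot be a nonzero prime strictly below $Q$. Concretely, if $P\subsetneq Q$ then choosing $y\in P$ and comparing the packet $(Q^{e})^\ast \supseteq xR$ against powers of $P$ should contradict $P$ being a $\ast$-prime distinct from $Q$; this localizes the argument at $Q$ and exploits that $(Q^{e})^\ast$ has radical exactly $Q$ while the chain $P\subsetneq Q$ would force a second minimal prime or a non-power structure. Once $\ast$-dimension one is established, I would show that every $\ast$-invertible $\ast$-ideal is a $\ast$-product of $\ast$-powers of height-one $\ast$-primes (immediate from the URD factorization plus the packet hypothesis), and that $\ast$-finite character holds (each principal ideal having finitely many minimal primes, all of height one, with the tree structure degenerating to an antichain). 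Finally I would invoke a standard characterization---a domain is Krull iff it is a PVMD of $t$-dimension one with $t$-finite character, equivalently iff every $t$-invertible $t$-ideal factors into powers of height-one primes---to conclude $R$ is Krull and, since the operative operation delivering these factorizations is $t$ (the coarsest finite-type star operation through which all this passes), that $\ast=t$.

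\medskip
\textbf{The main obstacle.} The delicate step is proving $\ast$-dimension one in the direction (i) $\ra$ (ii): one must rule out a chain $P \subsetneq Q$ of $\ast$-primes using only that \emph{packets are prime powers}, without a priori knowing $R$ is a PVMD (so localizations need not be valuation domains and I cannot directly quote Lemma \ref{lemma2}). The argument must therefore be carried out purely $\ast$-ideal-theoretically: build a principal ideal landing in $Q$ but witnessing, through its forced packet factorization $(Q^{e})^\ast$ of prime-power shape, that no proper $\ast$-prime $P$ can lie below $Q$—otherwise $P$ would itself be a minimal prime of some principal ideal whose packet is a power of $P$, producing a comparability clash with the packet at $Q$ that contradicts $\ast$-comaximality of distinct packets. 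Pinning down this clash cleanly, and then correctly matching the resulting factorization-into-prime-powers against the textbook definition of a Krull domain to extract both ``Krull'' and ``$\ast=t$,'' is where the real work lies.
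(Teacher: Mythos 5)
Your reduction of (i) $\ra$ (ii) to the claim that there is no chain $P \subsetneq Q$ of nonzero $\ast$-primes targets the right statement, but the mechanism you propose for it does not close, and you yourself flag it as ``where the real work lies.'' Concretely: if you pick $x \in Q \sm P$ and factor $xR$ into pairwise $\ast$-comaximal packets, there is no reason for any packet in that factorization to have radical $Q$ --- the radicals of the packets are the \emph{minimal} primes of $xR$, and $Q$ need not be minimal over $xR$ --- so the ``comparability clash'' you want to stage never materializes. The paper avoids chains and localizations entirely by using one standard fact you never invoke: \emph{a $\ast$-invertible $\ast$-prime is a $\ast$-maximal ideal}. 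Given any $\ast$-prime $P$ and any $0 \ne x \in P$, write $xR = (P_1^{\alpha_1}\cdots P_r^{\alpha_r})^\ast$ with each $P_i$ a $\ast$-prime (this is exactly hypothesis (i) applied to the packet factorization of $xR$); then some $P_i \subseteq P$, and $P_i$ is $\ast$-invertible because it is a $\ast$-factor of the $\ast$-invertible ideal $xR$, hence $P_i$ is $\ast$-maximal and $P = P_i$. In one stroke every $\ast$-prime is $\ast$-maximal, of height one, and $\ast$-invertible.

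That last point also matters for your endgame. The characterization you lean on, ``a domain is Krull iff it is a PVMD of $t$-dimension one with $t$-finite character,'' is false: a rank-one nondiscrete valuation domain satisfies all three conditions and is not Krull. Discreteness must be produced, and in the paper it falls out of the $\ast$-invertibility of each $M \in \starmax(R)$: then $MR_M$ is principal and $M$ has height one, so $R_M$ is a DVR; combined with $\ast$-finite character (each principal ideal has finitely many minimal primes, all now $\ast$-maximal), $R = \bigcap_{M} R_M$ is a locally finite intersection of DVRs, hence Krull. Finally, $\ast = t$ does not follow from the informal ``coarsest finite-type operation'' remark; the paper first shows $R$ is a P$\ast$MD and then extracts both the PVMD property and $\ast=t$ from \cite[Proposition 3.4]{FJS}. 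Your direction (ii) $\ra$ (i) is fine and matches the paper, which records it as well known.
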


\begin{proof} (i) $\ra$ (ii). We first show that $R$ is a P$\ast$MD (cf. \cite{FJS}). Let
$P\in\starspec(R)$ and let $0\not= x\in P$ with
$xR=(P_1^{\al_1}P_2^{\al_2}\ldots P_r^{\al_r})^\ast$. Then, there
exists $i$ such that $P_i\sub P$. The ideal $P_i$, being a
$\ast$-invertible $\ast$-prime, is a $\ast$-maximal ideal. Thus
$P=P_i$. Hence each $\ast$-prime is $\ast$-maximal.  It follows that
each $M\in \starmax(R)$ has height one. Also, since $M$ is
$\ast$-invertible, $MR_M$ is a principal ideal. Hence $R_M$ is a
rank-one discrete valuation domain.  Thus $R$ is a P$\ast$MD.

By \cite[Proposition 3.4]{FJS}, $R$ is a
PVMD and $\ast=t$. In particular, $R$ is a URD of $t$-dimension one.
Then $R=\bigcap\{R_M;\,\, M\in\tmax(R)\}$ has $t$-finite character.
Whence $R$ is a Krull domain.

 (ii) $\ra$ (i) is well known.
\end{proof}

The conditions of Theorem \ref{fact1} are not equivalent to $R$ being a $\ast$-URD.
In fact in \cite{BH} the authors give an example of  a Pr\"ufer URD  having an ideal
(not finitely generated) with infinitely many minimal primes. However we now show that
such an example cannot be found among domains $\ast$-independent of $\ast$-finite character.

Given a star operation of finite type $\ast$  on a domain $R$, as in
\cite{AZ}, we say that $R$ is $\ast$-independent  if, for every pair
$M, N$ of distinct $\ast$-maximal ideals, $M\cap N$ does not contain
a nonzero $(\ast$-)prime ideal, equivalently each $\ast$-prime is
contained in a unique $\ast$-maximal ideal. Domains
$\ast$-independent of $\ast$-finite character are called $h$-local
domains for $\ast=d$ and weakly Matlis domains for $\ast=t$.

A domain $R$ is $\ast$-independent of $\ast$-finite character if and
only if, for each nonzero nonunit $x\in R$, the ideal $xR$ is
expressible as a product $(I_{1}I_{2}\dots  I_{n})^\ast$, where the
$I_{j}'s$ are pairwise $\ast$-comaximal $\ast$-ideals (not
necessarily packets) \cite[Proposition 2.7]{AZ}. Thus weakly Matlis
domains do have a
 kind of  factorization as a $t$-product of mutually $t$-comaximal ideals.

\begin{thm} \label{WM0} Let $R$ be an integral domain and let $\ast$ be a  star operation of finite type on $R$.
Assume that  each $\ast$-prime ideal is
contained in a unique $\ast$-maximal ideal. The following conditions
are equivalent:

\begin{itemize}

    \item[(i)] $\starspec(R)$ is treed and $R$ has $\ast$-finite character;

       \item[(ii)] $R$ is a $\ast$-URD;

    \item[(iii)] Each $\ast$-ideal $I\subneq R$ is
uniquely expressible as a product   $I=(Q_{1}Q_{2}\dots
Q_{n})^\ast$, where  the $Q_{i}$'s are pairwise $\ast$-comaximal
$\ast$-ideals with prime radical.

\end{itemize}
\end{thm}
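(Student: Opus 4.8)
The plan is to establish the cycle (i) $\ra$ (iii) $\ra$ (ii) $\ra$ (i). The first two implications follow almost immediately from results proved above, so all the genuine content lies in (ii) $\ra$ (i), which is the only place where the standing hypothesis of $\ast$-independence actually gets used. For (i) $\ra$ (iii) I would simply invoke Corollary \ref{corfact}: once $\starspec(R)$ is treed and $R$ has $\ast$-finite character, every proper $\ast$-ideal is (uniquely) a $\ast$-product of pairwise $\ast$-comaximal $\ast$-ideals with prime radical, which is precisely condition (iii). Note that $\ast$-independence plays no role in this direction.

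For (iii) $\ra$ (ii), let $X \subneq R$ be a $\ast$-invertible $\ast$-ideal and write $X = (Q_1 Q_2 \cdots Q_n)^\ast$ as furnished by (iii). Since a $\ast$-product is $\ast$-invertible if and only if each of its factors is, every $Q_i$ is $\ast$-invertible; being moreover a $\ast$-ideal with prime radical, each $Q_i$ has a unique minimal prime and is therefore a $\ast$-packet. Thus $X$ is a $\ast$-product of pairwise $\ast$-comaximal $\ast$-packets, and the uniqueness is supplied by Theorem \ref{uniqueness}. Hence $R$ is a $\ast$-URD.

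The crux is (ii) $\ra$ (i). First, because $R$ is a $\ast$-URD, the equivalence of conditions (i) and (vi) in Theorem \ref{*URD} already tells us that $\starspec(R)$ is treed and that each nonzero principal ideal has only finitely many minimal primes. It remains to upgrade this to $\ast$-finite character, and this is exactly where $\ast$-independence enters. Fix a nonzero nonunit $x \in R$ and let $P_1, \dots, P_n$ be the finitely many minimal primes of $xR$; each $P_i$ is a $\ast$-prime, so by $\ast$-independence it is contained in a \emph{unique} $\ast$-maximal ideal $M_i$. Now if $M$ is any $\ast$-maximal ideal with $x \in M$, then $M \supseteq xR$, so $M$ contains some $P_i$, whence $M = M_i$ by uniqueness. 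Therefore $x$ lies in at most the finitely many $\ast$-maximal ideals $M_1, \dots, M_n$, so $R$ has $\ast$-finite character, closing the cycle.

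The main obstacle is less a computation than pinpointing precisely where $\ast$-independence is indispensable. The $\ast$-URD property alone controls only the minimal primes of \emph{principal} ideals, and, as the Pr\"ufer example recalled just before the theorem shows, $\ast$-finite character can genuinely fail in the absence of $\ast$-independence; what $\ast$-independence does is collapse the (a priori large) set of $\ast$-maximal ideals lying over $xR$ onto the finite set singled out by the finitely many minimal primes of $xR$. Once this reduction is seen, the argument is short.
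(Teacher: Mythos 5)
Your proposal is correct and follows essentially the same route as the paper: the paper proves (i) $\lra$ (ii) by observing that, under the $\ast$-independence hypothesis, $\ast$-finite character is equivalent to every nonzero principal ideal having finitely many minimal primes (which is exactly the argument you spell out for (ii) $\ra$ (i)) and then invoking Theorem \ref{*URD}, while (i) $\ra$ (iii) is Corollary \ref{corfact} and (iii) $\ra$ (ii) is noted as clear. Your arrangement as a cycle and your more explicit justification of (iii) $\ra$ (ii) are only cosmetic differences.
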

\begin{proof}  (i) $\lra$ (ii) Since each $\ast$-prime ideal is
contained in a unique $\ast$-maximal ideal, the $\ast$-finite
character property is equivalent to
 the condition that each nonzero principal ideal
 has at most finitely many minimal primes. Hence we can apply Theorem \ref{*URD}.

(i) $\ra$ (iii) is Corollary \ref{corfact}.

 (iii) $\ra$ (ii) is clear.
 \end{proof}

 \begin{cor} \label{WM1} Let  $\ast$ be a  star operation of finite type on $R$ and assume that $R$
 is $\ast$-independent of $\ast$-finite character. Then $R$ is a $\ast$-URD if and only if $\starspec(R)$ is treed.

 In particular a weakly Matlis domain $R$ is a URD if and only if $\tspec(R)$ is treed.
 \end{cor}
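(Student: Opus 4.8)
The plan is to read this off directly from Theorem \ref{WM0}. The standing hypothesis that $R$ is $\ast$-independent of $\ast$-finite character does two things at once: the $\ast$-independence is exactly the blanket assumption of Theorem \ref{WM0} that every $\ast$-prime ideal is contained in a unique $\ast$-maximal ideal, while the $\ast$-finite character supplies one of the two clauses appearing in condition (i) of that theorem. So I would begin by invoking Theorem \ref{WM0}, whose equivalence (i) $\Leftrightarrow$ (ii) asserts that, under $\ast$-independence, $R$ is a $\ast$-URD if and only if $\starspec(R)$ is treed \emph{and} $R$ has $\ast$-finite character.

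The one observation needed is that the $\ast$-finite character clause in (i) is already granted by our hypotheses. Consequently condition (i) of Theorem \ref{WM0} collapses to the single requirement that $\starspec(R)$ be treed. Feeding this back through the equivalence (i) $\Leftrightarrow$ (ii) yields precisely the stated biconditional: $R$ is a $\ast$-URD if and only if $\starspec(R)$ is treed.

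For the concluding ``in particular'' assertion I would specialize to $\ast = t$. By definition a weakly Matlis domain is a domain that is $t$-independent of $t$-finite character, so the hypotheses of the corollary hold with $\ast = t$; and a $t$-URD is, by the convention fixed in the introduction, just a URD. Thus the general statement applied with $\ast=t$ reads: a weakly Matlis domain $R$ is a URD if and only if $\tspec(R)$ is treed. Since the whole argument is a bookkeeping specialization of Theorem \ref{WM0}, there is no genuine obstacle here; the only point to verify is that nothing in the factorization machinery underlying Theorem \ref{WM0} needs to be revisited, as the finite-character hypothesis that drives it is assumed outright rather than reproved.
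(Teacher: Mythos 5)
Your proposal is correct and matches the paper's (implicit) argument exactly: the corollary is stated as an immediate consequence of Theorem \ref{WM0}, obtained by noting that $\ast$-independence supplies the blanket hypothesis of that theorem and that the assumed $\ast$-finite character collapses condition (i) to the treed condition, with the $\ast=t$ specialization giving the weakly Matlis case. Nothing further is needed.
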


We shall give an example of a weakly Matlis domain that is not a URD in the next section (Example \ref{WMnotURD}).

If $R$ has $t$-dimension one, then clearly $R$ is $t$-independent and $\tspec(R)$ is treed.
 If $R$ has $t$-dimension one and  $t$-finite character, then $R$ is called a weakly Krull domain.
The following corollary follows also from \cite[Theorem 3.1]{AMZ}.

 \begin{cor} \label{dim1} Let $R$ be a domain of $t$-dimension one.  The following conditions are
equivalent:
\begin{itemize}
    \item[(i)] $R$ is a URD;
\item[(ii)]
 Each $t$-ideal $I\subneq R$ is expressible as a product
$I=(Q_{1}Q_{2}\dots  Q_{n})_t$, where
the $Q_{i}$'s are pairwise $t$-comaximal $t$-ideals with prime radical;
 \item[(iii)] $R$ is a weakly Krull domain.
 \end{itemize}
 \end{cor}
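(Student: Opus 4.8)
The plan is to prove Corollary~\ref{dim1} by leveraging the already-established machinery for $\ast$-URD's, specialized to $\ast=t$ under the hypothesis that $R$ has $t$-dimension one. The crucial observation is that $t$-dimension one forces two structural facts for free: first, any two incomparable $t$-primes are necessarily $t$-comaximal (since a proper containment relation between distinct height-one primes is impossible, and a common $t$-maximal ideal above two such primes would itself have height one, making both primes equal to it), so $\tspec(R)$ is automatically treed; second, since every $t$-prime is height-one and therefore $t$-maximal, each $t$-prime is trivially contained in a unique $t$-maximal ideal, so $R$ is automatically $t$-independent. These two remarks let me invoke Corollary~\ref{WM1}, whose hypothesis (that $R$ be $t$-independent of $t$-finite character) I must be careful about, and the equivalence (i)~$\lra$~(iii) of Theorem~\ref{WM0}.

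First I would verify the two free facts above explicitly but briefly: $\tspec(R)$ is treed and $R$ is $t$-independent. Next, for (i)~$\ra$~(iii), I would apply Corollary~\ref{URD} (the $t$-operation specialization of Theorem~\ref{*URD}), using condition~(v) there: once I know $\tspec(R)$ is treed, it remains to show that a URD of $t$-dimension one has each $t$-invertible (indeed each $t$-finite) $t$-ideal with finitely many minimal primes, and then the factorization in (iii) of the present corollary is exactly condition~(ii) of Corollary~\ref{URD}. The implication (iii)~$\ra$~(i) follows from Theorem~\ref{fact1}, since the stated factorization of every proper $t$-ideal into $t$-comaximal $t$-ideals with prime radical is precisely condition~(ii) of that theorem, which yields that $R$ is a URD.

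The heart of the argument is the equivalence with (iii), \emph{$R$ is a weakly Krull domain}. The definition of weakly Krull already given in the text is: $t$-dimension one together with $t$-finite character. So under the standing hypothesis of $t$-dimension one, being weakly Krull is equivalent to having $t$-finite character. Thus I must show that, for a $t$-dimension-one domain, being a URD is equivalent to having $t$-finite character. Having already noted that $R$ is $t$-independent, I can apply Corollary~\ref{WM1}: $R$ is a URD if and only if $\tspec(R)$ is treed. But $\tspec(R)$ is \emph{always} treed here, so the content of Corollary~\ref{WM1} would seem to make URD automatic, which signals that the genuine restriction must live in the $\ast$-finite character hypothesis implicit in Corollary~\ref{WM1}. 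So the correct route is through Theorem~\ref{WM0}: with $R$ being $t$-independent and $\tspec(R)$ treed, condition~(i) of Theorem~\ref{WM0} reduces to $t$-finite character alone, and (i)~$\lra$~(ii) gives URD $\lra$ $t$-finite character $\lra$ weakly Krull, while (i)~$\lra$~(iii) supplies the factorization statement.

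The main obstacle I expect is the bookkeeping around which hypotheses of the earlier theorems are automatically satisfied versus which carry real content. Specifically, Corollary~\ref{WM1} presupposes $\ast$-finite character as part of ``$\ast$-independent of $\ast$-finite character,'' so I cannot simply cite it to get the equivalence with weakly Krull; I must instead peel apart Theorem~\ref{WM0}, where $t$-finite character appears explicitly in condition~(i) and is \emph{not} assumed at the outset. Getting the logical dependencies exactly right---using $t$-dimension one to supply ``treed'' and ``$t$-independent'' gratis, and isolating $t$-finite character as the sole remaining condition equivalent to URD---is the delicate part, after which each implication is a direct appeal to Theorem~\ref{WM0} and Corollary~\ref{URD}.
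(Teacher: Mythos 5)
Your proposal is correct and follows essentially the same route as the paper: observe that $t$-dimension one makes $\tspec(R)$ treed and $R$ $t$-independent for free, then read off all three equivalences from Theorem \ref{WM0} with $\ast=t$, where condition (i) collapses to $t$-finite character, i.e.\ to $R$ being weakly Krull. (Your mid-argument references to ``the factorization in (iii)'' should read ``(ii)'', and the detour through Corollary \ref{URD} would only cover finite-type $t$-ideals rather than all $t$-ideals, but you correctly discard both in favor of Theorem \ref{WM0}, which is exactly the paper's argument.)
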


From Corollary \ref{dim1}, we see that examples of  URD's are abundant in the form of
weakly Krull domains as rings of algebraic integers of finite
extensions of the
field of rational numbers or as extensions $K_{1}+XK_{2}[X]$ where $
K_{1}\subseteq K_{2}$ is an extension of fields.

The following proposition is an improvement of Corollary \ref{corollary7}.

\begin{prop} Let $R$ be a ring of Krull type. Then  each proper $t$-ideal $I$ of $R$ can be
 uniquely written in the form $I=
(Q_{1}Q_{2}\dots  Q_{n})_{t}$, where the $Q_{i}$'s are pairwise
$t$-comaximal ideals with prime radical. In particular $R$ is a URD.
\end{prop}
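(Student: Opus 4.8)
The goal is to show that a ring of Krull type—which by definition is a PVMD with $t$-finite character—satisfies the stronger conclusion that every proper $t$-ideal (not merely every $t$-invertible $t$-ideal or principal ideal) factors uniquely into pairwise $t$-comaximal ideals with prime radical. The key structural facts are already available: the $t$-spectrum of a PVMD is treed (fact (II) of Section 1), and $t$-finite character is part of the definition of a ring of Krull type.

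The plan is to invoke Corollary \ref{corfact} directly. That corollary states that if $\ast$ is a star operation of finite type with $\starspec(R)$ treed and $R$ of $\ast$-finite character, then every $\ast$-ideal $I\subneq R$ is uniquely expressible as $(Q_1Q_2\dots Q_n)^\ast$ with the $Q_i$ pairwise $\ast$-comaximal $\ast$-ideals with prime radical. So the entire proof reduces to verifying the two hypotheses for $\ast=t$. First I would note that in a PVMD the set of prime $t$-ideals is a tree under inclusion (fact (II)), which is exactly the statement that $\tspec(R)$ is treed. Second, a ring of Krull type is by definition a PVMD of $t$-finite character, so $R$ has $t$-finite character. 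With both hypotheses in hand, Corollary \ref{corfact} applies with $\ast=t$ and yields the desired unique factorization.

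For the final sentence, ``In particular $R$ is a URD,'' I would observe that once every proper $t$-ideal factors in this way, the hypotheses of Theorem \ref{fact1}(i) hold—$\tspec(R)$ is treed and (by $t$-finite character) each proper $t$-ideal has only finitely many minimal primes—so that theorem gives that $R$ is a $t$-URD, i.e.\ a URD. Alternatively, this follows from the ``In particular'' clause already built into Corollary \ref{corfact}, or from Corollary \ref{URD}. This provides the claimed improvement over Corollary \ref{corollary7}, which only asserted that a ring of Krull type is a URD (a statement about $t$-invertible $t$-ideals and principal ideals); here we upgrade the factorization to \emph{all} proper $t$-ideals, dropping any invertibility assumption.

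There is essentially no obstacle here: the proposition is a clean corollary of the machinery developed in Section 2, and the only content is the recognition that the two defining or structural properties of a ring of Krull type—being a PVMD (hence $\tspec$ treed) and having $t$-finite character—are precisely the hypotheses of Corollary \ref{corfact}. The one point deserving a word of care is that the factorization in Corollary \ref{corfact} is stated for \emph{every} proper $t$-ideal, and we should make sure the $Q_i$ are not required to be $t$-invertible or principal, which they are not; this is exactly what makes the result stronger than Corollary \ref{corollary7}.
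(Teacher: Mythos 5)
Your proof is correct and matches the paper's own argument, which simply cites Corollary \ref{corfact}: a ring of Krull type is a PVMD (so $\tspec(R)$ is treed) with $t$-finite character, which are exactly the hypotheses needed. Your additional care about the "In particular" clause and the comparison with Corollary \ref{corollary7} is accurate but not needed beyond what the corollary already provides.
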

\begin{proof}
Follows from Corollary \ref{corfact}.
\end{proof}

From Theorem \ref{WM0} we also get:

 \begin{prop} \label{WM2} Let $R$ be a PVMD such that each $t$-prime ideal is
contained in a unique $t$-maximal ideal. The following conditions are
equivalent:
\begin{itemize}
\item[(i)] $R$ is a ring of Krull type;
\item[(ii)] $R$ is a weakly Matlis domain;
 \item[(iii)] $R$ is a URD;
     \item[(iv)] For each nonzero nonunit $x\in R$, the ideal $xR$ is
uniquely expressible as a product $(I_{1}I_{2}\dots  I_{n})_{t}$,
where the $I_{j}'s$ are pairwise $t$-comaximal ideals;
\item[(v)] Each $t$-ideal $I\subneq R$ is
uniquely expressible as a product   $(Q_{1}Q_{2}\dots  Q_{n})_{t}$,
where  the $Q_{i}$'s are pairwise $t$-comaximal ideals with prime
radical.
\end{itemize}
\end{prop}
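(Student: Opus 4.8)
The plan is to observe that, under the standing hypotheses, every one of the six conditions is equivalent to the single statement that $R$ has $t$-finite character, and then to let Theorem \ref{WM0} and the cited structure results do the work. First I would record two facts about $R$: since $R$ is a PVMD its set of prime $t$-ideals is treed (observation (II) of Section \ref{one}), and the hypothesis that every $t$-prime lies in a unique $t$-maximal ideal is precisely $t$-independence. Thus $R$ meets the standing assumption of Theorem \ref{WM0} for $\ast=t$, and in that theorem condition (i) (``$\tspec(R)$ treed and $t$-finite character'') collapses to ``$t$-finite character'' because treedness is automatic. Unwinding definitions then gives (i) $\Leftrightarrow$ (ii) immediately: a ring of Krull type is a PVMD of $t$-finite character, so given that $R$ is a PVMD, (i) holds iff $R$ has $t$-finite character; a weakly Matlis domain is a domain $t$-independent of $t$-finite character, so given $t$-independence, (ii) holds iff $R$ has $t$-finite character.

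Next I would feed this into Theorem \ref{WM0} with $\ast=t$. Its equivalence of conditions (i), (ii) and (iii) reads, in the present notation, as ``$t$-finite character'' $\Leftrightarrow$ ``$R$ is a URD'' $\Leftrightarrow$ ``every proper $t$-ideal is uniquely a $t$-product of pairwise $t$-comaximal $t$-ideals with prime radical'', that is, $t$-finite character $\Leftrightarrow$ (iii) $\Leftrightarrow$ (v). Combined with the previous paragraph, (i), (ii), (iii) and (v) are all equivalent to $t$-finite character.

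It remains to incorporate (iv). The existence assertion in (iv) is handled directly by \cite[Proposition 2.7]{AZ}: $R$ is $t$-independent of $t$-finite character if and only if every principal $xR$ (with $x$ a nonzero nonunit) is a $t$-product of pairwise $t$-comaximal $t$-ideals; given $t$-independence this says that the factorization in (iv) exists iff $R$ has $t$-finite character, hence iff (ii) holds. The uniqueness clause is the one genuinely delicate point, since the factors of (iv) are only required to be pairwise $t$-comaximal and could a priori be regrouped; here I would take the factorization with indecomposable factors and show these have prime radical. Indeed, under $t$-finite character a $t$-ideal is supported at finitely many $t$-maximal ideals, and if it is supported at more than one, the identity $(\bigcap I_{\alpha})_t=(\prod I_{\alpha})_t$ for pairwise $t$-comaximal families lets one split it further; so an indecomposable factor is supported at a single $t$-maximal ideal $M$, and because $\tspec(R)$ is treed the $t$-primes inside $M$ form a chain, forcing a unique minimal prime and thus prime radical. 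Consequently the indecomposable form of (iv), applied to $I=xR$, coincides with the prime-radical factorization of (v), and its uniqueness follows from Theorem \ref{uniqueness}. The main obstacle is exactly this last step: clarifying the sense in which the $t$-comaximal factorization of (iv) is unique and matching it to the prime-radical factorization governed by Theorem \ref{WM0} and Theorem \ref{uniqueness}.
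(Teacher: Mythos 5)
Your proof is correct and takes essentially the same route the paper intends: the paper offers no argument beyond the phrase ``From Theorem \ref{WM0} we also get,'' relying on exactly the observations you make --- that a PVMD has treed $t$-spectrum, that the standing hypothesis is $t$-independence (so condition (i) of Theorem \ref{WM0} collapses to $t$-finite character, which is also what (i) and (ii) of the proposition reduce to), and on \cite[Proposition 2.7]{AZ} for condition (iv). Your extra care with the uniqueness clause in (iv) --- refining an arbitrary $t$-comaximal factorization to indecomposable factors supported at a single $t$-maximal ideal, noting these have prime radical by treedness, and invoking Theorem \ref{uniqueness} --- supplies a detail the paper leaves entirely implicit, and it is sound.
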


 Recall that a domain $R$ is an almost Krull domain if $R_M$ is a DVR for each $t$-maximal ideal $M$.
 Thus a Krull domain is precisely an almost Krull domain with $t$-finite character.

  \begin{cor} An almost Krull domain is a URD if and only if it is a Krull domain.
 \end{cor}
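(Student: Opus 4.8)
The plan is to reduce the statement to Corollary \ref{dim1} by first checking that every almost Krull domain automatically has $t$-dimension one. So, letting $R$ be an almost Krull domain, I would start from a $t$-maximal ideal $M$: by hypothesis $R_M$ is a DVR, hence a one-dimensional local domain. Since localization at $M$ gives an order-preserving bijection between the primes of $R$ contained in $M$ and the primes of $R_M$, the only primes of $R$ contained in $M$ are $(0)$ and $M$; thus $M$ has height one. As $t$ is of finite type, every nonzero $t$-prime is contained in some $t$-maximal ideal, and being contained in a height-one prime it must itself have height one. Hence $R$ has $t$-dimension one.

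Having $t$-dimension one, $R$ is automatically $t$-independent with $\tspec(R)$ treed, as noted after Corollary \ref{dim1}. Therefore Corollary \ref{dim1} applies verbatim and tells us that $R$ is a URD if and only if $R$ is a weakly Krull domain; since the $t$-dimension one condition is already in force, this is equivalent to saying that $R$ has $t$-finite character.

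Finally I would invoke the characterization recalled just before the statement: a Krull domain is precisely an almost Krull domain with $t$-finite character. Combining this with the previous paragraph yields, for the almost Krull domain $R$, the chain $R \text{ is a URD} \iff R \text{ has } t\text{-finite character} \iff R \text{ is Krull}$, which is exactly the assertion. The only genuinely nontrivial step is the first one, namely recognizing that the DVR hypothesis on the localizations $R_M$ forces $t$-dimension one; once that observation is in place, the rest is pure bookkeeping with Corollary \ref{dim1} and the definitions of weakly Krull and Krull domains, so I do not expect any real obstacle beyond it.
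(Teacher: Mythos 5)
Your proposal is correct and follows exactly the route the paper intends (the paper states the corollary without proof, immediately after recalling that a Krull domain is an almost Krull domain with $t$-finite character): observe that the DVR hypothesis on each $R_M$ forces $t$-dimension one, then apply Corollary \ref{dim1} to identify URD with weakly Krull, i.e.\ with $t$-finite character. No gaps.
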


 We end this section with a discussion of factorization of $t$-ideals in generalized Krull domains.

 We recall that a Pr\"ufer domain $R$ is called strongly discrete if
each nonzero prime ideal $P$ of $R$ is not idempotent and that a
generalized Dedekind domain is a strongly discrete Pr\"ufer domain
such that every nonzero principal ideal of $R$ has at most finitely
many minimal primes \cite{P}. The first author gave a generalization
of these domains introducing the class of generalized Krull domains.
A generalized Krull domain is defined as a PVMD $R$ such that
$(P^2)_t\neq P$, for every prime $t$-ideal $P$ of $R$, i. e., $R$ is
a strongly discrete PVMD, and every nonzero principal ideal of $R$
has at most finitely many minimal primes \cite{ElB1, ElB2}. Here it
may be noted that this generalized Krull domain
 is quite different from the generalized Krull domain of
Ribenboim \cite{R} and a generalized Dedekind domain of Popescu
\cite{P} is quite different from what the third author called a
generalized Dedekind domain in \cite{ZGDD}. Incidentally these
later domains were also studied under the name of pseudo Dedekind
domains by Anderson and Kang in \cite{AK}.

Since a generalized Krull domain is a PVMD such that each $t$-ideal has at most finitely many minimal primes \cite[Theorem 3.9]{ElB1}, 
from Theorem \ref{PVMD}, we immediately get:

\begin{prop} \label{GK} If $R$ is a generalized Krull domain, then each $t$-ideal
 $I\subneq R$ is uniquely expressible as a product
$I=(Q_{1}Q_{2}\dots  Q_{n})_{t}$, where the $Q_{i}$'s are pairwise
$t$-comaximal ideals with prime radical.
 \end{prop}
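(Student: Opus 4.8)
The plan is to obtain the statement as an immediate consequence of the apparatus already assembled, so that the only genuine task is to verify the hypotheses of the relevant theorems. By the definition recalled just above, a generalized Krull domain is in particular a PVMD, and by \cite[Theorem 3.9]{ElB1} every $t$-ideal of such a domain has at most finitely many minimal primes. Hence, for an arbitrary proper $t$-ideal $I\subneq R$, condition (i) of Theorem \ref{PVMD} is automatically satisfied.

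I would then apply the implication (i) $\Rightarrow$ (ii) of Theorem \ref{PVMD} directly: it yields a factorization $I=(Q_1Q_2\dots Q_n)_{t}$ in which the $Q_i$ are pairwise $t$-comaximal $t$-ideals with prime radical, which is the existence part of the claim. For the uniqueness assertion I would invoke Theorem \ref{uniqueness} with $\ast=t$; since each $Q_i$ is a $t$-ideal with prime radical and the factors are pairwise $t$-comaximal, that theorem guarantees the factorization is unique up to the order of the factors. Combining the two gives exactly the stated conclusion.

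There is essentially no obstacle to overcome at this stage: the substantive content has already been absorbed into Theorem \ref{PVMD} (whose proof rests on the fact that $\tspec(R)$ of a PVMD is treed, together with Proposition \ref{fact0}) and into the finiteness input from \cite[Theorem 3.9]{ElB1}. The only point meriting a moment's care is that both Theorem \ref{PVMD} and Theorem \ref{uniqueness} are stated for arbitrary proper (respectively arbitrary) $t$-ideals rather than merely $t$-finite ones; because the finiteness of minimal primes holds for \emph{all} $t$-ideals in a generalized Krull domain, no passage to $\ast$-finite ideals is needed, and the conclusion therefore holds for every proper $t$-ideal $I\subneq R$ as asserted.
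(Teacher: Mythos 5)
Your proposal is correct and follows exactly the paper's own route: the paper derives Proposition \ref{GK} immediately from the fact that a generalized Krull domain is a PVMD in which every $t$-ideal has finitely many minimal primes (citing \cite[Theorem 3.9]{ElB1}), together with Theorem \ref{PVMD} for existence and Theorem \ref{uniqueness} for uniqueness. Your explicit check that no restriction to $t$-finite ideals is needed is a sensible precaution, but it is the same argument the paper intends.
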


By the previous proposition, we see that a generalized Krull domain is a URD. As a matter of fact,
taking into account Theorem \ref{Theorem C}, we have the following characterization.

 \begin{prop}\label{corollary8}  A PVMD $R$ is a generalized Krull domain if and only if
$R$ is a URD and $(P^{2})_t\neq P$, for each nonzero prime $t$-ideal $P$ of $R$.
\end{prop}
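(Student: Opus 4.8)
The plan is to obtain both implications directly by matching the defining conditions of a generalized Krull domain against Theorem~\ref{Theorem C}. Recall that, by definition, a generalized Krull domain is a PVMD $R$ that is strongly discrete, i.e.\ $(P^2)_t\neq P$ for every nonzero prime $t$-ideal $P$, and in which every nonzero principal ideal has at most finitely many minimal primes. Since the ambient hypothesis ``$R$ is a PVMD'' is present on both sides of the asserted equivalence, the entire content reduces to the observation that, for a PVMD, the finiteness-of-minimal-primes condition on principal ideals is \emph{exactly} the URD condition---which is precisely the statement of Theorem~\ref{Theorem C}.

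For the direct implication I would argue as follows. Assume $R$ is a generalized Krull domain. Then $R$ is a PVMD in which every nonzero principal ideal has finitely many minimal primes, so Theorem~\ref{Theorem C} yields that $R$ is a URD. Strong discreteness is by definition the hypothesis $(P^2)_t\neq P$ for each nonzero prime $t$-ideal $P$, so both required properties hold.

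For the converse, assume $R$ is a PVMD that is a URD and satisfies $(P^2)_t\neq P$ for every nonzero prime $t$-ideal $P$. Applying Theorem~\ref{Theorem C} in the opposite direction, the URD hypothesis forces every nonzero principal ideal of $R$ to have at most finitely many minimal primes; combined with strong discreteness, this is verbatim the definition of a generalized Krull domain.

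There is no genuine obstacle here: once Theorem~\ref{Theorem C} is available, the proposition is merely a bookkeeping rearrangement of the three defining clauses of a generalized Krull domain, with the principal-ideal clause traded for the URD property.
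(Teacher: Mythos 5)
Your proposal is correct and matches the paper's own argument: the paper states this proposition as an immediate consequence of Theorem~\ref{Theorem C} together with the definition of a generalized Krull domain, which is exactly the bookkeeping you carry out. Nothing further is needed.
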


 For generalized Dedekind domains, Proposition \ref{GK} was proved with different methods
  by the second author of this paper and N. Popescu in \cite[Proposition 2.4]{gp}.
  They also showed that a nonzero ideal $I$ of a  generalized Dedekind domain is divisorial
  if and only if  $I=JP_1\dots P_n$, where $J$ is a fractional invertible ideal and
  $P_1, \dots,  P_n$ ($n\ge 1$) are pairwise comaximal prime ideals \cite[Proposition 3.2]{gp}.
   This result was sharpened by B. Olberding, who proved that  $R$ is
    a generalized Dedekind domain whose ideals are all divisorial, equivalently an $h$-local strongly
     discrete Pr\"ufer domain \cite[Corollary 3.6]{ElBG1}, if and only if each ideal $I$ of $R$ is a product of
      finitely generated and prime ideals \cite[Theorem 2.3]{O2}.
Passing through the $t$-Nagata ring, we now extend Olberding's
result to generalized Krull domains. A first step in this direction
was already done by the first author, who proved that, for each
nonzero ideal $I$ of a generalized Krull domain, $I$ is divisorial
if and only if $I=(JP_1\ldots P_n)_t$, where $J\sub R$ is a finitely
generated ideal and $P_1, \ldots, P_n$ ($n\ge 1$)  are pairwise
$t$-comaximal $t$-prime ideals \cite[Theorem 3.4]{ElB2}.

As in \cite{Kang}, we
    set $N(t)= \{h\in R[X] \mid h \neq 0 \mbox{ and } c(h)_t=R\}$, where $c(h)$ denotes the content
    of $h$, and call the domain $\Na:=R[X]_{N(t)}$ the $t$-Nagata ring
of $R$.   B. G.   Kang proved that $R$ is a P$v$MD if and only if
$\Na$ is a Pr\"ufer (indeed a Bezout) domain \cite[Theorem
3.7]{Kang}. In this case,  the map $I_t\mapsto I\Na$  is a lattice
isomorphism between the lattice of $t$-ideals of $R$ and the lattice
of ideals of $\Na$, whose inverse is the map $J\mapsto J\cap R$, and
$P$ is a $t$-prime (respectively, $t$-maximal) ideal of $R$ if and
only if $PR\langle X \rangle$ is a prime (respectively, maximal)
ideal of $R\langle X \rangle$ \cite[Theorem 3.4]{Kang}.
    We recall that a domain is called $w$-divisorial if each $w$-ideal is divisorial \cite{ElBG1}.
Each $w$-divisorial domain is a weakly Matlis domain \cite[Theorem 1.5]{ElBG1} and
a $w$-divisorial generalized Krull domain is precisely a weakly Matlis domain that is a strongly discrete
PVMD \cite[Theorem 3.5]{ElBG1}. We also recall that, when $R$ is a PVMD, $w=t$ \cite[Theorem 3.1]{Kang}.

    \begin{thm} Let $R$ be a domain.
 The following conditions are equivalent:
\begin{itemize}

\item[(i)] $R$ is a $w$-divisorial generalized Krull domain;

\item[(ii)]
For each nonzero  ideal $I\subneq R$, $I_w=(JP_1\ldots P_n)_w$,
where $J\sub R$ is finitely generated and $P_1, \ldots, P_n$ ($n\ge
1$) are pairwise $t$-comaximal $t$-prime ideals;

\item[(iii)] For each nonzero ideal $I\subneq R$, $I_w=(J_1\dots  J_mP_1\ldots P_n)_w$,
where $J_1,\dots, J_m$ are mutually $t$-comaximal packets and $P_1,
\ldots, P_n$ ($n\ge 1$) are pairwise $t$-comaximal $t$-prime
ideals.
\end{itemize}
\end{thm}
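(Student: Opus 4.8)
The plan is to transport the whole statement to the $t$-Nagata ring $\Na=R[X]_{N(t)}$ and then read it off from Olberding's factorization theorem \cite{O2}. The organizing remark is that condition (i) is visible on $\Na$: by \cite[Theorem 3.5]{ElBG1} a $w$-divisorial generalized Krull domain is precisely a weakly Matlis strongly discrete PVMD, so when $R$ is a PVMD, Kang's correspondence \cite{Kang} makes $I_t\mapsto I\Na$ a product-preserving lattice isomorphism onto the ideals of the Bezout domain $\Na$, carrying $t$-primes to primes and $t$-maximal ideals to maximal ideals. Under this dictionary I would check that $(P^2)_t\neq P$ for every $t$-prime $P$ is the same as $(P\Na)^2\neq P\Na$ for every prime, and that being $t$-independent of $t$-finite character is the same as every prime of $\Na$ lying in a unique maximal ideal together with finite character; that is, (i) says exactly that $\Na$ is an $h$-local strongly discrete Pr\"ufer domain. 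By \cite[Corollary 3.6]{ElBG1} this coincides with $\Na$ being a generalized Dedekind domain whose ideals are all divisorial, and by \cite[Theorem 2.3]{O2} this in turn holds if and only if every ideal of $\Na$ is a product of finitely generated ideals and prime ideals.

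For the forward implications I would work on $R$, where (i) already furnishes a generalized Krull PVMD. For (i)$\,\Rightarrow\,$(ii): since $R$ is $w$-divisorial, $I_w$ is divisorial for each nonzero $I\subneq R$, and \cite[Theorem 3.4]{ElB2} expresses a divisorial ideal of a generalized Krull domain as $(JP_1\cdots P_n)_t$ with $J$ finitely generated and the $P_i$ pairwise $t$-comaximal $t$-primes; as $w=t$ in a PVMD \cite{Kang}, this is the form required in (ii). For (i)$\,\Rightarrow\,$(iii): starting from the form just obtained, the finitely generated ideal $J$ is $t$-invertible in the PVMD $R$ and has only finitely many minimal primes, so Theorem \ref{Theorem B} factors it as a $t$-product of mutually $t$-comaximal packets $J_1,\dots,J_m$, and substitution yields (iii).

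The substance is the converse, (iii)$\,\Rightarrow\,$(i) (and symmetrically (ii)$\,\Rightarrow\,$(i)), where $R$ is a priori an arbitrary domain. The goal is to verify the hypothesis of \cite[Theorem 2.3]{O2} for $\Na$ — that every ideal of $\Na$ is a product of finitely generated and prime ideals — for then Olberding's theorem yields the $h$-local strongly discrete Pr\"ufer property of $\Na$, whence $R$ is a PVMD and, reading the dictionary backwards, a weakly Matlis strongly discrete PVMD, i.e. a $w$-divisorial generalized Krull domain. Here the extension step is unconditional: since $I\Na=I_w\Na$ and $P\Na$ is prime for every $t$-prime $P$ even without assuming $R$ a PVMD, a factorization $I_w=(J_1\cdots J_mP_1\cdots P_n)_w$ from (iii) extends to $I\Na=(J_1\Na)\cdots(J_m\Na)(P_1\Na)\cdots(P_n\Na)$, a product of finitely generated and prime ideals of $\Na$ — but this only accounts for the \emph{extended} ideals of $\Na$.

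The hard part, and the step I expect to be the main obstacle, is exactly that last word: until $R$ is known to be a PVMD the Nagata correspondence is not yet a lattice isomorphism, so a priori not every ideal of $\Na$ is extended from a $w$-ideal of $R$, whereas Olberding's hypothesis is a statement about \emph{all} ideals of $\Na$. Thus the crux is to extract the PVMD property from (iii) — equivalently, to show every ideal of $\Na$ is extended. The route I would pursue uses the general star-operation results of the second section, which do not presuppose a PVMD: the factorization of each principal ideal (all of whose factors are $t$-invertible, hence packets) should give, after organizing the factors into $t$-comaximal form and applying Theorem \ref{*URD} and Proposition \ref{treed}, that $\tspec(R)$ is treed and that $R$ is a URD, while the genuine $t$-prime factors (the clause $n\ge1$) should force $(P^2)_t\neq P$ and, through the finiteness of the factorizations of \emph{all} $t$-ideals rather than just principal ones, the weakly Matlis property; Proposition \ref{corollary8} and Theorem \ref{WM0} would then assemble these into (i). It is precisely this promotion — from ``every $w$-ideal of $R$ factors'' to ``every ideal of $\Na$ is a product of finitely generated and prime ideals'' — that carries the real weight and on which the argument must be made airtight.
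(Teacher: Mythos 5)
Your treatment of (i) $\Rightarrow$ (ii) and (i) $\Rightarrow$ (iii) matches the paper's (via \cite[Theorem 3.4]{ElB2} and the packet factorization of the finitely generated $t$-invertible ideal $J$), and you have correctly located the crux of the converse: Olberding's theorem \cite[Theorem 2.3]{O2} is a statement about \emph{all} ideals of $\Na$, whereas before $R$ is known to be a PVMD the factorizations in (ii)/(iii) only account for the \emph{extended} ideals of $\Na$. But you do not close that gap, and the route you sketch for closing it cannot work as stated. Proposition \ref{corollary8} \emph{presupposes} that $R$ is a PVMD, and the Section 2 machinery (Theorem \ref{*URD}, Proposition \ref{treed}) delivers at best that $R$ is a URD with treed $t$-spectrum --- which, as the paper itself remarks in Section 1, does not imply that $R$ is a PVMD. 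So the essential content of (ii) $\Rightarrow$ (i), namely that $R_M$ is a valuation domain for each $M\in\tmax(R)$, is never reached by your argument.

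The paper resolves this by localizing \emph{before} passing to the Nagata ring. For $M \in \tmax(R)$ every ideal of $R_M$ has the form $IR_M$ with $I\sub R$ (localization, unlike the Nagata extension, is surjective on ideals), and applying (ii) to $I' = IR_M \cap R$ gives $IR_M = (JR_M)(P_1R_M)\cdots(P_nR_M)$; a Nakayama argument shows that at least one $P_iR_M$ is proper, so the required form (finitely generated times a nonempty product of primes) is preserved. Olberding's theorem applied to the quasilocal ring $R_M$ then makes $R_M$ a strongly discrete valuation domain, whence $R$ is a strongly discrete PVMD. Only at that point does your intended Nagata-ring step become available: with $R$ a PVMD, every ideal of $\Na$ is extended \cite[Theorem 3.14]{Kang}, Olberding applies to $\Na$, and $h$-locality of $\Na$ yields the weakly Matlis property of $R$, hence (i). If you want to salvage your outline, replace the appeal to the Section 2 results by this local application of \cite[Theorem 2.3]{O2}.
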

\begin{proof}
(i) $\ra$ (ii) and (iii).  Since $I_w$ is divisorial and $w=t$, (ii)
follows from \cite[Theorem 3.4]{ElB2}. The statement (iii) follows
from (ii) and  Proposition \ref{GK}.

     (iii) $\ra$ (ii) is clear.

(ii) $\ra$ (i).
 Assume that the factorization holds. If $M\in \tmax(R)$, then each nonzero proper ideal
of $R_M$ is of type $IR_M$, for some ideal $I$ of $R$. Let
$I'=IR_M\cap R$, by (ii), $I'_w=(JP_1\ldots P_n)_w$, where $J\sub R$
is finitely generated and $P_1, \ldots, P_n$ ($n\ge 1$ ) are
pairwise $t$-comaximal $t$-prime ideals. Then
$IR_M=I'R_M=(JR_M)(P_1R_M)\ldots (P_nR_M)$. We claim that there is
an $i$ such that $P_iR_M\not=R_M$. Otherwise, $IR_M=JR_M$, and hence
$J\sub I'=JR_M\cap R$, so $J_w\sub I'_w=(JP_1\ldots P_n)_w\sub J_w$.
Hence $J_w=(JP_1\ldots P_n)_w$. Let $N\in \tmax(R)$ such that
$P_1\sub N$, then $JR_N=JP_1R_N$, which is impossible by the
Nakayama lemma. Hence $IR_M$ is a product of a finitely generated
ideal and prime ideals. Thus $R_M$ is a strongly discrete valuation
domain by \cite[Theorem 2.3]{O2}. It follows that $R$ is a strongly
discrete PVMD.

 To show that $R$ is weakly Matlis, it is enough to show that the $t$-Nagata ring $\Na$ is  $h$-local
  \cite[Theorem 2.12]{ElBG2}. But, since $R$ is a PVMD, each nonzero ideal of $\Na$ is of type $I\Na=I_t\Na=I_w\Na$ for
   some ideal $I$ of
$R$ \cite[Theorem 3.14]{Kang}.  Hence each nonzero ideal of $\Na$ is a product of a finitely generated ideal
and prime ideals.
 Again from  \cite[Theorem 2.3]{O2} it follows  that $\Na$ is $h$-local.
\end{proof}

\section{Extensions and Examples of URD's}

 In this section we shall discuss ways of constructing examples of
URD's. In addition to showing that a ring of fractions of a URD is
again a URD we identify the conditions under which a polynomial ring
over a URD is again a URD. Using these two main tools we then delve
into the polynomial ring construction $D^{(S)}=D+XD_{S}[X]$. We show
that if $D$ and $D_{S}[X]$ are URD's then $D^{(S)}$ is a URD if and
only if $\tspec(D^{(S)})$ is treed. This indeed leads to several
interesting results.

Since polynomial ring constructions are our main concern here we
start off with a study of polynomial rings over URD's. We recall
from \cite{HH} and \cite{HZ} that, for each nonzero ideal $I$ of a
domain $D$, we have that $I_{t}[X]=I[X]_{t}$ is a $t$-ideal of
$D[X]$ and that the contraction to $D$ of a $t$-ideal of $D[X]$
which is not an upper to zero is a $t$-ideal of $D$. In addition,
each extended $t$-maximal ideal of $D[X]$ is of type $M[X]$, with
$M\in \tmax(D)$. Note that $f(X)\in M[X]$ if and only if
$c(f)_{t}\sub M$, where $c(f)$ denotes the content of $f(X)$, that
is the ideal of $D$ generated by the coefficients of $f(X)$.

\begin{prop}\label{polynomial}  Let $D$ be a domain and $X$ an indeterminate over $D$.
\begin{itemize}
\item[(1)] If $D[X]$ is a URD, then $D$ is a URD.
\item[(2)] If $D$ is a URD,  then $D[X]$ is a URD if and only if  $\tspec(D[X])$ is treed.
\end{itemize}
\end{prop}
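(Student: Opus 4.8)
The plan is to run everything through the characterization of URD's in Corollary \ref{URD}: a domain $R$ is a URD if and only if $\tspec(R)$ is treed and each nonzero principal ideal (equivalently each $t$-ideal of finite type) of $R$ has only finitely many minimal primes. The two assertions then become statements about treedness and about counting minimal primes, and I will use freely the polynomial $t$-ideal facts recalled just before the proposition (that $I_t[X]=I[X]_t$, that the contraction of a non-upper-to-zero $t$-ideal of $D[X]$ is a $t$-ideal of $D$, and that the extended $t$-maximal ideals of $D[X]$ are the $M[X]$ with $M\in\tmax(D)$).

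For (1), assume $D[X]$ is a URD. To see that $\tspec(D)$ is treed I would argue by contraposition: if $P,P'$ were incomparable $t$-primes of $D$ inside a common $t$-maximal ideal $M$, then $P[X]$ and $P'[X]$ are incomparable $t$-primes of $D[X]$ (their contractions recover $P$ and $P'$) lying in the $t$-maximal ideal $M[X]$, contradicting that $\tspec(D[X])$ is treed (Corollary \ref{URD}). Next, for $0\neq a\in D$ the extended ideal $aD[X]=(aD)[X]$ satisfies $D[X]/aD[X]\cong(D/aD)[X]$, so its minimal primes are exactly the $P[X]$ with $P$ minimal over $aD$, and they are in bijection with the minimal primes of $aD$; since $D[X]$ is a URD, $aD[X]$ has finitely many minimal primes, whence so does $aD$. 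By Corollary \ref{URD}, $D$ is a URD.

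For (2), the forward implication is immediate from Corollary \ref{URD}, so assume $D$ is a URD and $\tspec(D[X])$ is treed. By Corollary \ref{URD} it suffices to show that each nonzero principal ideal $fD[X]\subneq D[X]$ has finitely many minimal primes. Each such minimal prime is a $t$-prime, and I would sort them by their contraction to $D$. The minimal primes $Q$ with $Q\cap D=(0)$ are uppers to zero, and $Q\ni f$ exactly when the associated monic irreducible of $K[X]$ ($K$ the quotient field of $D$) divides $f$; hence there are only finitely many of them. For a minimal prime $Q$ with $P:=Q\cap D\neq(0)$, the key point is that $Q=P[X]$: a $t$-prime of $D[X]$ lying over a nonzero prime is necessarily the extended ideal $P[X]$ (with $P$ a $t$-prime of $D$), which is the polynomial $t$-ideal fact underlying \cite{HZ,HH}. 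Granting this, $f\in P[X]$ forces $c(f)\subseteq P$, so $P$ contains a minimal prime $P_0$ of $c(f)$; then $P_0[X]$ is a prime containing $f$ with $P_0[X]\subseteq P[X]=Q$, and minimality of $Q$ gives $Q=P_0[X]$. Thus the minimal primes of $fD[X]$ with nonzero contraction are precisely the $P[X]$ with $P$ minimal over $c(f)$, and these are finite in number because $c(f)$ is finitely generated, so $c(f)_t$ is a $t$-ideal of finite type in the URD $D$ and has finitely many minimal primes (Corollary \ref{URD}). Hence $fD[X]$ has finitely many minimal primes, and $D[X]$ is a URD.

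I expect the main obstacle to be the classification of the minimal primes of $fD[X]$ with nonzero contraction, which rests entirely on the fact that the $t$-primes of $D[X]$ over a nonzero prime $P$ are exactly $P[X]$. Without it one would have to rule out uppers to a nonzero prime $P$ with $c(f)\not\subseteq P$ (these genuinely contain $f$, as the example $X^2+X+1\in(3,X-1)$ over $\mathbb{Z}$ shows); they are excluded precisely because such uppers fail to be $t$-ideals whereas minimal primes of $fD[X]$ are $t$-ideals. Note that treedness of $\tspec(D[X])$ enters only through the URD characterization and not in the minimal-prime count; all the remaining verifications (behaviour of extended ideals, contraction of $t$-ideals, and finiteness of the factorization of $f$ over $K[X]$) are routine given the recalled polynomial $t$-ideal facts.
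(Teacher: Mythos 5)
Part (1) and the forward direction of (2) are fine and agree with the paper's (very brief) argument. The gap is in the backward direction of (2), in your classification of the minimal primes of $fD[X]$ with nonzero contraction. You rest the entire count on the assertion that a $t$-prime $Q$ of $D[X]$ with $Q\cap D=P\neq(0)$ must equal $P[X]$. This is not among the polynomial $t$-ideal facts recalled before the proposition (those give only that $Q\cap D$ is a $t$-prime of $D$, not that $Q$ is extended), and it is not true for arbitrary domains: it is precisely the UMT-domain property of \cite[Theorem 3.1]{HZ}, and the paper itself invokes it only in the proof of Theorem \ref{polynomial2}, \emph{after} first establishing that $D$ is a UMT-domain by means of Proposition \ref{dobbs}. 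A URD need not be a UMT-domain (the domain $D$ of Example \ref{WMnotURD} is a URD that is integrally closed, quasilocal and not Pr\"ufer), so extendedness cannot be assumed; and while in the present setting the UMT property does eventually follow from treedness of $\tspec(D[X])$, that deduction is the content of Theorem \ref{polynomial2}, which is proved later and uses this very proposition, so appealing to it here would be circular. Your parenthetical justification --- that uppers to nonzero primes ``fail to be $t$-ideals'' --- is exactly the unproved point.

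The repair, which is the paper's actual argument, uses treedness of $\tspec(D[X])$ \emph{inside} the minimal-prime count, precisely the ingredient you declare unnecessary there. Given a minimal prime $P$ of $fD[X]$ with $P\cap D\neq(0)$, choose a $t$-maximal ideal $N=M[X]\supseteq P$; then $c(f)_t\sub M$, so one may pick $Q\sub M$ minimal over $c(f)_t$, and $Q[X]$ is a $t$-prime of $D[X]$ containing $f$ and lying in $M[X]$ together with $P$. Treedness makes $P$ and $Q[X]$ comparable, minimality of $P$ over $f$ forces $P\sub Q[X]$, and treedness again shows $P$ is the only minimal prime of $f$ contained in $Q[X]$. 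This injects the set of such minimal primes into the finite (since $D$ is a URD) set of minimal primes of $c(f)_t$, with no appeal to extendedness of $t$-primes.
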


\begin{proof} (1). This follows easily from Corollary \ref{URD}, (i) $\lra$ (vi)
 and the fact that, for each $P\in \tspec(D)$,  $P[X]$ is a $t$-ideal of $D[X]$.

(2). By Corollary \ref{URD}, (i) $\lra$ (vi), we have only to show
that if $D$ is a URD and   $\tspec(D[X])$ is treed, then each
nonzero polynomial $f(X)\in D[X]$ has at most finitely many minimal
primes.

If $f(X)\in D[X]$ is contained in the upper to zero $p(X)K[X]\cap
D[X]$ of $D[X]$, then $p(X)$ divides $f(X)$ in $K[X]$. Thus $f(X)$
is contained in at most finitely many uppers to zero.

Assume that $f(X)$ has a minimal prime $P$ such that $P\cap D\neq
(0)$ and let $N$ be a $t$-maximal ideal of $D[X]$ containing $P$.
Then $N=M[X]$, with $M\in \tmax(D)$ and $c(f)_t\in M$, in particular
$c(f)_t\neq D$.
 Let $Q\sub M$ be a minimal prime of $c(f)_t$.  We have
  $f(X)\in c(f)_t[X]\sub  Q[X]$ and $Q[X]$ is a $t$-ideal of $D[X]$. Then $P\sub Q[X]$
  (since $\tspec(D[X])$ is treed) and  $P$ is the
  only minimal prime of $f(X)$ contained in $Q[X]$. But, since $D$ is a URD,
   $c(f)_t$ has only finitely many minimal primes (Corollary \ref{URD}).
 We conclude that $f(X)$ has finitely many minimal primes.
 \end{proof}

\begin{ex}\label{WMnotURD}
An explicit example of a  URD $D$ such that $D[X]$ is not a URD is
given by $D= \overline{\mb Q}+T\mb R[T]_{(T)}$, where $\overline{\mb
Q}$ is the algebraic closure of the rational field $\mb Q$ in the real field $\mb R$ and $T$
is an indeterminate over $\mb R$. Note that $D$ is quasilocal and
one dimensional with maximal ideal $M= T\mb R[T]_{(T)}$. The
quotient field of $D$ is $K=\mb R(T)$ and the $t$-maximal ideal
$M[X]$ of $D[X]$ contains the uppers to zero $P_u = (X-u)K[X]\cap
D[X]$, for all $u\in{\mb R}\setminus\overline{\mb Q}$. Indeed, let
$u\in {\mb R}\setminus\overline{\mb Q}$. Since $D$ is integrally
closed, by \cite[Corollary 34.9]{Gi}, $P_u=(X-u)(D:D+uD)[X]$.
Clearly, $(D:D+uD)$ is a proper $t$-ideal of $D$ and
$M\sub(D:D+uD)$, and hence $M=(D:D+uD)$. Thus $P_u=(X-u)M[X]\sub
M[X]$, as desired. Now, since uppers to zero are height one
$t$-prime ideals, we conclude that $\tspec(D[X])$ is not treed.

We note that both $D$ and $D[X]$ are weakly Matlis domains. In fact,
if a domain $D$ has a unique $t$-maximal ideal $M$, then $D[X]$ is a
weakly Matlis domain. Indeed, a $t$-prime of $D[X]$ is either a
height one $t$-maximal upper to zero or it is contained in $M[X]$.
In addition, since a nonzero polynomial is contained in at most
finitely many uppers to zero, $D[X]$ has $t$-finite character.
\end{ex}

The previous example can be generalized in the sense of the
following result. We are thankful to David Dobbs for providing
Proposition \ref{dobbs} to the third author. Evan Houston also
helped.

\begin{prop} \label{dobbs}[D. Dobbs] Let $D$ be a quasilocal domain
with maximal ideal $M$ and $X$ an indeterminate over $D$. If the
integral closure of $D$ is not a Pr\"ufer domain, then $M[X]$
contains  infinitely many uppers to zero.
\end{prop}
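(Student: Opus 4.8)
The plan is to produce, inside $M[X]$, an explicit infinite family of \emph{linear} uppers to zero $P_u=(X-u)K[X]\cap D[X]$, in the spirit of the family $\{P_u\}$ exhibited in Example \ref{WMnotURD}. Write $K$ for the quotient field of $D$ and let $D'$ be the integral closure of $D$ (so $D'$ also has quotient field $K$). Since $D'$ is not a Pr\"ufer domain, it has a maximal ideal $N$ for which the localization $V:=D'_N$ is \emph{not} a valuation domain. Note that $V$ is integrally closed (a localization of the integrally closed $D'$) and quasilocal, and that $N$ lies over $M$ because $D'$ is integral over the quasilocal domain $D$; hence the maximal ideal $N_V=ND'_N$ of $V$ satisfies $N_V\cap D=M$. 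Localizing the integral closure in this way is what makes both tools below available simultaneously.

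The crucial point is that integral closedness upgrades a single ``bad'' element into an infinite supply. Since $V$ is not a valuation domain, I would choose $w\in K$ with $w\notin V$ and $w^{-1}\notin V$. Because $V$ is integrally closed, $w^{k}\notin V$ for every $k\ge 1$ (otherwise $w$ would be integral over $V$, forcing $w\in V$), and likewise $w^{-k}\notin V$; moreover the elements $w,w^{2},w^{3},\dots$ are pairwise distinct. Thus every $u=w^{k}$ satisfies $u\notin V$ and $u^{-1}\notin V$, and these $u$ are pairwise distinct.

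Fixing $u=w^{k}$, I would next show $(X-u)K[X]\cap V[X]\sub N_V[X]$. Applying \cite[Corollary 34.9]{Gi} to the integrally closed domain $V$ gives $(X-u)K[X]\cap V[X]=(X-u)(V:V+uV)[X]$, exactly as in Example \ref{WMnotURD}. Since $u,u^{-1}\notin V$ and $V$ is quasilocal, the fractional ideal $V+uV$ is not principal, hence not invertible, so its trace ideal $(V+uV)(V:V+uV)$ is a proper ideal of $V$ and therefore lies in $N_V$. As the coefficients of any $(X-u)p(X)$ with $p\in(V:V+uV)[X]$ belong to $(V:V+uV)+u(V:V+uV)=(V+uV)(V:V+uV)\sub N_V$, the containment follows. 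Contracting to $D[X]$ (and using $D[X]\sub V[X]$) gives $P_u=(X-u)K[X]\cap D[X]=\big((X-u)K[X]\cap V[X]\big)\cap D[X]\sub N_V[X]\cap D[X]=(N_V\cap D)[X]=M[X]$, while $P_u$ is the upper to zero of $D[X]$ associated to the irreducible $X-u\in K[X]$. For distinct $k$ the polynomials $X-w^{k}$ are non-associate in $K[X]$, so the primes $P_{w^{k}}$ are distinct; hence $M[X]$ contains the infinitely many uppers to zero $\{P_{w^{k}}\}_{k\ge1}$.

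The main obstacle, and the reason for replacing $D$ by $V=D'_N$, is that the original $D$ need be neither integrally closed nor able to detect non-invertibility through a single maximal ideal. Gilmer's formula for $(X-u)K[X]\cap V[X]$ requires integral closedness, while the two reductions ``$V+uV$ non-invertible because $u,u^{-1}\notin V$'' and ``a proper ideal is contained in $N_V$'' both require $V$ quasilocal; localizing the integral closure secures both at once, and the contraction identity $N_V[X]\cap D[X]=M[X]$ transports everything back to $M[X]$. The one genuinely delicate step is verifying that integral closedness forces \emph{all} powers $w^{k}$ to stay outside $V$, since this is precisely what turns one non-valuation obstruction into infinitely many uppers to zero rather than a single one.
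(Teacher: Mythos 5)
Your proof is correct, but it takes a genuinely different route from the paper's. The paper's argument stays entirely inside $D$: it invokes Dobbs' theorem from \cite{D} to produce a single $u\in K$ for which $D\subset D[u]$ fails INC (equivalently, the upper to zero $I_u$ lies in $M[X]$), then propagates this to all powers $u^n$ by observing that $D[u^n]\subset D[u]$ is integral (hence INC) and that a composite of INC extensions is INC, so $D\subset D[u^n]$ must also fail INC; distinctness of the $u^n$ comes from $u$ not being a root of unity. You instead pass to the integral closure, localize at a maximal ideal $N$ where $D'_N=V$ is not a valuation ring, pick $w$ with $w,w^{-1}\notin V$, use integral closedness of $V$ to keep every power $w^k$ (and $w^{-k}$) outside $V$, and then compute the uppers explicitly via \cite[Corollary 34.9]{Gi} together with the trace-ideal observation that $V+w^kV$ is non-invertible over the quasilocal $V$, before contracting back to $M[X]$. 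Both proofs manufacture the infinite family from powers of one element, and both ultimately live in the Gilmer--Hoffmann circle of ideas, but yours is self-contained modulo Gilmer's formula (which the paper already uses in Example \ref{WMnotURD}) and makes the uppers completely explicit as $(X-w^k)(V:V+w^kV)[X]\cap D[X]$, whereas the paper's is shorter because it black-boxes the existence of the initial bad element into the cited theorem of \cite{D} and never needs to leave $D$ for its integral closure. All the individual steps you use check out: the maximal ideals of $D'$ all contract to $M$ by integrality, $w^k\in V$ would make $w$ a root of the monic $T^k-w^k$ and hence an element of the integrally closed $V$, and non-invertibility of $V+uV$ over a quasilocal domain does force the trace ideal into $N_V$, which is exactly what pushes the coefficients $a_{j-1}-ua_j$ into $N_V$.
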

\begin{proof}  Let $K$ be the quotient field of $D$. By \cite[Theorem]{D},
 there exists an element $u \in K$ such that $D \subset D[u]$ does not satisfy incomparability (INC).
  On the other hand, for
 each positive integer $n$, notice that $D[u^n] \subset D[u]$ is integral and
 hence satisfies INC. As the composite of INC extensions is an INC  extension,
 it follows that $D\subset D[u^n]$ does not satisfy INC. Since $u$  is not integral
 over $D$ (because integral implies INC), $u$ is not a root of  unity, and so the elements
 $u^n$ are all distinct, for $n\geq 1$. Thus, there are infinitely many elements $x\in K$
 (the various  $u^n$) such that $D \subset D[x]$ does not satisfy INC. Equivalently, there
 are infinitely  many $x \in K$ such that $I_x:= \ker(D[X] \to D[x]) \subset M[X]$.
 Each such  $I_x$ is an upper to zero (by the First Isomorphism Theorem). Moreover,
  distinct $x$ always lead to distinct $I_x$. Indeed, if $x$ and $y$ are distinct
   nonzero elements of $K$, then $I_x$ and $I_y$ are distinct, since if $y=a/b$ for
    some nonzero elements $a$ and $b$ of $D$, we have $bX-a$ in $I_y$ but not in $I_x$.
\end{proof}

Dobbs' result allows us to characterize unique representation
polynomial rings.

We recall that a domain $D$ is called a UMT-domain if each upper to
zero $P$ of $D[X]$ is a $t$-maximal ideal \cite{HZ}; this is
equivalent to say that $D_P$ has Pr\"ufer integral closure for each
$P\in \tspec(D)$ \cite[Theorem 1.5]{FGH}. A PVMD is precisely an
integrally closed UMT-domain \cite[Proposition 3.2]{HZ}. The
UMT-property is preserved by polynomial extensions  \cite[Theorem
2.4]{FGH}.

\begin{thm}\label{polynomial2}  Let $D$ be a domain and $X$ an indeterminate over
$D$.  The following conditions are equivalent:
\begin{itemize}

\item[(i)] $D[X]$ is a URD;
\item[(ii)] $D$ is a UMT-domain and a URD.
\end{itemize}
\end{thm}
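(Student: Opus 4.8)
The plan is to prove both implications by using Proposition~\ref{polynomial} to reduce everything to a question about the tree structure of $\tspec(D[X])$, and then to let the UMT hypothesis interact with Dobbs' Proposition~\ref{dobbs}. Throughout I would freely use the characterization of URD's in Corollary~\ref{URD} (in particular (i)$\,\lra\,$(vi)), the fact from the preliminaries that height-one primes are $t$-ideals, that $Q[X]=Q_t[X]$ is a $t$-prime of $D[X]$ whenever $Q\in\tspec(D)$, and that the contraction to $D$ of a $t$-ideal of $D[X]$ that is not an upper to zero is a $t$-ideal of $D$.

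For (i)$\,\Rightarrow\,$(ii): Proposition~\ref{polynomial}(1) already yields that $D$ is a URD, so the only thing left to prove is that $D[X]$ being a URD forces $D$ to be a UMT-domain. First I would record that, since $D[X]$ is a URD, $\tspec(D[X])$ is treed (Corollary~\ref{URD}). I would then argue contrapositively: if $D$ is not a UMT-domain, then by \cite[Theorem 1.5]{FGH} there is a $t$-prime $Q$ of $D$ for which the quasilocal domain $D_Q$ (with maximal ideal $QD_Q$) has non-Pr\"ufer integral closure. Applying Proposition~\ref{dobbs} to $D_Q$ produces infinitely many uppers to zero of $D_Q[X]$ inside $QD_Q[X]$; contracting two of them to $D[X]$ gives distinct uppers to zero $P_1\neq P_2$ with $P_1,P_2\sub QD_Q[X]\cap D[X]=Q[X]$. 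Each $P_i$ is a height-one prime, hence a $t$-ideal, and two distinct uppers to zero are incomparable; but $(P_1+P_2)_t\sub (Q[X])_t=Q[X]\subneq D[X]$, so $P_1$ and $P_2$ are incomparable $t$-primes that are not $t$-comaximal. This contradicts the treedness of $\tspec(D[X])$, so $D$ must be a UMT-domain.

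For (ii)$\,\Rightarrow\,$(i): assuming $D$ is a UMT-domain and a URD, Proposition~\ref{polynomial}(2) reduces the task to showing that $\tspec(D[X])$ is treed, i.e.\ that any two incomparable $t$-primes $P_1,P_2$ of $D[X]$ are $t$-comaximal. Here I would invoke the description of the $t$-spectrum of a polynomial ring over a UMT-domain \cite{FGH, HZ}: every $t$-prime of $D[X]$ is either an upper to zero (and, by the UMT hypothesis, $t$-maximal) or is extended, of the form $Q[X]$ with $Q\in\tspec(D)$. If either $P_i$ is an upper to zero it is $t$-maximal, and then $(P_1+P_2)_t\neq D[X]$ would place both inside a common $t$-maximal ideal $N$ with $P_1=N$, forcing $P_2\sub P_1$ and contradicting incomparability; hence $P_1,P_2$ are $t$-comaximal in this case. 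Otherwise $P_1=Q_1[X]$ and $P_2=Q_2[X]$ with $Q_1,Q_2\in\tspec(D)$ incomparable; since $D$ is a URD, $\tspec(D)$ is treed (Corollary~\ref{URD}), so $(Q_1+Q_2)_t=D$, and therefore $(P_1+P_2)_t=((Q_1+Q_2)[X])_t=(Q_1+Q_2)_t[X]=D[X]$. In every case incomparable $t$-primes are $t$-comaximal, so $\tspec(D[X])$ is treed and $D[X]$ is a URD by Proposition~\ref{polynomial}(2).

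The hard part will be the structural input used in the second implication: that over a UMT-domain every $t$-prime of $D[X]$ with nonzero trace on $D$ is extended, i.e.\ equals $(P\cap D)[X]$. This is exactly where the full strength of the UMT condition is needed, since it excludes the ``fiber'' $t$-primes that would properly contain $Q[X]$ while contracting to $Q$ --- precisely the configurations that break treedness, and precisely the ones that Dobbs' proposition manufactures when the UMT hypothesis fails. I would either quote this classification of $\tspec(D[X])$ from the UMT literature \cite{FGH, HZ}, or reprove it by localizing at $Q$ and using that $D_Q$ has Pr\"ufer integral closure to show that any prime strictly above $Q[X]$ in the fiber over $Q$ fails to be a $t$-ideal.
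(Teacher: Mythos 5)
Your proposal is correct and follows essentially the same route as the paper: both implications reduce to treedness of $\tspec(D[X])$ via Proposition \ref{polynomial}, the forward direction contracts the infinitely many uppers to zero produced by Proposition \ref{dobbs} over a localization $D_Q$ with non-Pr\"ufer integral closure to contradict treedness, and the reverse direction uses the classification of $t$-primes of $D[X]$ over a UMT-domain (uppers to zero are $t$-maximal; the others are extended, $Q=q[X]$, citing \cite{HZ}) together with treedness of $\tspec(D)$. The only difference is cosmetic (contrapositive phrasing and a little more detail in spelling out why the contracted uppers to zero violate treedness).
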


\begin{proof} (i) $\ra$ (ii). Assume that $D[X]$ is a URD and let $P\in \tspec(D)$.
We claim that $D_P$ has Pr\"ufer integral closure. Deny, by Proposition \ref{dobbs}, the
 ideal $PD_P[X]$ in $D_P[X]$ contains infinitely many uppers to
 zero, say $\{Q_\al\}$. For each $\al$, set $P_\al=Q_\al\cap D[X]$. Then
 the $P_\al$'s are distinct uppers to zero of $D[X]$ contained in $P[X]$,
 which is impossible since $\tspec(D[X])$ is treed. It follows that
 $D$ is a UMT-domain.
 The second assertion follows from Proposition \ref{polynomial}.

(ii) $\ra$ (i). Assume that $D$ is a URD. By Proposition
\ref{polynomial}, we have only to show that  $\tspec(D[X])$ is
treed. Since $D$ is a UMT-domain, uppers to zero of $D$ are
$t$-maximal ideals of height one. Also, if $Q$ is a prime $t$-ideal
of $D[X]$ with $Q\cap D=q\not=(0)$, then $Q=q[X]$ (cf. proof of
Theorem 3.1 in \cite{HZ}). Since $q$ does not contain any two
incomparable prime $t$-ideals, $Q$ does not contain any two
incomparable prime $t$-ideals.
 \end{proof}

\begin{cor} \label{polyPVMD} Let $D$ be a domain and $X$ an indeterminate over
$D$. $D$ is a PVMD and a URD if and only if $D[X]$ is a PVMD and a URD.
\end{cor}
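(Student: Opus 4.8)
The plan is to establish Corollary \ref{polyPVMD} by combining the just-proved Theorem \ref{polynomial2} with the standard fact that the PVMD property transfers between $D$ and $D[X]$. Recall that a PVMD is precisely an integrally closed UMT-domain \cite[Proposition 3.2]{HZ}, and the UMT-property is preserved by polynomial extensions \cite[Theorem 2.4]{FGH}.

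For the forward direction, suppose $D$ is a PVMD and a URD. Since $D$ is a PVMD it is in particular a UMT-domain (being integrally closed UMT), so by Theorem \ref{polynomial2}, (ii) $\ra$ (i), the ring $D[X]$ is a URD. It remains to see that $D[X]$ is a PVMD; this is the classical result that a PVMD is stable under polynomial extension, which I would cite from the literature on PVMD's (it also follows from the characterization of PVMD's as integrally closed UMT-domains together with the fact that integral closure commutes with passing to $D[X]$ and that the UMT-property ascends to $D[X]$ by \cite[Theorem 2.4]{FGH}). Conversely, if $D[X]$ is a PVMD and a URD, then by Theorem \ref{polynomial2}, (i) $\ra$ (ii), we get that $D$ is a URD; and since $D[X]$ is a PVMD, so is $D$, because the PVMD property descends from $D[X]$ to $D$ (again a standard fact, or one may note that $D$ is integrally closed and a UMT-domain as a consequence of $D[X]$ being so).

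The only real content beyond quoting Theorem \ref{polynomial2} is the equivalence ``$D$ is a PVMD $\lra$ $D[X]$ is a PVMD,'' and the main (very mild) obstacle is simply pinning down the right citations for the ascent and descent of the PVMD property along $D\subseteq D[X]$. Since the excerpt already records that a PVMD is exactly an integrally closed UMT-domain and that the UMT-property is polynomially stable, the cleanest route is to observe that $D$ is integrally closed if and only if $D[X]$ is, combine this with the UMT-stability of \cite[Theorem 2.4]{FGH}, and thereby reduce the PVMD equivalence entirely to facts already cited in the paper; the URD equivalence is then handed over wholesale to Theorem \ref{polynomial2}.
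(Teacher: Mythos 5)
Your proposal is correct and is essentially the paper's own (implicit) argument: the corollary is stated there without a written proof, as an immediate consequence of Theorem \ref{polynomial2} together with the standard fact that the PVMD property ascends and descends along $D\subseteq D[X]$, which the paper has already set up via the characterization of PVMD's as integrally closed UMT-domains \cite[Proposition 3.2]{HZ} and the polynomial stability of the UMT-property \cite[Theorem 2.4]{FGH}. Your reduction of the PVMD transfer to ``integrally closed'' plus ``UMT'' is exactly the intended route.
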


 To give more examples of URD's, we will consider domains of the form $D^{(S)}=D+XD_{S}[X]$, where $S$
is a multiplicative set of $D$, in particular of the form $D+XK[X]$,
where $K$ is the quotient field of $D$ \cite{CMZ}. It was shown in
\cite[Proposition 7]{ZURD} that if $D$ is a GCD domain and $S$ is a
multiplicative set such that $D^{(S)}$ is a GCD domain, then
$D^{(S)}$ is a URD if and only if $D$ is a URD. It is not clear how
the $D^{(S)}$ construction will fare in the general case; however
the case when $D^{(S)}$ is a PVMD appears to be somewhat
straightforward.

 Let us call a domain $R$ a pre-Krull domain if every nonzero
principal ideal of $R$ has at most finitely many minimal primes. So
$R$ is a URD if and only if $R$ is pre-Krull and $\tspec(R)$ is
treed (Corollary \ref{URD}).

\begin{lemma}\label{quotient} Let $R$ be an integral domain and $S$ a
multiplicative set of $R$.
\begin{itemize}
\item[(1)] If $R$ is pre-Krull then  $R_S$ is pre-Krull.
\item[(2)] If $R$ is a URD then $R_S$ is a URD.
\end{itemize}
\end{lemma}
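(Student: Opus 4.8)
The plan is to reduce everything to the characterization recorded just before the lemma: a domain is a URD if and only if it is pre-Krull and its $t$-spectrum is treed (Corollary \ref{URD}). Thus for (1) I must only show that localization preserves the pre-Krull property, and for (2), once (1) is in hand, it remains to show that $\tspec(R_S)$ is treed whenever $\tspec(R)$ is. Both halves will rest on the standard order-isomorphism between $\Spec(R_S)$ and the set of primes of $R$ disjoint from $S$, together with one computation comparing the $v$-operations of $R$ and $R_S$ on finitely generated ideals.

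For (1), a nonzero principal ideal of $R_S$ has the form $xR_S$ with $0\neq x\in R$ (a denominator from $S$ is a unit in $R_S$). A prime minimal over $xR_S$ corresponds to a prime $P$ of $R$, disjoint from $S$, that is minimal among primes of $R$ containing $x$ and disjoint from $S$. Such a $P$ is in fact minimal over $xR$ in $R$: any prime $P_0\subseteq P$ minimal over $xR$ is still disjoint from $S$, so minimality of $P$ forces $P_0=P$. Hence the minimal primes of $xR_S$ inject into the minimal primes of $xR$, which are finite in number by hypothesis, and $R_S$ is pre-Krull.

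The heart of (2) is the contraction lemma: if $Q$ is a $t$-prime of $R_S$, then $Q\cap R$ is a $t$-prime of $R$. The key computation is that for a finitely generated ideal $J=(a_1,\dots ,a_n)$ of $R$ one has $(R_S:JR_S)=(R:J)R_S$, since both equal $\bigcap_i a_i^{-1}R_S$ (localization commutes with finite intersections of $R$-submodules of $K$). Consequently, if $y\in J_v^R=(R:(R:J))$ then $y(R:J)R_S\subseteq R_S$, which gives $J_v^R R_S\subseteq (JR_S)_v^{R_S}$. Now take $a_1,\dots ,a_n\in Q\cap R$ and set $J=(a_1,\dots ,a_n)$; since $JR_S\subseteq Q$ and $Q=Q_t$, while $(JR_S)_v=(JR_S)_t$ because $JR_S$ is finitely generated, we obtain $J_v^R R_S\subseteq Q$, whence $J_v^R\subseteq Q\cap R$ (using $J_v^R\subseteq R$). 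As this holds for every finite subset of $Q\cap R$, the ideal $Q\cap R$ is a $t$-ideal, and being the contraction of a prime it is a $t$-prime of $R$.

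Finally I transfer treedness. If a $t$-maximal ideal $N$ of $R_S$ contained two incomparable $t$-primes $Q_1,Q_2$, then contracting would give $t$-primes $Q_1\cap R,Q_2\cap R$ of $R$ both contained in the $t$-prime $N\cap R$; treedness of $\tspec(R)$ forces them to be comparable, and since contraction along $R\hookrightarrow R_S$ is an order-isomorphism onto its image, $Q_1$ and $Q_2$ would be comparable as well, a contradiction. Hence $\tspec(R_S)$ is treed, and by the URD characterization $R_S$ is a URD. I expect the contraction lemma — in particular verifying $J_v^R R_S\subseteq (JR_S)_v^{R_S}$ and thereby that $t$-primes of $R_S$ contract to $t$-primes of $R$ — to be the only genuine obstacle; the pre-Krull step and the treedness transfer are then routine consequences of the prime-spectrum correspondence under localization.
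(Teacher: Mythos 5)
Your proposal is correct and follows essentially the same route as the paper: part (1) via the correspondence between minimal primes of $xR_S$ and minimal primes of $xR$ disjoint from $S$, and part (2) by combining (1) with Corollary \ref{URD} and the fact that a $t$-prime of $R_S$ contracts to a $t$-prime of $R$, which transfers treedness of the $t$-spectrum. The only difference is that the paper simply cites \cite[page 436]{Zputting} for that contraction fact, whereas you prove it directly via $(R_S:JR_S)=(R:J)R_S$ for finitely generated $J$; your verification of that step is sound.
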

\begin{proof} (1). A nonzero principal ideal of $R_{S}$ can be written as $xR_{S}$, where
$x\in R\backslash \{0\}$ and $xR\cap S=\emptyset$. If $Q$ is a minimal prime of $xR_{S}$,
then $Q\cap R$ is a minimal prime of  $xR$.

(2). By part (1) and Corollary \ref{URD} it is enough to show that
$\tspec(R_S)$ is treed. Note that if $M$ is a prime $t$-ideal in
$R_S$ then $M \cap R$ is a prime $t$-ideal in $R$ \cite[page
436]{Zputting}. Now any two prime $t$-ideals $P$ and $Q$ of $R_S$
contained in $M$ are comparable because $P\cap R$ and $Q\cap R$ are
comparable.
\end{proof}

Next we investigate other examples of URD's using the  $D+XD_{S}[X]$
construction.

\begin{lemma}\label{composite} Let $D$ be a domain, $S$ a multiplicative subset of
$D$ and  $X$ an indeterminate over $D$. Let $I$ be an ideal of
$D^{(S)}$ such that $I\cap S \neq \emptyset$. Then $I=JD^{(S)} = J+
XD_S[X]$ for some ideal $J$ of $D$ with $J\cap S  \neq \emptyset$.
Moreover, $I_t=J_t+ XD_S[X]$.
\end{lemma}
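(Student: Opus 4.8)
The plan is to analyze the domain $D^{(S)}=D+XD_S[X]$ using the fact that $X\in D^{(S)}$ is a distinguished element with $X \notin D$, and to exploit the hypothesis $I\cap S\neq\emptyset$ to show that $I$ is ``generated from $D$''. The key structural observation is that $XD_S[X]$ is an ideal of $D^{(S)}$ (in fact a prime, since $D^{(S)}/XD_S[X]\cong D$), and that every element $f\in D^{(S)}$ has a constant term $f(0)\in D$. I would first establish the set $J:=\{f(0):f\in I\}=I\cap D$ (after adjusting by elements of $XD_S[X]\subseteq I$, which I must verify lies in $I$), and check that $J$ is an ideal of $D$ with $J\cap S\neq\emptyset$.

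**The main containments.**

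First I would pick $s\in I\cap S$. Since $s$ is a unit in $D_S$, we have $Xs^{-1}\cdot s = X$, but more usefully $X\cdot D_S[X]\subseteq D^{(S)}$ and $s$ is invertible in $D_S$, so I would show $XD_S[X]=sX D_S[X]\subseteq I\cdot D_S[X]\subseteq I$, using that $s\in I$ and $XD_S[X]$ absorbs $D_S$-multiplication while staying inside $D^{(S)}$. This gives $XD_S[X]\subseteq I$. Setting $J=I\cap D$, the inclusion $J+XD_S[X]\subseteq I$ is then immediate. For the reverse inclusion, given $f\in I$ write $f=f(0)+Xg$ with $Xg\in XD_S[X]\subseteq I$; hence $f(0)=f-Xg\in I\cap D=J$, so $f\in J+XD_S[X]$. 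This proves $I=J+XD_S[X]=JD^{(S)}$, and $J\cap S\supseteq\{s\}\neq\emptyset$ since $s\in I\cap D$.

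**The $t$-ideal statement.**

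For the final assertion $I_t=J_t+XD_S[X]$, I would use the recalled fact that for an ideal $J$ of $D$ one has $J_t[X]=J[X]_t$, adapted to the $D+XD_S[X]$ setting, together with the observation that extending by the prime $XD_S[X]$ interacts well with the $t$-operation because $XD_S[X]$ contains a unit of $D_S$ times $X$. The cleanest route is to note $I$ contains the element $s\in S$, so $I$ is not contained in any $t$-prime lying over a proper $t$-ideal of $D$ that misses $S$; combined with $I=JD^{(S)}$, one computes $I_t$ by localizing or by the formula $(JD^{(S)})_t=(J_t)D^{(S)}$, which reduces to $J_t+XD_S[X]$ since $XD_S[X]\subseteq J_t+XD_S[X]$ already.

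**Expected obstacle.**

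The main obstacle is the last equality $I_t=J_t+XD_S[X]$: proving that the $t$-operation on $D^{(S)}$ restricts compatibly to the $t$-operation on $D$ through the constant-term map requires care, since $t$-closure is computed via $v$-closure and divisorial hulls, and $D_S$-coefficients complicate the duals $(D^{(S)}:I)$. I expect to need the explicit description of $t$-ideals in composite constructions (the behavior of $c(f)_t$ and uppers to zero noted just before this lemma), and to verify that the presence of the unit $s\in S\cap I$ forces the $t$-closure to stay ``horizontal'', i.e.\ governed entirely by $J_t$ in the $D$-direction while the $XD_S[X]$ part is already $t$-closed. The set-theoretic containments in the first part are routine; the genuine content is this compatibility of $t$-closures, which I would handle by reducing to $t$-maximal ideals of $D^{(S)}$ and using that those containing $I$ must contain $XD_S[X]$ and contract to $t$-maximal ideals of $D$ containing $J$.
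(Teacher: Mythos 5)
The first half of your argument (that $XD_S[X]\subseteq I$ because $s\in I\cap S$ is a unit of $D_S$, hence $XD_S[X]=s\,XD_S[X]\subseteq sD^{(S)}\subseteq I$, and then $J=I\cap D$ works) is correct and is essentially what the paper does, though you should write the absorption step as $s\cdot(XD_S[X])\subseteq I$ rather than $I\cdot D_S[X]\subseteq I$, since $D_S[X]$ is not contained in $D^{(S)}$.

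The genuine gap is in the second half. You correctly identify $(JD^{(S)})_t=J_tD^{(S)}$ as the crux, but you never prove it, and the route you sketch --- ``reducing to $t$-maximal ideals of $D^{(S)}$'' and tracking which ones contain $I$ --- cannot work as stated: the closure computed by intersecting localizations at $t$-maximal ideals is the $w$-closure, not the $t$-closure, and $w\neq t$ in general (the paper is careful elsewhere to invoke $w=t$ only for PVMD's, which $D^{(S)}$ need not be here). The $t$-closure must be computed as the union of $v$-closures of finitely generated subideals, which forces you to compute duals. The paper's proof does exactly this: it first establishes $(AD^{(S)})_v=A_vD^{(S)}$ for finitely generated $A\subseteq D$ with $A\cap S\neq\emptyset$, using the inverse formula $(AD^{(S)})^{-1}=A^{-1}D^{(S)}$ from \cite[Lemma 3.1]{Zwell} together with a constant-term argument for the reverse inclusion. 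It then handles an arbitrary finitely generated subideal $F\subseteq I$ --- which need not be extended from $D$ and may miss $S$ --- by enlarging it to $(F,s)_v=(A+XD_S[X])_v$ for a suitable finitely generated $A\subseteq J$, before taking the union. Your proposal addresses neither the dual computation nor the passage from arbitrary finitely generated subideals of $I$ to extended ones, so the ``moreover'' clause remains unproved.
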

\begin{proof} Since $I\cap S \neq \emptyset$, we have $XD_S[X]\sub I$,
hence $I = J+ XD_S[X]=JD^{(S)}$ for some ideal $J$ of $D$ with
$J\cap S \neq \emptyset$.

For the second statement, we first show
that if $A$ is a finitely generated ideal of $D$ such that $A\cap
S\neq \emptyset$, then $(AD^{(S)})_v=A_vD^{(S)}$.
 By \cite[Lemma 3.1]{Zwell}, $(AD^{(S)})^{-1}=A^{-1}D^{(S)}$, so
 $A_vD^{(S)}(AD^{(S)})^{-1}\sub D^{(S)}$, and hence $A_vD^{(S)}\sub
 (AD^{(S)})_v$. For the reverse inclusion, let $f=a_0+Xg(X)\in
 (AD^{(S)})_v$. Then $f(AD^{(S)})^{-1}\sub D^{(S)}$,
  that is, $fA^{-1}D^{(S)}\sub D^{(S)}$. In particular, $a_0A^{-1}\sub D$.
   So $a_0\in A_v$. Thus $f\in A_v+XD_S[X]=A_vD^{(S)}$. Hence $(AD^{(S)})_v=A_vD^{(S)}$.

   Next, note that if $F$ is a nonzero
finitely generated subideal of $I=J+XD_{S}[X]$ such that $F\cap
S=\emptyset$ then for any
$s\in J\cap S$ we have $F_{v}\subseteq (F,s)_{v}=(A+XD_{S}[X])_{v}$
$=A_{v}+XD_{S}[X]=A_{v}D^{(S)}$, for some finitely generated ideal
$A\sub J$ of $D$. So $I_t=(JD^{(S)})_t=\bigcup\{(AD^{(S)})_v;\,
   A\sub J \,\mbox{finitely generated and}\, A\cap S\neq
   \emptyset\}= \bigcup\{A_vD^{(S)};\,
   A\sub J \,\mbox{finitely generated and}\, A\cap S\neq
   \emptyset\}= J_tD^{(S)}=J_t+XD_S[X]$.
\end{proof}

\begin{thm}\label{prekrull} Let $D$ be a domain, $S$ a multiplicative subset of $D$ and  $X$ an
 indeterminate over $D$. Then
$D^{(S)}=D+XD_{S}[X]$ is a pre-Krull domain if and only if $D$ and
$D_S[X]$ are pre-Krull domains.
\end{thm}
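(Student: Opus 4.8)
The plan is to reduce the statement to an analysis of the prime spectrum of $R=D^{(S)}=D+XD_S[X]$, exploiting the two identifications $R/XD_S[X]\cong D$ (so that $XD_S[X]$ is a prime of $R$ with $XD_S[X]\cap D=(0)$) and $R_S=D_S[X]$. First I would establish the dichotomy governing $\Spec(R)$. If a prime $Q$ meets $S$, say $s\in Q\cap S$, then $sR=sD+XD_S[X]$ because $s$ is a unit of $D_S$; hence $XD_S[X]\subseteq Q$. Thus every prime of $R$ either contains $XD_S[X]$ (type A) or is disjoint from $S$, where by type B I mean those not containing $XD_S[X]$ (these being automatically disjoint from $S$). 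The type A primes are exactly the $P+XD_S[X]$ with $P\in\Spec(D)$, and one checks that $(P+XD_S[X])\cap D=P$; the primes disjoint from $S$ correspond order-isomorphically, via $Q\mapsto QR_S$ and $\mathfrak q\mapsto\mathfrak q\cap R$, to $\Spec(D_S[X])$. A short but essential observation is that no type A prime can be contained in a type B prime, so the two families do not interfere with minimality.

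For the forward implication, $D_S[X]=R_S$ is a localization of $R$, hence pre-Krull by Lemma \ref{quotient}(1). To see that $D$ is pre-Krull, fix $0\neq a\in D$ and let $P$ be a minimal prime of $aD$ in $D$. Since $a\in P$, the type A prime $P+XD_S[X]$ contains $aR$, so it contains a minimal prime $Q_P$ of $aR$. Intersecting with $D$ gives $aD\subseteq Q_P\cap D\subseteq P$, and minimality of $P$ over $aD$ forces $Q_P\cap D=P$. Distinct minimal primes of $aD$ therefore produce minimal primes of $aR$ with distinct contractions, hence distinct; as $R$ is pre-Krull, $aD$ has only finitely many minimal primes.

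For the converse, assume $D$ and $D_S[X]$ are pre-Krull and take $0\neq f=a_0+Xg\in R$ with $a_0\in D$ and $g\in D_S[X]$. I would bound the minimal primes of $fR$ by type. A type A prime $P+XD_S[X]$ contains $f$ exactly when $a_0\in P$; if $a_0\neq0$, then a minimal such prime forces $P$ to be minimal over $a_0D$, of which there are finitely many, whereas if $a_0=0$ then $f\in XD_S[X]$ and the only possible minimal type A prime is $XD_S[X]$ itself. A type B minimal prime $Q$ of $fR$ maps to a prime $QR_S\supseteq fD_S[X]$ that is minimal over $fD_S[X]$ (any strictly smaller prime over $fD_S[X]$ would contract, through the order isomorphism, to a prime over $fR$ strictly inside $Q$), and $Q=QR_S\cap R$ recovers $Q$; hence these inject into the finitely many minimal primes of $fD_S[X]$ in $D_S[X]$. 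Since the minimal primes of $fR$ are exhausted by these finitely many type A and type B possibilities, $R$ is pre-Krull.

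The main obstacle I anticipate is the spectral bookkeeping rather than any single hard estimate: one must set up the type A / type B dichotomy cleanly, verify that localization at $S$ and passage to $R/XD_S[X]$ carry minimal primes of a principal ideal to minimal primes and back, and treat the degenerate case $a_0=0$ separately, where $f$ lies in $XD_S[X]$ and the ``constant part'' of the factorization disappears.
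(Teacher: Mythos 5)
Your proof is correct and follows essentially the same route as the paper's: both arguments rest on the two identifications $D^{(S)}_S=D_S[X]$ and $D^{(S)}/XD_S[X]\cong D$ and split the minimal primes of a principal ideal accordingly, handling the constant term $a_0$ through $D$ and the rest through the localization. The only differences are cosmetic: you partition $\Spec(D^{(S)})$ by whether a prime contains $XD_S[X]$ rather than by whether it meets $S$, and in the forward direction you contract a chosen minimal prime of $aD^{(S)}$ back to $D$ instead of extending minimal primes of $aD$, which lets you avoid the paper's case split at that point.
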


\begin{proof} Suppose that $D$ and $D_S[X]$ are pre-Krull domains. Note that
  each prime ideal of $D^{(S)}$ comes
from either of the following disjoint sets of primes: $\mc L=\{P\in
\Spec(D^{(S)}):P\cap S\neq \emptyset \}$ and $\mc M=\{P\in
\Spec(D^{(S)}):P\cap S=\emptyset \}$. Now let $f(X)\in D^{(S)}$, $f(X)\neq 0$.
Then $ f(X)=a_{0}+Xg(X)$ where $g(X)\in D_{S}[X]$ and $a_{0}\in D$.
We have two cases: (i) $ a_{0}\neq 0$ and (ii) $a_{0}=0$.

(i) Assume that $ a_{0}\neq 0$  and let $P$ be a prime ideal minimal over $f(X)$.
(a) Suppose first that $P\in \mc L$. We claim that $P=p+XD_{S}[X]$,
where $p$ is a prime  minimal over $a_{0}$. This is because $P\cap
S\neq \emptyset $ and so $P=p+XD_{S}[X]$ (Lemma \ref{composite}) and
in addition $Xg(X)\in XD_{S}[X]$, which leaves $a_{0}\in p$. If $p$
is not minimal over $a_{0}D$ and say $ q\subsetneq p$ is, then
$q+XD_{S}[X]$ would  contain $f(X)$ and be properly contained in $P$
contradicting the minimality of $P$. Now as $ a_{0}D $ has finitely
many minimal primes we conclude that a finite number of minimal
primes of $f$ come from $\mc L$. (b) Suppose then that $P\in \mc M$.
Since $P\cap S=\emptyset $ we have $P=PD_{S}[X]\cap D^{(S)}$
\cite[pp. 426-427]{CMZ}. So $P$ is a minimal prime of $f(X)$ if and
only if $PD_{S}[X]$  is a minimal prime of $ f(X)D_{S}[X]$. Since
$D_{S}[X]$ is pre-Krull, $f(X)$ has finitely many minimal primes
from $\mc M$. Combining (a) and (b), we conclude that $f(X)$ has
finitely many minimal primes.

(ii) If $a_{0}=0$ then $f(X)=\frac{X^{r}}{s}g(X)$ where $g(X)\in
D^{(S)}$ and $s\in S$. Now as $f(X)\in XD_{S}[X]$, no prime ideal
that intersects $S$ nontrivially can be minimal over $f(X)$. So the
number of prime ideals minimal over $f(X)$ is the same as the number
of prime ideals minimal over $ fD_{S}[X]$ which is finite because
$D_{S}[X]$ is pre-Krull.

Combining (i) and (ii) we have the conclusion.

For the converse suppose that $D^{(S)}$ is pre-Krull.  Then, as $S$
is a multiplicative set in $D^{(S)}$  and as $D_S[X]=D^{(S)}_S$, we
conclude that $D_S[X]$ is pre-Krull (Lemma \ref{quotient}). Now let
$d$ be a nonzero nonunit of $D$. We claim that $dD$  has finitely
many minimal primes. Suppose on the contrary that $dD$ has
infinitely many minimal primes: for  such a minimal prime $Q$, we
have two cases (i) $Q\cap S\neq\emptyset$ or (ii) $Q\cap
S=\emptyset$.

In case (i) for every minimal prime $Q$ of $dD$, $QD^{(S)}=Q+XD_S[X]$ is a
minimal prime over  $dD^{(S)}$. But as $D^{(S)}$ is pre-Krull, there
can be only finitely many minimal primes of $dD^{(S)}$. Consequently,
there are only finitely many minimal primes $Q$ of $dD$ with $Q\cap
S\neq\emptyset$. In case (ii) $Q\cap S=\emptyset$, then $QD_S$ is a
minimal prime of $dD_S$ and so $QD_S[X]$ is a minimal prime of
$dD_S[X]$. But $dD_S[X]$ has a finite number of minimal primes,
because $D_S[X]$ is pre-Krull, a contradiction.
\end{proof}

\begin{cor}\label{corollaryA} Let $D$ be a domain, $S$ a multiplicative subset of $D$ and  $X$ an
indeterminate over $D$.  If D and
$D_S[X]$ are  URD's,  then $D^{(S)}$ is a URD if and only if
$\tspec(D^{(S)})$ is treed.
\end{cor}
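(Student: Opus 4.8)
The plan is to reduce the URD property of $D^{(S)}$ to the two separate conditions that appear in the characterization recorded just before Lemma \ref{quotient}, namely that a domain $R$ is a URD if and only if $R$ is pre-Krull and $\tspec(R)$ is treed (Corollary \ref{URD}). Applied to $R = D^{(S)}$ this says that $D^{(S)}$ is a URD exactly when $D^{(S)}$ is pre-Krull and $\tspec(D^{(S)})$ is treed. The idea, then, is to show that under the present hypotheses the pre-Krull condition is automatic, so that the only surviving requirement is treedness of the $t$-spectrum --- which is precisely the condition asserted in the statement.

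To see that $D^{(S)}$ is pre-Krull, I would first invoke the hypotheses: since $D$ and $D_S[X]$ are URD's, each is in particular a pre-Krull domain (again by Corollary \ref{URD}). Theorem \ref{prekrull} then states that $D^{(S)} = D + XD_S[X]$ is pre-Krull if and only if both $D$ and $D_S[X]$ are pre-Krull. Feeding the two pre-Krull facts into that theorem immediately yields that $D^{(S)}$ is pre-Krull. This is the only place where the full strength of the hypotheses is used, and the real work has already been carried out in Theorem \ref{prekrull}.

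With $D^{(S)}$ now known to be pre-Krull, the characterization collapses to the desired equivalence. Concretely, if $D^{(S)}$ is a URD then its $t$-spectrum is treed by Corollary \ref{URD}; conversely, if $\tspec(D^{(S)})$ is treed, then combining this with the pre-Krull property just established gives, again via Corollary \ref{URD}, that $D^{(S)}$ is a URD. I do not anticipate any genuine obstacle here: the statement is essentially a bookkeeping synthesis of Corollary \ref{URD} and Theorem \ref{prekrull}, and the only substantive content --- the treedness of $\tspec(D^{(S)})$, which in general can fail even when $D$ and $D_S[X]$ are well behaved --- is built into the hypothesis and conclusion rather than being derivable from pre-Krullness alone. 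This is exactly why treedness cannot be dropped and must appear as the remaining condition.
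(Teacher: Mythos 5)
Your proof is correct and is exactly the paper's argument: the authors also deduce the result by combining Corollary \ref{URD} (URD $=$ pre-Krull $+$ treed $t$-spectrum) with Theorem \ref{prekrull} (pre-Krullness of $D^{(S)}$ from that of $D$ and $D_S[X]$). Nothing to add.
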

\begin{proof} By Corollary \ref{URD} and Theorem \ref{prekrull}.
\end{proof}

\begin{cor}\label{corollaryB} Let $D$ be a domain with quotient field $K$ and $X$ an indeterminate over $D$. Then
$R=D+XK[X]$ is a URD if and only if $D$ is a URD.
\end{cor}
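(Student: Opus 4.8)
The plan is to specialize Corollary~\ref{corollaryA} to the multiplicative set $S=D\setminus\{0\}$, for which $D_S=K$ and $R=D+XK[X]=D^{(S)}$. Since $K[X]$ is a PID, it is both pre-Krull and a URD, so the conditions on the factor $D_S[X]$ required by Theorem~\ref{prekrull} and Corollary~\ref{corollaryA} hold automatically. By Theorem~\ref{prekrull}, $R$ is pre-Krull if and only if $D$ is pre-Krull (the factor $D_S[X]=K[X]$ being pre-Krull for free). Recalling that a domain is a URD precisely when it is pre-Krull with treed $t$-spectrum (Corollary~\ref{URD}), the whole statement reduces to the single claim that $\tspec(R)$ is treed if and only if $\tspec(D)$ is treed.

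To prove that claim I would first describe $\tspec(R)$. Each prime $P$ of $R$ either meets $S$ or not. If $P\cap S\neq\emptyset$, then by Lemma~\ref{composite} $P=p+XK[X]$ with $p=P\cap D$ a nonzero prime of $D$, and the same lemma gives $P_t=p_t+XK[X]$; hence such a $P$ is a $t$-prime exactly when $p$ is a $t$-prime of $D$, and $p+XK[X]\subseteq p'+XK[X]$ iff $p\subseteq p'$. If $P\cap S=\emptyset$, then $P=PK[X]\cap R$ corresponds to a prime of $K[X]=R_S$ (cf.\ \cite[pp.\ 426--427]{CMZ}); apart from $(0)$ these are of the form $Q_q=qK[X]\cap R$ with $q$ irreducible, they are pairwise incomparable, and each $p+XK[X]$ contains exactly one of them, namely $XK[X]$ (the case $q=X$). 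The decisive point is that for $q(0)\neq 0$ the prime $Q_q$ is not contained in any $p+XK[X]$: the element $q(X)/q(0)$ lies in $Q_q\cap R$ and has constant term $1\notin p$.

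Granting this description, the equivalence follows by comparing $t$-maximal ideals. For the easy half, if $p_1,p_2$ are incomparable $t$-primes of $D$ inside a $t$-maximal ideal $M$, then $p_1+XK[X]$ and $p_2+XK[X]$ are incomparable $t$-primes of $R$ inside the $t$-prime $M+XK[X]$, so $\tspec(R)$ treed forces $\tspec(D)$ treed. For the converse, assume $\tspec(D)$ treed and take two $t$-primes of $R$ lying in a common $t$-maximal ideal $\mathcal M$. Using the inclusions above: two $S$-meeting $t$-primes $p_i+XK[X]$ force $\mathcal M=M+XK[X]$ with both $p_i\subseteq M$, hence comparable; two primes $Q_{q_1},Q_{q_2}$ can share a common $t$-prime only when $q_1=q_2$; and a mixed pair is impossible because $Q_q$ with $q(0)\neq 0$ sits in no $p+XK[X]$, while $XK[X]$ lies below every $p+XK[X]$ and is therefore comparable to any $S$-meeting $t$-prime. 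Thus $\tspec(R)$ is treed.

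Assembling the pieces: if $D$ is a URD then $\tspec(D)$ is treed, so $\tspec(R)$ is treed, and since both $D$ and $K[X]$ are URDs Corollary~\ref{corollaryA} gives that $R$ is a URD; conversely, if $R$ is a URD then it is pre-Krull with treed $t$-spectrum (Corollary~\ref{URD}), whence $D$ is pre-Krull (Theorem~\ref{prekrull}) with treed $t$-spectrum, i.e.\ a URD. I expect the main obstacle to be the analysis of the primes $P$ with $P\cap S=\emptyset$---that is, checking that the ``uppers'' $Q_q$ neither nest among themselves nor drop below the $S$-meeting $t$-primes---since this is exactly what guarantees that forming $D+XK[X]$ introduces no branching in the $t$-spectrum beyond that already present in $\tspec(D)$.
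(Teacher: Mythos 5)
Your proposal is correct and follows essentially the same route as the paper, whose proof of this corollary is just the citation of Lemma \ref{composite}, Theorem \ref{prekrull} and Corollary \ref{corollaryA}; you have simply filled in the details those citations leave implicit, most notably the verification that $\tspec(D+XK[X])$ is treed if and only if $\tspec(D)$ is, via the description of the primes meeting or missing $S=D\setminus\{0\}$. Your observation that the contracted primes $qK[X]\cap R$ with $q(0)\neq 0$ lie under no $p+XK[X]$ is exactly the point that makes the specialization to $D_S=K$ work unconditionally.
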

\begin{proof} This follows Lemma
\ref{composite}, Theorem \ref{prekrull} and  Corollary
\ref{corollaryA}.
\end{proof}

The following is an example of $D$ and $D_{S}[X]$ both URD's without
$D^{(S)}$ being a URD.

\begin{ex}\label{nonURD} Let $V$ be a discrete rank two valuation domain with principal maximal ideal $m$. Set
$p=\bigcap m^{n}$ (such that $V_{p}$ is a DVR). Then $R=V+XV_{p}[X]$
is a non-URD pre-Krull domain with $V$ and $V_{p}[X]$ both URD's.

We show that $\tspec(R)$ is not treed. For this we note that the
maximal ideal $m+XV_{p}[X]$ of $R$ contains two prime ideals $
P_1=XV_{p}[X]$ and $P_2=pV_{p}[X]\cap R$. Both $P_1$ and
$P_2$ are $t$ -ideals because they are contractions of two
$t$-ideals of $V_{p}[X]=R_{V\backslash p}$. Since, in
$V_{p}[X],$ both $pV_{p}[X]$ and $XV_{p}[X]$ are distinct principal
prime ideals and hence incomparable, $P_1$ and $P_2$ are
incomparable.
\end{ex}

\begin{rem} \label{wellbURD} Example \ref{nonURD} shows that if $D$ and $D_{S}[X]$
are URD's,  $D^{(S)}$ is not in general a URD. We next give a
partial answer to when the converse is true.

Recall that
 a $t$-prime ideal $P$ of a domain $R$ is well-behaved
if $PR_P$ is a $t$-prime of $R_P$ and that $R$ is well-behaved if
each $t$-prime of $R$ is well-behaved \cite{Zwell}. Clearly, if $P$
is a well behaved $t$-prime such that $P\cap S = \emptyset$, then
$PD_S$ is a $t$-prime.

Now, suppose that $D^{(S)}$ is a URD. Since URD's are stable under quotients (Lemma \ref{quotient}), $D_{S}[X]=D^{(S)}_S$ is a URD.
We know that $D$ is pre-Krull (Theorem \ref{prekrull}). If moreover $D$ is a well
behaved domain, then $\tspec(D)$ is treed and so $D$ is a URD.
Indeed, let $M\in\tmax(D)$ and let $P, Q$ be two $t$-primes contained in $M$.
If $M\cap S=\emptyset$,
as $D$ is well behaved, then $MD_S$ is a
prime $t$-ideal and so are $PD_S,\, QD_S\sub MD_S$. Since $D_{S}[X]$ is a URD, $D_S$ is also a URD (Proposition \ref{polynomial}). Hence $PD_S,\, QD_S$ are comparable and so are $P$
and $Q$. If $M\cap S\neq\emptyset$ then $M+XD_S[X]$ is a $t$-ideal
by Lemma \ref{composite}. Now $P+XD_S[X]$ and $Q+XD_S[X]$ are prime
ideals contained in $M+XD_S[X]$ and because $\wspec(D^{(S)})$ is
treed (Remark \ref{wspectreed}(2)) we conclude that $P+XD_S[X]$ and
$Q+XD_S[X]$ are comparable. Hence $P$ and $Q$ are comparable. So
$\tspec(D)$  is treed.
\end{rem}

\begin{cor}\label{corollaryC} Let $D$ be a PVMD, $X$ an indeterminate over $D$ and $S$
 a multiplicative subset of $D$ such that $D^{(S)}$ is a PVMD. Then
$D^{(S)}$ is a URD if and only if $D$ is a URD.
\end{cor}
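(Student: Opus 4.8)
The statement asks to characterize when $D^{(S)}$ is a URD, under the standing assumption that both $D$ and $D^{(S)}$ are PVMDs. My plan is to reduce this to the general machinery already developed, namely Corollary \ref{corollaryA} together with Corollary \ref{corollaryC}'s hypotheses, and then to discharge the two auxiliary inputs that Corollary \ref{corollaryA} requires: that $D_S[X]$ is a URD and that $\tspec(D^{(S)})$ is treed. The overall structure is an ``if and only if'', but the two directions are quite asymmetric in difficulty.

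For the forward direction, suppose $D^{(S)}$ is a URD. Since URD's are stable under passage to rings of fractions (Lemma \ref{quotient}(2)) and $D_S[X]=D^{(S)}_S$, I get immediately that $D_S[X]$ is a URD; and then by Proposition \ref{polynomial}(1) applied to $D_S[X]=D_S[X]$ (viewing $X$ as the indeterminate), $D_S$ is a URD as well. To recover that $D$ itself is a URD, I would use Theorem \ref{prekrull} to see that $D$ is pre-Krull, so by Corollary \ref{URD} it remains only to show $\tspec(D)$ is treed. Here is where the PVMD hypothesis does real work: in a PVMD every $t$-prime is well-behaved (indeed $R_P$ is a valuation domain for each prime $t$-ideal $P$, by observation (I) of Section \ref{one}), so $D$ is a well-behaved domain, and the treedness argument spelled out in Remark \ref{wellbURD} applies verbatim to conclude that $\tspec(D)$ is treed. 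Thus $D$ is a URD.

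For the converse, suppose $D$ is a URD. Because $D$ is a PVMD, $D_S[X]$ is a PVMD (localization and polynomial extension preserve the PVMD property), and $D_S$ is a URD by Lemma \ref{quotient}(2); then by Corollary \ref{polyPVMD}, since $D_S$ is a PVMD and a URD, $D_S[X]$ is a PVMD and a URD. So both $D$ and $D_S[X]$ are URD's, and I may invoke Corollary \ref{corollaryA}: $D^{(S)}$ is a URD if and only if $\tspec(D^{(S)})$ is treed. Thus everything comes down to showing that the PVMD hypothesis on $D^{(S)}$ forces $\tspec(D^{(S)})$ to be treed. But this is automatic: by observation (II) of Section \ref{one}, the set of prime $t$-ideals of any PVMD is a tree under inclusion, so $\tspec(D^{(S)})$ is treed precisely because $D^{(S)}$ is assumed to be a PVMD. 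Hence $D^{(S)}$ is a URD.

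The main obstacle, and the only place where genuine care is needed, is the forward direction's recovery of treedness of $\tspec(D)$. Everything else is a bookkeeping assembly of stability results (Lemma \ref{quotient}, Theorem \ref{prekrull}, Corollary \ref{polyPVMD}, Corollary \ref{corollaryA}) plus the two structural facts (I) and (II) about PVMDs. The elegance of the argument is that the PVMD hypothesis on $D^{(S)}$ is used twice in dual roles: in the converse it supplies treedness of $\tspec(D^{(S)})$ directly via (II), while in the forward direction it supplies the well-behavedness of $D$ (via (I), since a PVMD localizes to valuation domains at $t$-primes and hence its $t$-primes are trivially well-behaved), which is exactly the hypothesis that Remark \ref{wellbURD} needs to push treedness down from $D^{(S)}$ to $D$.
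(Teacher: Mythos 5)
Your proposal is correct and follows essentially the same route as the paper: the forward direction uses the well-behavedness of the PVMD $D$ to invoke Remark \ref{wellbURD}, and the converse chains Corollary \ref{polyPVMD} with Corollary \ref{corollaryA}, using observation (II) to get treedness of $\tspec(D^{(S)})$ from the PVMD hypothesis. The only cosmetic difference is that you derive well-behavedness from observation (I) rather than citing \cite{Zwell}, which is a perfectly valid shortcut.
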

\begin{proof} Suppose that $D^{(S)}$ is a URD. Since a PVMD is a well behaved domain \cite{Zwell}, that D is a URD follows from Remark \ref{wellbURD}. Conversely, if $D$ is a URD,  $D_{S}[X]$ is a
 URD by Corollary \ref{polyPVMD}. Applying Corollary \ref{corollaryA} we have that $D^{(S)}$
a URD.
\end{proof}

\bigskip
We have seen in Proposition \ref{GK} that a generalized Krull domain
$D$ is a URD. Thus if $D$ is a generalized Krull domain and
$D^{(S)}$ is a PVMD, then $D^{(S)}$ is a URD. We now show that if
$D^{(S)}$ is a PVMD then it is actually a generalized Krull domain.

\begin{prop} Let $D$ be a generalized Krull domain and let $S$ be a
multiplicative set in $D$ such that $D^{(S)}$ is a PVMD. Then
$D^{(S)}$ is a generalized Krull domain.
\end{prop}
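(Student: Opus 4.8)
The plan is to verify the three defining properties of a generalized Krull domain directly for $R=D^{(S)}$. Recall that $R$ is a generalized Krull domain exactly when it is a PVMD that is \emph{strongly discrete} (i.e. $(P^{2})_{t}\neq P$ for every prime $t$-ideal $P$) and \emph{pre-Krull} (every nonzero principal ideal has only finitely many minimal primes). Since $R$ is a PVMD by hypothesis, only the last two properties need to be established, and the argument will follow the partition of $\Spec(R)$ into primes meeting $S$ and primes disjoint from $S$ that underlies Theorem \ref{prekrull}.

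For the pre-Krull property the idea is simply to invoke Theorem \ref{prekrull}, which reduces matters to showing that both $D$ and $D_{S}[X]$ are pre-Krull. The domain $D$ is pre-Krull because it is a generalized Krull domain. For $D_{S}[X]$ I would first note that $D_{S}$ is a localization of $D$, hence a PVMD and pre-Krull (Lemma \ref{quotient}(1)), so $D_{S}$ is a URD by Theorem \ref{Theorem C}. Because a PVMD is a UMT-domain, Theorem \ref{polynomial2} then yields that $D_{S}[X]$ is a URD, and in particular pre-Krull.

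The heart of the proof is strong discreteness. First I would show that $D_{S}[X]$ is strongly discrete, using that $D_{S}$ inherits strong discreteness from $D$ (the $t$-primes of $D_{S}$ are the $t$-primes of $D$ disjoint from $S$). A prime $t$-ideal $Q$ of $D_{S}[X]$ is either an upper to zero, in which case $D_{S}[X]_{Q}=K[X]_{(f)}$ is a DVR and so $Q$ is not idempotent, or it contracts to a nonzero $t$-prime $q$ of $D_{S}$, in which case $Q=q[X]$ (as $D_{S}$ is a UMT-domain) and $(Q^{2})_{t}=(q^{2}[X])_{t}=(q^{2})_{t}[X]\neq q[X]=Q$, since $(q^{2})_{t}\neq q$. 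Next, for a prime $t$-ideal $P$ of $R$ I would split into the two cases. If $P\cap S=\emptyset$, then $PD_{S}[X]$ is a $t$-prime of $D_{S}[X]=R_{S}$, and an equality $(P^{2})_{t}=P$ would localize to idempotency of $PD_{S}[X]$, contradicting the strong discreteness of $D_{S}[X]$. If $P\cap S\neq\emptyset$, then by Lemma \ref{composite} $P=p+XD_{S}[X]$ with $p=P\cap D$ a $t$-prime of $D$, and the same lemma applied to $p^{2}$ gives $(P^{2})_{t}=(p^{2})_{t}+XD_{S}[X]$; contracting to $D$ and using $(p^{2})_{t}\neq p$ (strong discreteness of $D$) forces $(P^{2})_{t}\neq P$.

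Combining these, $R=D^{(S)}$ is a strongly discrete, pre-Krull PVMD, i.e. a generalized Krull domain. I expect the main obstacle to be precisely the strong-discreteness step: one has to control the $t$-operation simultaneously under localization at $S$ and under the $D+XD_{S}[X]$ decomposition, and in particular to check carefully that uppers to zero of $D_{S}[X]$ are non-idempotent and that Lemma \ref{composite} delivers the identity $(P^{2})_{t}=(p^{2})_{t}+XD_{S}[X]$ for the primes meeting $S$. The pre-Krull part, by contrast, should follow routinely from Theorem \ref{prekrull} together with the URD machinery already developed.
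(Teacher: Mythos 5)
Your proof is correct and follows essentially the same route as the paper: the pre-Krull property is reduced to Theorem \ref{prekrull}, and strong discreteness is obtained by splitting the $t$-primes $P$ of $D^{(S)}$ according to whether $P\cap S=\emptyset$ (localize to $D_S[X]$ and use that it is strongly discrete) or $P\cap S\neq\emptyset$ (Lemma \ref{composite} gives $(P^2)_t=(p^2)_t+XD_S[X]$ and reduces the question to $(p^2)_t\neq p$ in $D$). The only difference is that the paper simply cites \cite[Theorem 4.1]{ElB1} for the fact that $D_S[X]$ is itself a generalized Krull domain (hence strongly discrete and pre-Krull), whereas you reprove this stability in-house via the URD machinery and a direct analysis of the $t$-primes of $D_S[X]$ (uppers to zero versus extended primes $q[X]$); both are legitimate.
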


\begin{proof} Let $P$ be a prime $t$-ideal of $D^{(S)}=D+XD_{S}[X]$ where
$D$ is a generalized Krull domain and $S$ is such that $D^{(S)}$ is
a PVMD. Let us assume that $D^{(S)}$ is not a generalized Krull
domain. Now every principal ideal of $D^{(S)}$ has finitely many
minimal primes (Theorem \ref{prekrull}); in fact both $D$ and $D_S[X]$ are generalized Krull domains \cite[Theorem 4.1]{ElB1} and a generalized Krull domain is pre-Krull \cite[Theorem 3.9]{ElB1}. So $D^{(S)}$ not being a generalized Krull domain
would require that there is a prime $t$-ideal $P$ such that
$(P^{2})_{t}=P$. If $P\cap S\neq \emptyset$ then
$P=p+XD_{S}[X]=pD^{(S)}$ for some $t$-prime ideal $p$ of $D$ with
 $p\cap S\neq \emptyset$ (Lemma \ref{composite}). Now $ P^{2}=p^{2}+XD_{S}[X]$ and
again by Lemma \ref{composite} $(P^{2})_{t}=(p^{2})_{t}+XD_{S}[X]$.
Now $ (p^{2})_{t}+XD_{S}[X]=p+XD_{S}[X]$ forces, by degree
considerations, $ (p^{2})_{t}=p$. But $p$ is a prime $t$-ideal of
$D$ which is assumed to be a generalized Krull domain,
contradiction. Hence $P$ does not intersect $S$ nontrivially. Next,
since $D^{(S)}$ is a PVMD for every nonzero ideal $A$ of $D^{(S)}$,
with $A\cap S=\emptyset$, we have
 $(A(D^{(S)})_{S})_{t}=A_{t}(D^{(S)})_{S}=A_{t}D_{S}[X]$.
Setting $A=P^{2}$, we get $(P^{2}(D^{(S)})_{S})_{t}=(P^{2})_{t}(D^{(S)})_{S}=(P^{2})_{t}D_{S}[X]=PD_{S}[X].
$ As $P^{2}(D^{(S)})_{S}=(P(D^{(S)})_{S})^{2}=(PD_{S}[X])^{2}$ we
have $ ((PD_{S}[X])^{2})_{t}=PD_{S}[X]$. But as $D_{S}[X]$ is a
generalized Krull domain \cite[Theorem 4.1]{ElB1} and $ PD_{S}[X]$ a prime $t$-ideal of
$D_{S}[X]$ this is impossible. Having exhausted all the cases we
conclude that $D^{(S)}$ is a generalized Krull domain.
\end{proof}

Since a Krull  domain is a (Ribenboim) generalized Krull domain, by
Corollary 2.7 of \cite{AAZ} for any multiplicative set $S$ of a
Krull domain $D$ we have that $D^{(S)}$ is a PVMD. Hence we have the following consequence.

\begin{cor}\label{corollary17} If $D$ is a Krull domain and $S$ is any
multiplicative set of $D$, then $D^{(S)}$ is a generalized Krull
domain, and hence a URD.
\end{cor}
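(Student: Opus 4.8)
The plan is to deduce the statement by combining two results proved immediately above: the preceding Proposition, which shows that if $D$ is a generalized Krull domain and $D^{(S)}$ is a PVMD then $D^{(S)}$ is again a generalized Krull domain, and Proposition \ref{GK}, which shows that every generalized Krull domain is a URD. Thus the whole argument reduces to verifying the two hypotheses of the preceding Proposition: that a Krull domain $D$ is a generalized Krull domain in the sense of this paper, and that $D^{(S)}$ is a PVMD for an arbitrary multiplicative set $S$.

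First I would check that a Krull domain $D$ is a generalized Krull domain in our sense, i.e. a strongly discrete PVMD in which every nonzero principal ideal has finitely many minimal primes. A Krull domain is a PVMD of $t$-dimension one with $t$-finite character, so every prime $t$-ideal $P$ is $t$-maximal of height one and $R_P$ is a DVR. Since $w=t$ in a PVMD \cite[Theorem 3.1]{Kang} and the $w$-operation localizes at $t$-maximal ideals, one gets $(P^2)_t R_P = P^2R_P \subsetneq PR_P$, so $(P^2)_t \neq P$; hence $D$ is strongly discrete. The $t$-finite character gives at once that each nonzero principal ideal lies in only finitely many $t$-maximal ideals and so has finitely many minimal primes. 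Therefore $D$ is a generalized Krull domain in our sense.

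Next I would secure the PVMD property of $D^{(S)}$. For this I would use the observation recorded just before the statement: a Krull domain is a (Ribenboim) generalized Krull domain, so by \cite[Corollary 2.7]{AAZ} the ring $D^{(S)}$ is a PVMD for every multiplicative set $S$ of $D$. With both hypotheses verified, the preceding Proposition yields that $D^{(S)}$ is a generalized Krull domain, and Proposition \ref{GK} then gives that $D^{(S)}$ is a URD, as claimed.

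The argument involves no genuine obstacle; the one point demanding care is the clash between the two inequivalent meanings of ``generalized Krull domain'' in play. The external input \cite[Corollary 2.7]{AAZ} is phrased in terms of the Ribenboim notion, whereas the preceding Proposition and Proposition \ref{GK} use the strongly discrete PVMD notion of this paper. Once one observes that a Krull domain meets both descriptions, the two threads combine and the conclusion follows immediately.
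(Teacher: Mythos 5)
Your proposal is correct and follows essentially the same route as the paper: the paper's justification is precisely the remark preceding the corollary (a Krull domain is a Ribenboim generalized Krull domain, so $D^{(S)}$ is a PVMD by \cite[Corollary 2.7]{AAZ}), combined with the immediately preceding Proposition and Proposition \ref{GK}. Your explicit check that a Krull domain is a generalized Krull domain in the paper's strongly-discrete-PVMD sense is a detail the paper leaves implicit, but it is the same argument.
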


\section*{Acknowledgments}

Part of this work was done while the first named author was visiting the Department of Mathematics
of  Universit\`a degli Studi  ``Roma Tre", supported by an INdAM visiting grant.

We are grateful to the referee for making a number of insightful suggestions which helped improve the presentation of the paper a great deal.

\end{document}